\numberwithin{equation}{section}
\def\op{\mathrm{op}}
\def\pdo{\mathrm{pdo}}
\def\v_j{\underline{ {v}^{j} }}
\def\R_j{\mathrm{R}^{j}}
\def\l_j{\widetilde{ \lambda^{ j } }}
\def\dsp{\displaystyle}
\title{\textbf{On M\'etivier's Lax-Mizohata theorem and extensions to weak defects of hyperbolicity. \\ Part one.}}
\date{\today}
\author{Karim Ndoumajoud\thanks{Institut de Math\'ematiques de Jussieu - Paris Rive Gauche UMR CNRS 7586, Sorbonne Universit\'e} \and Benjamin Texier\thanks{Universit\'e Claude Bernard Lyon 1, Institut Camille Jordan UMR CNRS 5208}}
\newtheorem{defin}{Definition}[section]
\newtheorem{lem}[defin]{Lemma}
\newtheorem{prop}[defin]{Proposition}
\newtheorem{theo}{Theorem}
\newtheorem{cor}[defin]{Corollary}
\newtheorem{ass}[defin]{Assumption}
\newtheorem{rem}[defin]{Remark}
\def\R{\mathbb R}
\def\C{\mathbb C}
\def\S{\mathbb S}
\def\N{\mathbb N} 
\def\op{{\rm op}}
\def\e{\varepsilon}
\def\s{\sigma}
\newcommand{\be}{\begin{equation}}
\newcommand{\ee}{\end{equation}}
\newcommand{\ba}{\begin{aligned}}
\newcommand{\ea}{\end{aligned}}
\renewcommand{\d}{\partial}
\def\a{\alpha}
\def\b{\beta}
\def\g{\gamma}
\def\l{\lambda}
\def\t{\tau}
\def\tr{\mbox{tr}\,}
\def\op{{\rm op}}
\def\o{\omega}
\begin{document}

\maketitle

\mathtoolsset{showonlyrefs}

\begin{abstract}
We prove that, for first-order, fully nonlinear systems of partial differential equations, under an hypothesis of ellipticity for the principal symbol, the Cauchy problem has no solution within a range of Sobolev indices depending on the regularity of the initial datum. This gives a new and greatly detailed proof of a result of G. M\'etivier [{\it Remarks on the Cauchy problem}, 2005]. We then extend this result to systems experiencing a transition from hyperbolicity and ellipticity, in the spirit of recent work by N. Lerner, Y. Morimoto, and C.-J. Xu, [{\it Instability of the Cauchy-Kovalevskaya solution for a class of non-linear systems}, 2010], and N. Lerner, T. Nguyen and B. Texier [{\it The onset of instability in first-order systems}, 2018].
\end{abstract}

\section{Introduction}
We study the fully nonlinear, order-one Cauchy problem in $\R^d:$ 
\begin{equation} \label{equation reference} \left\{ 
\begin{aligned}
\d_t u + F(t,x,u,\d_x u) & = 0, \\ 
u_{ |t = 0 } & = u_{in} \in H^{ \sigma }( \mathbb{ R }^{ d } )
\end{aligned}
\right.
\end{equation}
where $ t \geq 0$ and $d \geq 1.$ The map
$ F: \R \times \R^d \times \R^N \times \R^{Nd} \to \R^N$
is smooth in a neighborhood of some $(0,x^0,u^0,v^0),$ where $N \geq 1.$ The principal symbol is defined as
\be \label{def:A}
A(t,x,u,v,\xi) := \xi \cdot \d_4 F(t,x,u,v) =  \sum_{1 \leq j \leq d} \xi_j \d_{4_j} F(t,x,u,v),
\ee
where $\d_{4_j} F$ refers to the derivative of $F$ with respect to the $j$-th component $v_j$ of its fourth argument $(v_1,\dots,v_d) \in (\R^N)^d.$ 

Under an assumption of initial ellipticity (strong defect of hyperbolicity, see Assumption \ref{ass:ell} below) for $A(0,\o^0),$ for some $\o^0 = (x^0,u^0,v^0,\xi^0) \in \R^d \times \R^N \times \R^{Nd} \times \S^{d-1},$ we prove in Theorem \ref{th:main} that if $s > 1 + d/2,$ then no local-in-time $H^s$ solution exists to the Cauchy problem \eqref{equation reference}, even on arbitrarily small time intervals, for some data with a Sobolev degree of regularity $\s \in [s, 2 s -1 - d/2).$

Under an assumption of a transition from hyperbolicity to ellipticity (weak defect of hyperbolicity; see Assumption \ref{ass:bif} below) for $A(0,\o^0),$ for some $\o^0,$ we prove in Theorem \ref{th:2} that if $s > 3/2 + 3/4 + d/2,$ then no local-in-time $H^s$ solution exists to the Cauchy problem \eqref{equation reference}, even on arbitrarily small time intervals, for some data with a Sobolev degree of regularity $\s \in [s, 2 s - 3/2 - d/2).$

Our results recover and extend an elliptic theorem of G. M\'etivier \cite{Metivier}. The analysis of weak defects of hyperbolicity was pioneered in the article of N. Lerner, Y. Morimoto and C.-J. Xu \cite{LMX}. These authors proved that for complex scalar equations exhibiting a weak defect of hyperbolicity, no solutions could exist above a certain level of regularity. Following \cite{LMX}, strong Hadamard instabilities were proved by N. Lerner, T. Nguyen and B. Texier \cite{LNT} for systems exhibiting weak defects of hyperbolicity. Our second result (Theorem \ref{th:2}) extends Theorem 1.6 from \cite{LNT}: while the result of \cite{LNT}, under an assumption analogous to Assumption \ref{ass:bif}, proved a lack of regularity of an hypothetical flow, we prove that such a flow does not exist. The functional spaces considered in \cite{LNT} are however larger: the lack of H\"older continuity is from $W^{1,\infty}$ to $H^m,$ any $m \in \R,$ while here we restrict to sufficiently small Sobolev spaces.

\section{Assumptions and results}

\subsection{Elliptic initial-value problems}

Consider the ellipticity assumption:

\begin{ass} \label{ass:ell} For some $(x^0, u^0, v^0, \xi^0) \in \R^d \times \R^N \times \R^{Nd} \times \S^{d-1},$ the spectrum of the principal symbol $A(0,x^0,u^0,v^0,\xi^0)$ defined in \eqref{def:A} is not entirely contained in $\R.$
\end{ass}

The above ellipticity assumption implies non-existence of Sobolev solutions to the initial-value problem \eqref{equation reference}, in the following sense: 
\begin{theo}\label{th:main}
Under Assumption {\rm \ref{ass:ell},} for any $s > d/2 + 1,$ any $s \leq \s < 2 s - 1 - d/2,$ for some $u_{in} \in H^\sigma(\R^d),$ there is no $T > 0$ and no ball $B_{x^0}$ centered at $x^0$ such that the initial-value problem \eqref{equation reference} has a solution in $C^0([0,T], H^s(B_{x^0})).$  
\end{theo}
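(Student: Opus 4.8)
The plan is to argue by contradiction: assuming a solution $u \in C^0([0,T], H^s(B_{x^0}))$ exists for \emph{every} datum $u_{in} \in H^\sigma$ near $u^0$, we construct a family of data that produces a Hadamard-type instability incompatible with this bound. The mechanism is the classical Lax--Mizohata one, adapted to the quasilinear setting: ellipticity of $A(0,\omega^0)$ means there is a complex eigenvalue $\lambda_0 = a_0 + i b_0$ with $b_0 \neq 0$; after conjugating $\xi^0 \to -\xi^0$ if necessary we may take $b_0 > 0$, so the linearized symbol admits, in the high-frequency limit, a growing mode with rate $\sim b_0 |\xi|$.

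\textbf{Key steps.} First, linearize \eqref{equation reference} around the reference state and freeze coefficients at $(0,\omega^0)$: the principal part of the linearization is $\partial_t + A(0,x^0,u^0,v^0,\partial_x)$, a constant-coefficient first-order operator whose symbol has the complex eigenvalue $\lambda_0$. Second, build a \emph{geometric-optics / WKB ansatz} concentrated at frequency $n \xi^0$ for a large parameter $n$: take initial data of the form $u_{in} = u^0_{\mathrm{aff}}(x) + n^{-\sigma + (\text{something})} e^{i n x\cdot \xi^0}\, r\, \chi(x)$, where $u^0_{\mathrm{aff}}$ is an affine function realizing $(u^0,v^0)$ at $x^0$, $r$ is a right eigenvector for $\lambda_0$, and $\chi$ is a cutoff supported in $B_{x^0}$; the amplitude is tuned so that $\|u_{in} - u^0_{\mathrm{aff}}\|_{H^\sigma}$ stays bounded (or small) while $\|u_{in} - u^0_{\mathrm{aff}}\|_{H^s}$ is of size $n^{\sigma - s}$ times a gain. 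Third, run the approximate-solution construction: an exact solution of a nonlinear problem can be compared, on a time interval $[0,\tau_n]$ with $\tau_n \sim (\log n)/n$ or $\tau_n$ a small constant, to the amplified WKB profile; the growing exponential $e^{\tau_n b_0 n}$ beats any fixed power of $n$, so $\|u(\tau_n) - u^0_{\mathrm{aff}}\|_{H^s}$ is forced to blow up, contradicting membership in $C^0([0,T],H^s)$. The precise bookkeeping of exponents yields exactly the window $\sigma \in [s, 2s - 1 - d/2)$: the upper threshold $2s-1-d/2$ is where the nonlinear remainder (which lives in $H^{2s - 1 - d}$-type spaces by Moser/tame estimates, since $H^s$ with $s > d/2+1$ is an algebra under multiplication and stable under the nonlinearity $F$) can still be controlled relative to the $H^s$ norm of the leading term.

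\textbf{Main obstacle.} The delicate point is \emph{not} the linear instability — that is standard — but controlling the \emph{nonlinear} interaction of the high-frequency oscillatory datum with itself and with the background, and doing so with a \emph{quantified} loss that produces the sharp exponent $2s - 1 - d/2$ rather than a lossy one. Concretely: the datum has amplitude $\sim n^{-\sigma}$ (times logarithmic or cutoff factors) at frequency $n$, so products of two such terms sit at frequency up to $2n$ with amplitude $\sim n^{-2\sigma}$, contributing to the $H^s$ norm at size $n^{s-2\sigma} \cdot n^{\text{(gain)}}$; for this to be dominated by the linear growth $n^{-\sigma} e^{\tau_n b_0 n}$ one needs $\sigma < 2\sigma - s + \text{lower order}$, i.e. $\sigma > s$, and the sharp cutoff $\sigma < 2s-1-d/2$ comes from balancing against the time-integration and the $H^s \hookrightarrow W^{1,\infty}$ embedding (costing $d/2$) together with the one derivative in the first-order operator (costing $1$). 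Executing this requires a careful energy estimate for the difference between the true solution and a corrected WKB approximation, uniform in $n$, on the shrinking time interval, presumably via a Gronwall argument in $H^s$ using the tame nonlinear estimates; this is where the bulk of the technical work lies, and where the stated regularity threshold $s > d/2 + 1$ is used to keep $H^s$ an algebra and to close the estimates.
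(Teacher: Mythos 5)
Your proposal has a genuine structural gap, and it concerns the very logical shape of the statement. The theorem asserts non-existence of a solution for a \emph{single} datum $u_{in}\in H^\sigma$: to prove it by contradiction you must fix one datum, assume one solution on one interval $[0,T]$, and contradict that. Your scheme instead uses a \emph{family} of data indexed by the frequency parameter $n$, each with its own hypothetical solution. For each $n$ the contradiction hypothesis only gives existence on some interval $[0,T_n]$ with $T_n>0$ possibly shrinking faster than your observation time $\tau_n\sim(\log n)/n$, and each solution carries its own $C^0([0,T_n],H^s)$ bound; nothing forces these to be uniform in $n$. So the exponential amplification at time $\tau_n$ contradicts nothing — at best you recover a Hadamard-type instability / failure of continuity of the flow, in the spirit of Lerner--Nguyen--Texier, not non-existence. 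The paper resolves exactly this point by building one datum containing oscillations at \emph{all} dyadic frequencies simultaneously, $w_{in}=\sum_{j\ge 0} 2^{-j\sigma}(1+j)^{-1}e^{i x\cdot 2^j\xi^0}w$ (plus a polarization correction chosen so that the elliptic component of the datum is nonzero, which is itself a nontrivial step your ansatz with a ``right eigenvector $r$'' glosses over when $P_E u^0=0$); then $T$ and the solution are fixed once and for all, and the frequency index $j$ is sent to infinity against that fixed solution.

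The second gap is the comparison step. You propose to control the difference between the true solution and a corrected WKB profile by a Gronwall argument in $H^s$ with tame estimates. But the linearization along the putative solution is precisely the elliptic operator whose Cauchy problem is ill-posed: there is no $H^s$ energy estimate to close such a Gronwall loop, and the coefficients $A(t,x,u,\partial_x u)$ are only H\"older in $x$ (since $\partial_x u\in H^{s-1}\hookrightarrow C^{0,\theta}$) and merely continuous in $t$, so even the construction of WKB correctors is problematic. The paper's route is different: it paralinearizes the equation \emph{around the putative solution itself}, rescales semiclassically, localizes microlocally near $(x^0,\xi^0)$, splits the system by the spectral projectors onto the elliptic and hyperbolic invariant subspaces of the (H\"older-regular) symbol, and obtains upper and lower growth rates for the resulting order-zero paradifferential flows via G\r{a}rding's inequality; the contradiction then comes from a backward-in-time Duhamel representation pitted against a lower bound on the elliptic component of the datum. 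Finally, the threshold $\sigma<2s-1-d/2$ does not come from quadratic self-interaction of the oscillatory datum as you suggest, but from Bony's paralinearization remainder, which lies in $H^{2(s-1)-d/2}$ and after rescaling is of size $2^{-j(2s-1-d/2)}$ — this is the term the datum's amplitude $2^{-j\sigma}$ must beat.
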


 Theorem {\rm \ref{th:main}} recovers the ellipticity theorem of M\'etivier (Theorem 4.2 \cite{Metivier}), which extends the classical linear ellipticity theorems of Lax \cite{Lax} and Mizohata \cite{Mi}. The merit of our proof might be to clarify M\'etivier's and prepare for extensions to weak defects of hyperbolicity (see Theorem \ref{th:2} below), and to Gevrey spaces in the spirit of the work of B. Morisse \cite{Mo1,Mo2,Mo3}. The proof of Theorem \ref{th:main} is given in Section \ref{sec:proof:elliptic}.

\subsection{Transition to ellipticity} 

{\it Weak defects of hyperbolicity} were first explored for scalar equations in \cite{LMX}. The results of \cite{LMX} were extended to quasi-linear systems in \cite{LNT}. Roughly speaking, an assumption of a weak defect of hyperbolicity consists in relaxing Assumption \ref{ass:ell} into an assumption of initial hyperbolicity that is instantaneously lost: the principal symbol is hyperbolic at $t = 0$ but not for any $t > 0.$ This kind of assumption was given precise formulations for systems in \cite{LNT}, and shown to induce strong Hadamard instabilities in initial-value problems for quasi-linear systems.

Introduce $P$ the characteristic polynomial of $A$ evaluated along a solution $u$ of \eqref{equation reference}: 
\be \label{def:char}
 P(t,x,\xi,\l) = \mbox{det}\, (A(t,x, u(t,x), \d_x u(t,x),\xi) - \l {\rm Id}\big).
\ee
Via the equation \eqref{equation reference}, at $t = 0$ the characteristic polynomial $P$ and its partial derivatives  can all be expressed in terms of the initial datum. Denoting $v_{in} = (u_{in}, \d_x u_{in}),$ and $A_{in}(x,\xi) = A(0,x,v_{in}(x),\xi),$ $G_{in}(x) = F(0, x, v_{in}(x)),$ we see, in particular, that $P(0,x,\xi,\l)$ is equal to $\mbox{det}\, \big(A_{in}(x,\xi) - \l {\rm Id}\big),$ and $\d_t P(0,x,\xi,\l)$ to 
\be \label{dtP} \begin{aligned} \d_t P(0,x,\xi,\l)  = \mbox{det}'  \big( A_{in} (x,\xi) & - \l {\rm Id}\big) \\ & \cdot \Big( \d_1 A(0,x, v_{in}(x), \xi) + \d_3 A(0,x, v_{in}(x),\xi) G_{in}(x) \\ & \quad + \d_4 A(0,x, v_{in}(x),\xi) \cdot \d_x G_{in}(x) \Big) . \end{aligned}\ee 

The transition from hyperbolicity to ellipticity is formulated in \cite{LNT} in terms of the jet of $P$ (the first terms in its Taylor expansion) at $t = 0.$ These conditions thus bear on the datum, and involve the nonlinear terms in the equation.

We formulate here an assumption that is analogous to Hypothesis 1.5 in \cite{LNT}, and describes a time-differentiable branching of eigenvalues of $A$ from the hyperbolic to the elliptic region. 
Given $U$ an open set in $\R^d \times \S^{d-1},$ denote ${\mathcal S}_U$ the spectrum at $t = 0$ above $U:$ 
$$ {\mathcal S}_U := \big\{ (x,\xi,\l) \in U \times \C, \quad P(0,x,\xi,\l) = 0 \big\}.$$

\begin{ass} \label{ass:bif} For the principal symbol $A$ \eqref{def:A} we assume the following conditions:
\begin{itemize}
\item[{\rm (i)}] {\rm initial hyperbolicity:} for some open set $U,$ we have $\o = (x,\xi,\l) \in {\mathcal S}_U$ only if $\l \in \R.$
\item[{\rm (ii)}] {\rm Smooth diagonalizability:} for some $(x^0,\xi^0) \in U,$ the matrix $A(0,x,\xi)$ is smoothly diagonalizable for all $(x,\xi)$ near $(x^0,\xi^0)$ in $U.$ 
\item[{\rm (iii)}] {\rm Existence of double eigenvalues:} for some $\o^0 = (x^0,\xi^0,\l^0) \in {\mathcal S}_U,$ we have $\d_\l P(0,\o^0) = 0.$ 
\item[{\rm (iv)}] {\rm $C^1$ bifurcations of the spectrum away from $\R:$} given any $\l \in \R$ such that $\o = (x^0,\xi^0,\l) \in {\mathcal S}_U$ and $\d_\l P(0,\o) = 0:$ we have $\d_t P(0,\o) = 0$ and 
 \be \label{ineg:bif} \big (\d_{t\l}^2 P(0, \o) \big)^2 < (\d_t^2 P \d_\l^2 P)(0,\o),\ee
  and $\d_t P(0,\o') = 0,$ $\d_\l P(0, \o') = 0$ for all $\o'$ near $\o$ in ${\mathcal S}_U.$   
 \end{itemize}
\end{ass} 

In condition (ii) above, by ``smooth'' we mean ``as smooth as $A_{|t =0}$'', which by assumption on $F$ means ``as smooth as the datum''.  

We describe in Section \ref{sec:the:spectral:picture} how Assumption \ref{ass:bif} paints a picture of the spectrum of $A$ evaluated along a putative solution $u$ of \eqref{equation reference} for very small time: at $t$ goes from $0$ to positive values, some real eigenvalues of $A$ stay real, while pairs of eigenvalues branch out of the real axis into the rest of the complex plane, in a $C^1$ fashion, so that the real parts of the branching eigenvalues is $O(t).$

\begin{theo} \label{th:2} 
Under Assumption {\rm \ref{ass:bif},} for any $s > 3/2 + 3/4 + d/2,$ any $\s$ such that $3 + d/2 \leq \s < 2 s - 3/2 - d/2,$ for some $u_{in} \in H^\sigma(\R^d),$ there is no $T > 0$ and no ball $B_{x^0}$ centered at $x^0$ such that the initial-value problem \eqref{equation reference} has a solution in $C^0([0,T], H^s(B_{x^0})).$ 
\end{theo}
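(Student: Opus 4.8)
The plan is to reduce Theorem~\ref{th:2} to Theorem~\ref{th:main} by a localization-and-rescaling argument, combined with a semiclassical construction of high-frequency approximate solutions (WKB/geometric optics) adapted to the $C^1$ branching described by Assumption~\ref{ass:bif}. The starting point is the dichotomy: either a putative $C^0([0,T],H^s(B_{x^0}))$ solution $u$ of \eqref{equation reference} exists, or it does not. Assuming it does, the key is to examine the characteristic polynomial $P(t,x,\xi,\l)$ in \eqref{def:char} near $\o^0=(x^0,\xi^0,\l^0)$. By condition~(iii), $\l^0$ is a double root; by condition~(iv), $\d_t P(0,\o^0)=0$, so the branching is governed by the second-order jet, and the strict inequality \eqref{ineg:bif} is exactly the discriminant condition forcing the two branching roots $\l_\pm(t,x,\xi)$ to leave the real axis with nonzero imaginary part $\sim c\sqrt{-\text{disc}}\cdot$ — more precisely, expanding $P$ to second order in $(t,\l-\l^0)$ and completing the square, \eqref{ineg:bif} guarantees $\mathrm{Im}\,\l_\pm(t,x^0,\xi^0)\ne 0$ for $t>0$ small, with $|\mathrm{Im}\,\l_\pm| = O(t)$ and $|\mathrm{Re}\,\l_\pm - \l^0| = O(t)$. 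So for any fixed small $t_0>0$ the symbol $A(t_0,\cdot)$ evaluated along $u$ is \emph{elliptic} (spectrum not in $\R$) in the sense of Assumption~\ref{ass:ell}, at the point $(x^0,\xi^0)$.

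Next I would set up the semiclassical machinery. Freeze $u$ and linearize \eqref{equation reference} around it, obtaining a linear system $\d_t v + A(t,x,u,\d_x u,D)v + (\text{lower order})v = 0$, with a genuine gap in $\mathrm{Im}\,\mathrm{spec}$ of the principal symbol for $t$ in a subinterval $[t_0,t_1]$. One then constructs, for a large parameter $n$, an approximate solution of the form $v_n(t,x) = e^{i n \varphi(t,x)} a_n(t,x)$ concentrated at frequency $n\xi^0$ near $x^0$, whose amplitude grows like $e^{n \delta t}$ over $[t_0,t_1]$ for some $\delta>0$ coming from the spectral gap (this is the standard Lax--Mizohata / Métivier amplification). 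The nonlinear step is the delicate one: one must verify that $v_n$ remains a good approximate solution of the \emph{nonlinear} equation, i.e. that the quadratic-and-higher remainder $F(t,x,u+v_n,\d_x(u+v_n)) - F(t,x,u,\d_x u) - \d_{(u,v)}F\cdot(v_n,\d_x v_n)$ is negligible compared to the linear amplification; this is where the Sobolev thresholds $s>3/2+3/4+d/2$ and $\sigma<2s-3/2-d/2$ enter, controlling $\|v_n\|_{H^s}$ (hence $\|u+v_n\|$ staying in the domain where $F$ is smooth) against the gain in low norms, exactly as in the proof of Theorem~\ref{th:main} but with the extra loss $3/4$ absorbing the $\sqrt{\cdot}$-type degeneracy of the branching (imaginary part only $O(t)$ rather than $O(1)$, so the amplification rate over the initial layer $[0,t_0]$ is weaker and one must integrate the growth $\int_0^{t} \mathrm{Im}\,\l_\pm \,ds \sim t^2$). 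Running this construction backward from a would-be solution, one contradicts the $H^s$ bound on $u$: the solution operator cannot be bounded $H^\sigma \to C^0([0,T],H^s)$, and by a Baire-category / quantitative argument one produces a single datum $u_{in}\in H^\sigma$ for which no solution exists.

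Concretely the steps, in order, are: (1) express $P(0,\cdot)$, $\d_t P(0,\cdot)$, $\d_t^2 P(0,\cdot)$ via \eqref{dtP} and its $t$-derivative, purely in terms of $v_{in}=(u_{in},\d_x u_{in})$ and the nonlinear data $G_{in}$, and read \eqref{ineg:bif} as a strict discriminant inequality on the datum; (2) show, by the implicit function theorem applied to $P(t,x,\xi,\cdot)=0$ near the double root, that two eigenvalues $\l_\pm(t,x,\xi)$ of $A$-along-$u$ branch off $\R$ for $t\in(0,t_*]$ with $\mathrm{Im}\,\l_\pm(t,x^0,\xi^0)\asymp t$ — here one uses condition~(ii) to diagonalize the complementary (real, semisimple) part of the spectrum cleanly; (3) fix $t_0\in(0,t_*)$ and apply (the proof of) Theorem~\ref{th:main} on the time interval $[t_0, t_0+T']$ with the elliptic point $(x^0,\xi^0,\l_+(t_0,x^0,\xi^0))$, with the caveat that the amplification accumulated on $[0,t_0]$ is only of order $e^{c n t_0^2}$; (4) bookkeep the Sobolev indices so that the nonlinear remainder is controlled — this is where $3/2+3/4$ replaces the $1$ of Theorem~\ref{th:main}; (5) conclude non-existence for a specific datum by the same argument as in Section~\ref{sec:proof:elliptic}.

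The main obstacle I anticipate is step~(4) together with the transition from $[0,t_0]$ to $[t_0,t_1]$: because the imaginary part of the branching eigenvalue vanishes to first order at $t=0$, the instability is weaker than in the purely elliptic case, so one must very carefully track how much amplitude can be built on the degenerate initial layer and how much regularity of $u$ (only $H^s$, not $H^\infty$) is available to feed the nonlinear iteration — the extra $3/4$ in the threshold is precisely the margin needed, and getting that constant right, rather than merely "some loss", is the crux. A secondary difficulty is ensuring the geometric-optics phase $\varphi$ and amplitude $a_n$ can be constructed with only finitely many derivatives of the (low-regularity) coefficients, which forces a paraproduct decomposition of $A(t,x,u,\d_x u,\xi)$ into a smooth symbol plus a rough error treated perturbatively.
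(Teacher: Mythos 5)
Your step (2) — the branching of a pair of eigenvalues off the real axis with $\mathrm{Im}\,\l_\pm \asymp t$, deduced from \eqref{ineg:bif} as a discriminant condition — is correct and is exactly the paper's Lemma \ref{lem:bif}. But the core of your plan has a genuine gap. Step (3), ``fix $t_0>0$ and apply (the proof of) Theorem \ref{th:main} on $[t_0,t_0+T']$'', cannot work as a reduction: Theorem \ref{th:main} asserts non-existence for \emph{some specially constructed} datum, whereas at time $t_0$ the datum is $u(t_0)$, which is imposed by the putative solution and trivially does admit an $H^s$ solution on $[t_0,T]$, namely $u$ itself. To run the quantitative version of the elliptic argument starting at $t_0$ you would need a lower bound on the frequency-$2^j$, unstably polarized component of $u(t_0)$, and no such bound is available since the evolution on $[0,t_0]$ is not under your control (it could a priori damp precisely those components). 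The paper avoids this by never restarting: the prover-chosen oscillatory datum sits at $t=0$ (where the lower bound of Corollary \ref{cor:datum} holds), the time variable is rescaled by $2^{-j/2}$ rather than $2^{-j}$ so that the $O(t)$ imaginary parts yield a growth $e^{c t^2}$ which becomes decisive at the observation time $t\simeq \sqrt j$, and the contradiction is read off the backward-in-time representation \eqref{w:m} after diagonalizing the localized paradifferential system via the $C^1$ eigenvalues and eigenvectors (Lemmas \ref{lem:bif}, \ref{lem:evectors}) and the G\r{a}rding-based two-sided bounds of Lemma \ref{lem:S:trans}. The exponent $2s-3/2-d/2$ and hence the threshold $s>3/2+3/4+d/2$ come out of this bookkeeping (remainder $2^{-j(2s-3/2-d/2)}e^{t^2(\zeta_+-\zeta_-)/2}$ against the datum $2^{-j\s}(1+j)^{-1}$), not from an amplification budget on an initial layer.

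The second gap is your endgame. Constructing WKB approximate solutions with growth and concluding that the solution operator ``cannot be bounded $H^\s\to C^0([0,T],H^s)$'', then invoking a Baire-category argument, proves at best a lack of continuity or regularity of a hypothetical flow — this is precisely the result of \cite{LNT} that Theorem \ref{th:2} is meant to strengthen into genuine non-existence for a single datum. Non-existence requires an exact identity tying the datum to the putative solution (here the iterated Duhamel representation in forward time, inverted by the backward elliptic solution operator), not an approximate solution plus a uniform-boundedness contradiction. Moreover, with coefficients only H\"older in $x$ (since $F$ depends on $\d_x u\in H^{s-1}$), the phase/amplitude construction you sketch is not what the paper does at all: it works with exact paradifferential flows and G\r{a}rding inequalities for rough symbols, which is what makes the low-regularity hypothesis $s$ close to $1+d/2$ (here $2+d/2$) tractable.
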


Examples of systems and data satisfying Assumption \ref{ass:bif} are given in \cite{LNT}. Those include in particular systems of coupled Burgers equations.

In Assumption \ref{ass:bif} the instability is weaker than in Assumption \ref{ass:ell}. As a consequence, the instability is slower to develop, and the condition on $\s - s$ in Theorem \ref{th:2} is stronger than the one in Theorem \ref{th:main}. 

Condition $\s > 3 + d/2$ is required in order to make sense of two time derivatives (corresponding, via the equation, to three spatial derivatives) at $t = 0,$ which we need in Assumption \ref{ass:bif}(iv). For the proof to go through we only need $s > 2 + d/2,$ but then the conditions bearing on $\s$ impose that $s$ is actually greater than $3/2 + 3/4 + d/2.$  

The proof of Theorem \ref{th:2} is given in Section \ref{sec:weak}.

\subsection{Remarks} \label{sec:remarks}

In the companion paper \cite{NT2}, we get rid of the upper bound on $\sigma,$ that is prove non-existence of Sobolev solutions above a certain level of regularity {\it no matter how regular the datum is,} under an extra assumption of semi-simplicity and constant multiplicity for the purely imaginary eigenvalues of the principal symbol. We do so in the elliptic case and in the case of a transition to ellipticity. This extends Theorem 4.4 in \cite{Metivier}.

  A lengthy appendix (pages \pageref{proof:end} to \pageref{page:endofapp}) gives some standard and  not-so-standard results pertaining to pseudo- and para-differential operators. In the course of the analysis, we indeed need an estimate for the action on Sobolev maps of the difference between a pseudo-differential operator in semiclassical quantization and its para-differential counterpart. This issue was examined earlier in \cite{Lannes} and \cite{em3}. We correct here small errors from \cite{Lannes} and \cite{em3}; see Proposition \ref{prop:lannes}. We also show how G\r{a}rding's inequality translates into rates of growth for the flows of para-differential operators of order zero, in semiclassical quantization, for symbols with H\"older spatial regularity (see Proposition \ref{lem:rates}).

Note finally that the condition $s > 1 + d/2$ is critical, in the following sense: in the favorable case of symmetric hyperbolic systems, well-posedness is known only under that condition on the regularity of the datum; moreover, for the related (but nonlocal, hence distinct) incompressible Euler equations in two dimensions, well-posedness fails to hold in $H^2$ \cite{BL}. Here the assumption $s > 1 + d/2$ means that the posited solution $u$ of \eqref{equation reference} (the existence of which we endeavor to disprove) does not have a high level of pointwise regularity. Since $F$ depends on $(u, \d_x u),$ the symbols are only H\"older continuous in space. In particular, this forces us to rely on G\r{a}rding's inequality in order to derive bounds for the flows of relevant order-zero para-differential operators, as opposed to the high-regularity approach of \cite{Duh,em4}.

\section{Ellipticity: proof of Theorem \ref{th:main}} \label{sec:proof:elliptic}

\label{proof:begin} We work by contradiction and {\it assume} the existence of $T > 0,$ a ball $B_{x_0}$ centered at $x_0$ and $u \in C^0([0,T],H^s(B_{x_0}))$ which solves the initial-value problem \eqref{equation reference}. The contradiction will eventually come from the high but finite regularity of the datum $u_{in}.$

Our assumption puts forward a distinguished point $(x^0,u^0,v^0,\xi^0) \in \R^d \times \R^N \times \R^{Nd} \times \S^{d-1}.$ We will choose the datum $u_{in}$ (see Section \ref{sec:datum}), so that $u_{in}(x^0) \simeq u^0,$ and $\d_x u_{in}(x^0) = v^0.$ 

We identify the solution $u$ with its extension to the whole space $\R^d$ via a linear bounded extension operator $H^s(B_{x_0}) \to H^s(\R^d).$

\subsection{Hyperbolic rescaling} \label{sec:hyp}

Define the spatial rescaling map: 
\be \label{def:hj}
h_j: \quad f \in H^s \to (x \to f(2^{-j} x)) \in H^s,
\ee
for some $j$ that will eventually be chosen large, and let
\be \label{def:tildeu} \tilde u(t,x) := u( 2^{-j} t,x), \qquad u_{h} := h_j \tilde u,\ee
where $u$ is the posited solution. Then, $u_h$ belongs to $C^0([0, 2^{j}  T], H^s(\R^d)),$ and solves
the initial-value problem
\be \label{ivp:rescaled} \left\{\begin{aligned}
 \d_t u_h + 2^{-j} F(2^{-j} t, 2^{-j} x, u_h , 2^j \d_x u_h) & = 0,\\
 u_h(0,x) = u_{in}(2^{-j} x).
 \end{aligned}\right.
\ee  

\subsection{Paralinearization} \label{sec:para}

Our second step is to para-linearize the system. Since $u_h$ belongs to $H^s$ for $t \in [0, 2^j T],$ we have (see for instance Theorem 5.2.4 in \cite{Metivierbis}):
\be \label{para}
 F(2^{-j} t, 2^{-j} x, u_h, 2^j \d_x u_h) = T_{\d_3 F(u_h ,  2^{j} \d_x u_h)} u_h + T_{\d_4 F(u_h , 2^j \d_x u_h)} 2^j \d_x u_h + R_j^{para},
\ee
where $T_a b$ refers to the para-product of $b$ by $a,$ and $\d_3 F$ refers to the partial derivative of $F$ with respect to its third ($u$) argument. The remainder satisfies 
\be \label{R:para}
 R_j^{para} \in L^\infty([0, 2^j  T], H^{2 (s -1) - d/2}(\R^d)).
\ee
The proof of Theorem 5.2.4 in \cite{Metivierbis} shows moreover that
\be \label{R:est}
 \| R_j^{para}\|_{H^{2(s-1) - d/2}(\R^d)} \lesssim C_F(\| u_h, 2^j \d_x u_h \|_{L^\infty(\R^d)}) \| (u_h, 2^j \d_x u_h) \|_{H^{s-1}(\R^d)},
\ee
where $C_F: \R_+ \to \R_+$ is nondecreasing and depends only on $F.$ In \eqref{para}, we omitted the arguments $(2^{-j} t, 2^{-j} x)$ of $\d_3 F$ and $\d_4 F$ in the right-hand side.

\subsection{Semiclassical quantization} \label{sec:semicl}

Given a symbol $a,$ our preferred notation for the para-differential operator with symbol $a$ is $\op(a):$ 
\be \label{notation:op} T_a  = \op(a),\ee
where the spatial convolution with the partial inverse Fourier transform of some admissible cut-off function is implicit. For details on our notation, and classical results pertaining to para-differential operators, see Appendix \ref{sec:symb}.

By definition, the para-differential operator with symbol $a$ in $2^{-j}$-semiclassical quantization is 
$$ \op_j(a) := h_j^{-1} \op(h_j a) h_j,$$
where $h_j$ is defined in \eqref{def:hj}.
Classical results in semiclassical quantization are recalled in Appendix \ref{sec:symb}. 
We remark that, using notation \eqref{notation:op}, we can write \eqref{para} in the form
\be \label{para2}
 2^{-j} h_j F(\tilde u, \d_x \tilde u) = 2^{-j} \op\big( h_j \d_3 F(\tilde u,\d_x \tilde u) \big) h_j \tilde u + \op\big( h_j \d_4 F(\tilde u, \d_x \tilde u) \big) \d_x (h_j \tilde u) + 2^{-j} R^{para}_j.
 \ee
That is, 
\be \label{para5} \begin{aligned}
2^{-j} F(\tilde u,\d_x \tilde u) & = h_j^{-1} \op\Big( 2^{-j} h_j \d_3 F(\tilde u, \d_x \tilde u) \Big) h_j \tilde u +  h_j^{-1} \op\Big( h_j  \d_4 F(\tilde u, \d_x \tilde u) \cdot i \xi \Big) h_j \tilde u \\ & + 2^{-j} h_j^{-1} R^{para}_j. \end{aligned} 
 \ee
Thus $\tilde u$ solves the initial-value problem 
\be \label{ivp:tildeu} \left\{\begin{aligned}
 \d_t \tilde u + \op_j(\tilde A) \tilde u & = \tilde R^{para}_j,
 \\ 
 \tilde u(0) & = u_{in}, \end{aligned}\right.
 \ee 
where $\tilde R^{para}_j$ is 
\be \label{def:tildeR}
 \tilde R^{para}_j := - 2^{-j} h_j^{-1} R^{para}_j,
\ee
 and the symbol $\tilde A$ is defined by 
\be \label{def:tildeA} \begin{aligned} 
\tilde A(t, x, \tilde u, \d_x \tilde u, \xi) :=  2^{-j}  \d_3 F(2^{-j} t, x, \tilde u, \d_x \tilde u) + \d_4 F(2^{-j} t, x, \tilde u, \d_x \tilde u) \cdot i \xi,
\end{aligned}
\ee
where $\tilde u$ is evaluated at $(t,x).$ Recall that $j$ will be chosen to be large. Thus the symbol $\tilde A$ appears as a small perturbation of the principal symbol $i A,$ with $A$ defined in \eqref{def:A}.

\subsection{Localization near the distinguished point in the cotangent space} \label{sec:loc:psi}

We use a Littlewood-Paley dyadic decomposition $(\phi_j)_{j \geq 0}.$ That is, a smooth frequency truncation function $\phi_0$ is given, with $0 \leq \phi_0 \leq 1,$ and $\phi_0(\xi) \equiv 1$ for $|\xi| \leq \e_1$ and $\phi_0(\xi) \equiv 0$ for $|\xi| \geq \e_2,$ for some real numbers $\e_1, \e_2$ satisfying $0 < \e_1 < \e_2 < 1 < 2\e_1.$ We let \be \label{def:phij} \phi_j(\xi) := \phi_0(2^{-j} \xi) - \phi_0(2^{-j + 1} \xi), \qquad \mbox{for $j \geq 1.$}\ee
In particular, the cut-off $\phi_1$ is supported in the neighborhood $\{ \e_1 \leq |\xi| \leq 2 \e_2 \}$ of the unit disk; the cut-off $\phi_k$ is supported in the annulus $\{ 2^k \e_1 \leq |\xi| \leq 2^{1 + k} \e_2 \}.$  We have the decomposition   
\be \label{LP} f = \sum_{j \geq 0} \phi_j(D) f,
\ee 
where the operator $\phi_j(D)$ is the Fourier multiplier with symbol $\phi_j(\xi).$

We localize in $(x,\xi)$ near $(x^0,\xi^0),$ within the annulus $|\xi| \simeq 2^{-j},$ for some integer $j$ which will eventually be chosen large. In this view, we let $\psi$ be a smooth space-frequency cut-off supported in $(x^0,\xi^0) \in \R^d_x \times \R^d_\xi:$ for some $\delta > 0,$ we have
 \be \label{def:psi}
 \psi(x,\xi) = \left\{\begin{aligned} 1, & \quad |x - x^0| + |\xi - \xi^0| \leq \delta/2, \\
 0, & \quad |x - x^0| + |\xi - \xi^0| \geq \delta.
 \end{aligned}\right.
 \ee
  We may choose $\psi$ to be a tensor product $\psi(x,\xi) = \psi_1(x) \psi_2(\xi).$ 
We also let 
\be \label{def:v}
v := \op_j(\psi) \tilde u
\ee
where $\tilde u$ is defined in \eqref{def:tildeu} and was found in Section \ref{sec:semicl} to solve the initial-value problem \eqref{ivp:tildeu}.
The initial-value problem in $v$ is
\be \label{ivp:v}\left\{\begin{aligned}
 \d_t v + \op_j(\tilde A) v & = f , \\ 
 v(0) & = \op_j(\psi) u_{in},
\end{aligned}\right.\ee
where
\be \label{def:f}
 f = - [\op_j(\psi), \op_j(\tilde A)] \tilde u + \op_j(\psi) \tilde R^{para}_j.
\ee
We observe that the operator in \eqref{ivp:v} is further localized, up to a remainder. Indeed, we have
 $$ \op_j(\tilde A) v = \op_j(\psi^\sharp \tilde A) v - f_1,$$
 where
 \be \label{def:f1} f_1 := \op_j(\tilde A) \big(\op_j(\psi^\sharp) \op_j(\psi) - \op_j(\psi)\big) \tilde u + (\op_j(\psi^\sharp \tilde A) - \op_j(\tilde A) \op_j(\psi^\sharp)\big) v,\ee
 and $f_1$ will be seen to be appropriately sense. 

Above, we used notation $\chi \prec \chi^\sharp,$ borrowed from \cite{LNT}: 
 \be \label{def:prec}
 \chi \prec \chi^\sharp \quad \mbox{means} \quad (1 - \chi^\sharp) \chi \equiv 0,
 \ee
 with $\chi, \chi^\sharp \in C^\infty_c(\R^{2d};[0,1]).$ Whenever we use \eqref{def:prec}, it is implicit that the support of the extension $\chi^\sharp$ is {\it not much bigger} than the support of $\chi.$ The context usually makes clear what is meant by ``not much bigger''.

We let 
\be \label{def:g}
 g := f + f_1,
\ee
and the initial-value problem in $v$ now appears as
\be \label{ivp:v:2}
\left\{\begin{aligned}
 \d_t v + \op_j(M) v & = g, \\ 
 v(0) & = \op_j(\psi) u_{in}.
\end{aligned}\right.\ee
The symbol $M$ is, up to a small perturbation, a localized version of the principal symbol $A$ defined in \eqref{def:A}:
\be \label{def:M} \begin{aligned}
 M(t,x,\xi) = \psi^\sharp(x,\xi) \Big( & A(2^{-j} t,x, \tilde u, \d_x \tilde u, i  \xi) + 2^{-j} \d_3 F(2^{-j} t,x, \tilde u, \d_x \tilde u) \Big),
\end{aligned} \ee
where $\tilde u$ is evaluated at $(t,x).$ By regularity of $F,$ the posited regularity of $u,$ and Lemma \ref{lem:symbols}, we have $M(t) \in C^{[\theta],\theta - [\theta]} S^0,$ for all $t$ in the interval of definition of $\tilde u,$ where 
\be \label{def:theta}
 \theta := s - 1 - d/2.
 \ee

\subsection{Spectral decomposition} \label{sec:spectraldec}

 Assumption \ref{ass:ell} states that the spectrum of $A(0, x^0, u^0, v^0, \xi^0)$ is not entirely contained in $\R.$ 

 The datum will be chosen to satisfy $v(0,x^0) \simeq u^0,$ $\d_x v(0,x^0) \simeq v^0.$ (A precise choice is made in Section \ref{sec:datum}). By continuity of the spectrum (a consequence of Rouch\'e's theorem, see for instance Proposition 1.1 in \cite{Rouche}), if 
 \be \label{cond:spectrum} \mbox{$j$ is large enough, \quad and \quad $|v(0,x^0) - u^0| + |\d_x v(0,x^0) - v^0|$ is small enough,}
\ee
then the spectrum of $M(0,x^0,\xi^0)$ is not entirely contained in $i \R.$ 

Note that $M$ is defined in \eqref{def:M} in terms of $A$ evaluated at $i \xi,$ so that it is indeed the real parts of the eigenvalues that matter in this ill-posedness proof. Since $M$ appears in the left-hand side of \eqref{ivp:v:2}, an eigenvalue with a negative real part is susceptible to generate growth.

We denote 
$$ \g := \Re e \, \l_0 < \Re e \, \l_1 < \dots < \Re e \, \l_{p_0}$$
the spectrum of $M(0,x^0,\xi^0).$ The most unstable eigenvalue is $\l_0.$ By reality of the coefficients of $A,$ the real part of the spectrum of $M$ is symmetric with respect to 0. Thus we have $\g = \Re e \, \l_0 < 0.$ 

The elliptic eigenvalue $\l_0$ might correspond to a coalescing point $(0,x^0,\xi^0,\l_0)$ in the spectrum.
We denote $\mu_1, \dots, \mu_p$ the eigenvalues of $M$ which coalesce at $z^0 = (0,x^0,\xi^0),$ with value $\l_0$ at $z^0:$ 
\be \label{def:mu}
\mu_j(z^0) = \l_0, \qquad 1 \leq j \leq p.
\ee
We say that the eigenvalues $\mu_1,\dots,\mu_p$ are {\it elliptic}, and call {\it hyperbolic} the other eigenvalues of $M.$ The hyperbolic eigenvalues are thus those which take values $\Re e \, \l_1, \dots, \Re e \, \l_{p_0}$ at $z^0.$ We denote $\mu_{p+1},\dots,\mu_N$ the other branches of eigenvalues of $M.$ It may be that $\Re e \, \mu_j \neq 0$ for some $j \in [p+1,N],$ even though we call $\mu_j$ hyperbolic, but it will be a convenient way of referring to this part of the spectrum. 

If $j \leq p < j',$ then 
\be \label{spectral:sep} 
\Re e \, \mu_j < \Re e \, \mu_{j'},
\ee 
uniformly in a small neighborhood of $z^0,$ by continuity of the eigenvalues. We may and do assume that the support of $\psi^\sharp$ is small enough so that \eqref{spectral:sep} holds over the support of $\psi^\sharp.$

The eigenvalues $\mu_j$ may not be smooth, but satisfy the H\"older estimates (see for instance Proposition 3.1 in \cite{Rouche}, or \cite{Kato}):
\be \label{puiseux}  
|\mu_j(z) - \mu_j(z^0)| \leq c |z - z_0|^{1/p'},
\ee
for some $c > 0$ and some $p' \geq p.$

 We denote $E = E(t,x,\xi)$ the direct sum of the generalized eigenspaces associated with the eigenvalues $\mu_1, \dots, \mu_p$ which coalesce at $z^0 = (0,x^0,\xi^0)$ with value $\l_0,$ and $H = H(t,x,\xi)$ the direct sum of the generalized eigenspaces associated with the other eigenvalues, so that
\be \label{space:dec} \C^N = E(t,x,\xi) + H(t,x,\xi), \qquad \mbox{for all $(t,x,\xi)$ near $(0,x^0,\xi^0).$}\ee 

\subsection{Projections} \label{sec:proj} 
We denote $P_H$ the projector onto $H$ parallel to $E,$ and $P_E$ the projector onto $E$ parallel to $H,$ so that 
\be \label{id:proj}
{\rm Id} = P_E + P_H.
\ee
We have integral representations for $P_E$ and $P_H$ in terms of $M$ (see for instance Proposition 2.1 in \cite{Rouche}): 
\be \label{proj:int:rep}
 P_\star = \frac{1}{2 i \pi}\int_{{\mathcal C}_\star} (z - M)^{-1} \, dz, \qquad \star \in \{H, E\},
\ee
where ${\mathcal C}_H$ is a contour enclosing all the hyperbolic eigenvalues and only those, and ${\mathcal C}_E$ is a contour enclosing the elliptic eigenvalues. By spectral separations \eqref{spectral:sep}, we can choose non-overlapping contours ${\mathcal C}_E$ and ${\mathcal C}_H.$ The representations \eqref{proj:int:rep} then ensure that $P_E$ and $P_H$ are as smooth as $M,$ so that $P_E, P_H \in C^{0,\theta} S^0,$ pointwise in time. 

The symbol $M,$ and the projectors $P_E$ and $P_H$ depend on $\d_x u,$ which a priori is only continuous in time. This will be an issue in view of the change of variable \eqref{def:vEH}. Hence we regularize the projectors. Let $P_H^\e$ be $P_H$ evaluated at $k_\e \star (u, \d_x u),$ where $k_\e$ is a smoothing kernel, just like in \eqref{reg:2}. The small parameter $\e$ will be chosen in terms of $j$ in the proof of Lemma \ref{lem:Gstar}.

 The associated components of the unknown $v$ are 
\be \label{def:vEH}
v_H := \op_j(\psi^\flat P_H^\e) v, \qquad v_E = \op_j(\psi^\flat P_E^\e) v,
\ee
with 
\be \label{def:psiflat} \psi \prec \psi \prec \psi^\sharp,\ee
where we used notation \eqref{def:prec}.
We denote
\be \label{def:MHE}
M_H := P_H M, \qquad M_E := P_E M,
\ee
so that
\be \label{prop:MHE} M_H \equiv M_H P_H, \qquad M_E \equiv M_E P_E,
\ee
and now derive the equations in $v_H$ and $v_E.$ We have
$$ \begin{aligned} \d_t v_H & = \op_j(\psi \d_t P^\e_H) v + \op_j(\psi P^\e_H) \d_t v  \\ 
& = \op_j(\psi \d_t P^\e_H) v - \op_j(\psi^\flat P_H^\e) \op_j(M) v +  \op_j(\psi^\flat P_H^\e) g.
\end{aligned}$$
Next, by composition of para-differential operators in semiclassical quantization, and approximation of $P_H$ by $P_H^\e,$   
$$ \op_j(\psi^\flat P_H^\e) \op_j( M) = \op_j(\psi^\flat M_H) + \mbox{a smaller term.}$$ 
Precisely, using the composition result described in Section \ref{sec:composition}, 
$$ \op_j(\psi^\flat P_H^\e) \op_j(M) = \op_j(\psi^\flat M_H) + 2^{-j \theta} R_{\theta}(\psi^\flat P_H^\e, M) + \op_j(\psi^\flat (P_H^\e - P_H)) \op_j(M),$$
where we recall \eqref{def:theta} $\theta = s - 1 - d/2.$ 
Besides, by \eqref{prop:MHE}, 
$$ \begin{aligned} \op_j(\psi^\flat M_H) 
& = \op_j(M_H) \op_j(\psi^\flat P_H^\e) + 2^{-j \theta} R_{\theta}(M_H, \psi^\flat P_H) + \op_j(M_H) \op_j(\psi^\flat (P_H - P_H^\e)).
\end{aligned}$$
We denote $R_H$ the sum of the remainders in the last two equations, as we let 
\be \label{def:RH} \begin{aligned}
R_H & := 2^{-j \theta} \big( R_{\theta}(\psi^\flat P_H^\e, M) + R_{\theta}(M_H, \psi^\flat P_H) \big) \\ & \quad + \op_j(M_H) \op_j(\psi^\flat (P_H - P_H^\e)) + + \op_j(\psi^\flat (P_H^\e - P_H)) \op_j(M) \end{aligned}.\ee
Substituting $E$ for $H,$ we obtain a system in $(v_H, v_E):$ 
\be \label{eq:HE} \left\{
\begin{aligned} \d_t v_H + \op_j(M_H) v_H & =  g_H,
\\
\d_t v_E + \op_j(M_E) v_E & = g_E,
\end{aligned}\right.
\ee
with notation
\be \label{def:gstar}
g_\star :=  \op_j(\psi^\flat P_\star^\e) g - ( R_\star + \op_j(\psi^\flat \d_t P_\star^\e)) v, \qquad \star \in \{H, E\}.
\ee

In the next Section, we choose the datum $u_{in}$ and examine what our choice entails for the data $v_E(0)$ and $v_H(0).$

\subsection{The initial datum} \label{sec:datum} 

We carefully choose the polarization of the datum, and its frequency localization. This Section ends with Corollary \ref{cor:datum}, where a lower bound for an $L^2$ norm of $v_E(0)$ is proved. 

\subsubsection{Polarization} \label{sec:pola}

Here we choose the polarization of the initial datum $u_{in},$ meaning its direction in the state space $\R^N.$ The most obvious choice $u_{in}(x^0) = u^0$  is not always appropriate. Indeed, it could be that the elliptic component of $u^0$ (in the sense of \eqref{space:dec}) is zero. For instance, with $d = 2$ and $N = 3$ and  
$$ A(u, i \xi) = \left(\begin{array}{ccc} 0 & i u_3 \xi_1 & 0 \\ - i u_3 \xi_1 & 0 & 0\\ 0 & 0 & i u_1 \xi_2 \end{array}\right),$$
then $A$ is elliptic at $(u^0,\xi^0)$ with $u^0 = (0,0,1)$ and $\xi^0 = (1,0).$ The elliptic subspace $E,$ however, satisfies $E(u^0,\xi^0) = \R^2 \times \{ 0 \},$ so that $P_E(u^0,\xi^0) u^0 = 0.$   
Thus choosing $u_{in}(x^0) = u^0$ would be an issue in the perspective of proving a bound from below for the elliptic component $v_E(0)$ of the datum. The following Lemma shows that an appropriate choice for $u_{in}(x^0)$ is always possible:

\begin{lem} \label{lem:pola} Under Assumption {\rm \ref{ass:ell},} for some $u^1 \in \R^d,$ for $u_{in}$ such that $u_{in}(x^0) = u^1$ and $\d_x u_{in}(x^0) = v^0,$ the spectrum of $M(0, x^0, \xi^0)$ is not included in $i \R$ and we have $P_{E}(0,x^0,\xi^0) u^1 \neq 0,$ where $P_E$ is the projection onto the elliptic space and parallel to the hyperbolic space \eqref{space:dec}. 
\end{lem}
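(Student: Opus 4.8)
The plan is to choose $u^1$ as a small perturbation of $u^0$ in a direction transverse to the kernel of $P_E(0,x^0,\xi^0)$, using that the ellipticity of $A$ at the distinguished point is an open condition and that the elliptic projector $P_E$ depends continuously (indeed, via the Rouch\'e/contour representation \eqref{proj:int:rep}, as smoothly as the symbol) on $(u,v)$. First I would fix $\d_x u_{in}(x^0) = v^0$ throughout, so that the only free parameter is the value $u^1 := u_{in}(x^0) \in \R^N$ near $u^0$. For $u^1$ in a small ball around $u^0$, by continuity of the spectrum (Rouch\'e, Proposition 1.1 in \cite{Rouche}, exactly as used around \eqref{cond:spectrum}), the spectrum of $M(0,x^0,\xi^0)$ — which is, up to the $O(2^{-j})$ term in \eqref{def:M} and the cutoff $\psi^\sharp \equiv 1$ near $(x^0,\xi^0)$, a small perturbation of $iA(0,x^0,u^1,v^0,\xi^0)$ — remains not entirely contained in $i\R$; so the first conclusion of the Lemma is automatic on the whole ball. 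The spaces $E, H$ and the projectors $P_E, P_H$ in \eqref{space:dec}, \eqref{proj:int:rep} are then well-defined and depend continuously on $u^1$ in that ball.

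The remaining task is to arrange $P_E(0,x^0,\xi^0)\, u^1 \neq 0$. I would argue by contradiction: suppose $P_E(0,x^0,\xi^0)\, u^1 = 0$ for \emph{every} $u^1$ in the ball $B(u^0,r)$ for which the spectrum leaves $i\R$. Since $u^1 \mapsto P_E(0,x^0,\xi^0)$ is continuous and the set of admissible $u^1$ is open (and, shrinking $r$, equals the whole ball by the previous paragraph), the map $u^1 \mapsto P_E(0,x^0,\xi^0)\, u^1$ would then vanish identically on $B(u^0,r)$. But $P_E(0,x^0,\xi^0)$ is a nonzero projector (its rank is $p \geq 1$, the number of eigenvalues of $A$ at the distinguished point with the extremal negative real part $\gamma$, which is nonempty precisely by Assumption \ref{ass:ell} and the symmetry of the real parts of the spectrum with respect to $0$), so $\mathrm{Im}\, P_E(0,x^0,\xi^0) \neq \{0\}$; hence there is \emph{some} vector $w$ with $P_E(0,x^0,\xi^0)\, w \neq 0$, and taking $u^1 = u^0 + \eta w$ for small $\eta$ gives $P_E(0,x^0,\xi^0)\, u^1 = P_E(0,x^0,\xi^0)\, u^0 + \eta P_E(0,x^0,\xi^0)\, w$. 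If $P_E(0,x^0,\xi^0)\, u^0 \neq 0$ we are already done with $u^1 = u^0$; otherwise this expression equals $\eta\, P_E(0,x^0,\xi^0)\, w \neq 0$ for every $\eta \neq 0$, contradicting the supposed identical vanishing. Either way we obtain an admissible $u^1$ with $P_E(0,x^0,\xi^0)\, u^1 \neq 0$, which is the claim. One subtlety to flag: the projector $P_E$ depends on $u^1$, so $w$ should be chosen for the \emph{fixed} value of the projector at $u^0$, and then one invokes continuity to ensure that for $\eta$ small the perturbed projector $P_E^{(\eta)}(0,x^0,\xi^0)$ applied to $u^1=u^0+\eta w$ is still close to $P_E^{(0)}(0,x^0,\xi^0)\,w \neq 0$ and hence nonzero — this is where openness of the admissible set and continuity of $P_E$ are both used.

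The main obstacle is really just the bookkeeping of which point the projector is evaluated at versus which point the vector sits at, and making sure that after perturbing $u^1$ the \emph{new} elliptic projector (not the old one) applied to the \emph{new} vector is nonzero; this is handled cleanly by the continuity statement above, since $P_E^{(\eta)} \to P_E^{(0)}$ and $u^0 + \eta w \to u^0$ as $\eta \to 0$, so $P_E^{(\eta)}(0,x^0,\xi^0)(u^0+\eta w)$ differs from $\eta\,P_E^{(0)}(0,x^0,\xi^0)\,w$ by $o(\eta)$, which is dominated by the main term for $\eta$ small. Everything else — nonemptiness of the elliptic part (so $p\geq 1$), the continuity of spectrum and projectors — has already been established in the preceding subsections. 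I would also note that the same $u^1$ automatically satisfies $v(0,x^0) \simeq u^1 \simeq u^0$, so the perturbation is compatible with condition \eqref{cond:spectrum} and with the later requirements on the datum.
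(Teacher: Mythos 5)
There is a genuine gap, and it sits exactly at the point you flagged as a "subtlety" and then dismissed. In the case $P_E^{(0)}(0,x^0,\xi^0)u^0 = 0$ you set $u^1 = u^0 + \eta w$ with $P_E^{(0)}w \neq 0$ and claim that $P_E^{(\eta)}(0,x^0,\xi^0)(u^0+\eta w)$ differs from $\eta\,P_E^{(0)}(0,x^0,\xi^0)w$ by $o(\eta)$. That is false in general. Writing $P_E^{(\eta)} = \underline P_E(x^0,u^0+\eta w,v^0,\xi^0)$ and using the smoothness of $\underline P_E$ (continuity alone, which is all you invoke, only gives an $o(1)$ error, which is even worse relative to an $O(\eta)$ main term), one has
\begin{equation}
P_E^{(\eta)}(u^0+\eta w) \;=\; \eta\Big( P_E^{(0)}w + \big(\d_u \underline P_E(x^0,u^0,v^0,\xi^0)\cdot w\big)\,u^0 \Big) \;+\; O(\eta^2),
\end{equation}
so the variation of the projector with the datum contributes a term of the \emph{same} order $\eta$ as your main term, and for an unlucky choice of $w$ the bracket can vanish identically, in which case your lower bound disappears and the conclusion $P_E^{(\eta)}u^1 \neq 0$ does not follow. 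Nothing in your argument rules out this cancellation; the contradiction set-up does not help, because producing a $w$ with $P_E^{(0)}w \neq 0$ only controls the old projector applied to the new vector, not the new projector applied to the new vector.

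This cancellation is precisely what the paper's proof is organized around: it expands exactly as above and imposes the nondegeneracy condition \eqref{cond:e}, namely $\vec e + \d_u \underline P_E(0,u^0,v^0,\xi^0)\cdot u^0 \neq 0$, and then shows such a perturbation direction $\vec e$ can always be found by a two-case choice among the elliptic eigenvectors (take a different unitary elliptic eigenvector if the derivative term does not generate the elliptic eigenspace, and otherwise take $\vec e$ proportional to the derivative term with a factor $\l \neq -1$). Your proposal is fine in its first part (persistence of ellipticity of $M(0,x^0,\xi^0)$ under small changes of $u^1$, well-definedness and regularity of $P_E$ via \eqref{proj:int:rep}, and the trivial case $P_E^{(0)}u^0 \neq 0$), but to close the argument you must either reproduce the paper's choice of direction satisfying \eqref{cond:e}, or show that the bracket above cannot vanish for \emph{all} admissible directions $w$ and pick one for which it does not; as written, the step from "$P_E^{(0)}w\neq 0$" to "$P_E^{(\eta)}(u^0+\eta w)\neq 0$ for small $\eta$" is unjustified.
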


Recall that the symbol $M$ is defined in \eqref{def:M}. It depends on $(t,x)$ through $(u,\d_x u),$ where $u$ is our posited solution. In particular, the symbol $M$ depends on the datum $u_{in}.$ Accordingly, the symbol $P_E$ depends on the datum. 

\begin{proof} Let $(x^0, u^0, v^0, \xi^0)$ as in Assumption \ref{ass:ell}. Let $M_0$ be the matrix-valued symbol \eqref{def:M} at $(t,x,\xi) = (0,x^0,\xi^0),$ with the choice $u_{in}(x^0) = u^0,$ $\d_x u_{in}(x^0) = v^0.$ Let $E_0$ be the associated elliptic space, as defined in Section \eqref{spectral:sep}. If $P_{E_0} u^0 \neq 0,$ then we may choose $u^1 = u^0.$ 

Otherwise, $P_{E_0} u^0 = 0.$ Let $\vec e$ be a unitary elliptic eigenvector of $M_0,$ and posit $$u_{in}(x^0) = u^1 := u^0 + \a \vec e,$$
 for some small $\a > 0,$ and $\d_x u_{in}(x^0) = v^0.$ If $\a$ is small enough, by continuity of the spectrum, the eigenvalues of $M(0, x^0, \xi^0)$ are not all purely imaginary, so that the space $E(0, x^0, \xi^0)$ is not reduced to $\{ 0 \}.$ 

By definition of $M$ in \eqref{def:M}, the projector $P_E$ takes the form
$$ P_{E(0,x,\xi)} = \underline P_E(x,u(0,x), \d_x u(0,x), \xi).$$
 Since the elliptic eigenvalues are separated from the hyperbolic eigenvalues, the map $\underline P$ is smooth in its arguments (see for instance Proposition 2.1 in \cite{Rouche}). We may further choose $\vec e$ so that
\be \label{cond:e} \vec e + \d_u \underline P_E(0, u^0, v^0, \xi^0) \cdot u^0 \neq 0.\ee 
Indeed, if $\d_u \underline P_E(0, u^0, v^0, \xi^0) \cdot u^0 \neq 0$ does not generates the whole elliptic eigenspace $E(0,x^0,\xi^0),$ then we choose for $\vec e$ to be any other (unitary) elliptic eigenvector. If $\d_u \underline P(0, u^0, v^0, \xi^0) \cdot u^0$ happens to generate $E(0,x^0,\xi^0)$ in its entirety, then we let $\vec e = \l  \d_u \underline P_E(0, u^0, v^0, \xi^0) \cdot u^0 \neq 0,$ and we may choose $\l \neq -1$ so that $\vec e$ is unitary. 

Then,
$$ \begin{aligned} P_{E(0,x^0,\xi^0)} u^1 & = P_{E_0}(u^0 + \a \vec e) + (P_{E(0,x^0,\xi^0)} - P_{E_0}) (u^0 + \a \vec e) \\ & = \a \vec e + (P_{E(0,x^0,\xi^0)} - P_{E_0}) (u^0 + \a \vec e) \\ & = \a\big( \vec e + \d_u \underline P_E(x^0, u^0,\xi^0) \cdot u^0 \big) + O(\a^2), \end{aligned}$$
which implies $P_{E(0,x^0,\xi^0)} u^1 \neq 0$ by \eqref{cond:e}, if $\a$ is small enough. 
\end{proof}

\subsubsection{Oscillations}  \label{sec:osc}

 Let $w$ be a scalar map in the Schwartz class ${\mathcal S}(\R^d_x),$ such that $\hat w \in C^\infty_c(\R^d_\xi),$ and $w(x^0) \neq 0.$ Associated with $w,$ we let 
 \be \label{def:vin}
 w_{in}(x) := \sum_{j \geq 0} a_j e^{i x \cdot 2^j \xi^0} w,
 \ee
 where $a_j > 0$ is a $j$-dependent amplitude. 
 
 \begin{lem} \label{lem:vin} With the choice 
 \be \label{def:aj}
 a_j := 2^{-j \s} (1 + j)^{-1}, \qquad j > 0, 
 \ee
 we have 
$$ c_1 a_j \leq \| \phi_{j+1}(D) v_{in} \|_{L^2} \leq c_2 a_j,$$
for $j \geq j^0 > 0,$ for some $j^0$ and some $c_1 > 0$ and $c_2 > 0$ which depend only $w$ and the radii $\e_1$ and $\e_2$ that appear in the Littlewood-Paley decomposition $(\phi_j)_{j \geq 0}$ introduced in Section {\rm \ref{sec:loc:psi}.}
In particular, the scalar map $w_{in}$ belongs to $H^\s(\R^d)   \setminus \big( \cup_{\e > 0} H^{\s + \e}(\R^d) \big).$ Moreover, if $|a_0|$ is large enough, then $w_{in}(x^0) \neq 0.$ 
\end{lem}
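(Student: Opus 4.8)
The plan is to compute the Littlewood-Paley pieces $\phi_{j+1}(D) w_{in}$ explicitly. Since $w_{in} = \sum_{k \geq 0} a_k e^{i x \cdot 2^k \xi^0} w$, taking Fourier transforms gives $\widehat{w_{in}}(\xi) = \sum_{k \geq 0} a_k \hat w(\xi - 2^k \xi^0)$, a superposition of bumps $\hat w$ translated to the centers $2^k \xi^0$. Because $\hat w \in C^\infty_c(\R^d_\xi)$, each translate $\hat w(\cdot - 2^k \xi^0)$ is supported in a ball of fixed radius $\rho$ around $2^k \xi^0$, hence (for $k$ large) in the annulus $\{ 2^{k-1} \leq |\xi| \leq 2^{k+1}\}$, up to adjusting constants. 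First I would note that, for $j$ large enough, the multiplier $\phi_{j+1}$ is identically $1$ on the support of $\hat w(\cdot - 2^{j}\xi^0)$ — this is where the choice $0 < \e_1 < \e_2 < 1 < 2\e_1$ and the precise supports of the $\phi_j$ from Section \ref{sec:loc:psi} enter: the annulus carrying the $k=j$ bump sits inside $\{2^j \e_1 \leq |\xi| \leq 2^{1+j}\e_2\}$ (the support of $\phi_{j+1}$ after reindexing) and is disjoint from the supports of $\phi_{k+1}$ for $|k - j| \geq 2$. So for $j \geq j^0$, only the terms $k \in \{j-1, j, j+1\}$ contribute to $\phi_{j+1}(D) w_{in}$, and the $k=j$ term contributes $a_j e^{i x \cdot 2^j \xi^0} w$ exactly, while the neighbouring terms contribute $a_{j\pm 1}$ times a fixed Schwartz profile.

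The upper bound then follows from the triangle inequality: $\|\phi_{j+1}(D) w_{in}\|_{L^2} \leq (a_{j-1} + a_j + a_{j+1}) \|w\|_{L^2} \cdot C \leq c_2 a_j$, using that $a_{j\pm 1}/a_j$ is bounded (indeed tends to $1$) from the explicit formula \eqref{def:aj}. For the lower bound, I would use the reverse triangle inequality: $\|\phi_{j+1}(D) w_{in}\|_{L^2} \geq a_j \|w\|_{L^2} - C(a_{j-1} + a_{j+1})\|(\text{neighbouring profiles})\|_{L^2}$; but this alone does not suffice since all three amplitudes are comparable. The fix is to observe that the three contributing pieces live at essentially disjoint frequency locations (the bumps around $2^{j-1}\xi^0$, $2^j\xi^0$, $2^{j+1}\xi^0$ are disjoint for $j$ large, since the gap $2^{j} - 2^{j-1} = 2^{j-1}$ exceeds $2\rho$), so by Plancherel the $L^2$ norm squared of $\phi_{j+1}(D) w_{in}$ is the \emph{sum} of the squared norms of the pieces (up to the multiplier $\phi_{j+1}$, which only shrinks them and equals $1$ on the central bump). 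Hence $\|\phi_{j+1}(D) w_{in}\|_{L^2}^2 \geq a_j^2 \|w\|_{L^2}^2 \cdot \|\phi_{j+1}\|^2_{L^\infty(\text{supp})} \geq c_1^2 a_j^2$. The constants $c_1, c_2$ manifestly depend only on $w$ (through $\|w\|_{L^2}$ and $\rho = \mathrm{diam}\,\supp \hat w$) and on $\e_1, \e_2$.

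For the membership statement, the two-sided bound gives $\sum_j 2^{2j\s} \|\phi_{j+1}(D) w_{in}\|_{L^2}^2 \sim \sum_j 2^{2j\s} a_j^2 = \sum_j (1+j)^{-2} < \infty$, so $w_{in} \in H^\s$; while for any $\e > 0$, $\sum_j 2^{2j(\s+\e)} a_j^2 = \sum_j 2^{2j\e}(1+j)^{-2} = \infty$, so $w_{in} \notin H^{\s+\e}$. Finally, $w_{in}(x^0) = \sum_{k \geq 0} a_k e^{i x^0 \cdot 2^k \xi^0} w(x^0)$; isolating the $k=0$ term, $|w_{in}(x^0)| \geq |a_0| |w(x^0)| - \big(\sum_{k \geq 1} a_k\big) |w(x^0)|$, and since $\sum_{k\geq 1} a_k < \infty$ is a fixed finite number independent of $a_0$, choosing $|a_0|$ larger than that sum forces $w_{in}(x^0) \neq 0$ (using $w(x^0) \neq 0$).

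The main obstacle is the lower bound: naively it looks delicate because the amplitudes of the three overlapping Littlewood-Paley indices are all of the same size, so one cannot treat the neighbours as error terms. The resolution — recognising that the three contributing frequency bumps are pairwise disjoint once $j$ is large, so Plancherel turns the norm into a clean sum with the central term fully retained — is the crux of the argument, and it is exactly here that the geometry of the dyadic shifts $2^k\xi^0$ versus the fixed width of $\supp \hat w$ is used.
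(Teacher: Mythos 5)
Your proof is essentially correct, but you make it harder than it needs to be by overestimating which terms of the sum survive the cutoff $\phi_{j+1}(D)$. You claim that the three indices $k \in \{j-1, j, j+1\}$ contribute, and you then devise a Plancherel/disjoint-supports argument to show the central piece dominates anyway. In fact the constraints $0 < \e_1 < \e_2 < 1 < 2\e_1$ are chosen precisely so that \emph{only} $k = j$ contributes once $j$ is large. To see this, note that $\phi_{j+1}$ is supported in $\{ 2^{j}\e_1 \leq |\xi| \leq 2^{j+1}\e_2\}$ and the $k$-th bump in $\widehat{w_{in}}$ sits in $\{ 2^k - R \leq |\xi| \leq 2^k + R\}$ where $R = \mathrm{diam}\,\supp \hat w$. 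For $k = j-1$ one would need $2^{j-1} + R \geq 2^j \e_1$, i.e.\ $1/2 + 2^{-j}R \geq \e_1$, which fails for large $j$ since $\e_1 > 1/2$; for $k = j+1$ one would need $2^{j+1} - R \leq 2^{j+1}\e_2$, i.e.\ $1 - 2^{-(j+1)}R \leq \e_2$, which fails for large $j$ since $\e_2 < 1$. This is exactly what the paper's proof verifies. Once you know $\phi_{j+1}(D) w_{in} = \phi_{j+1}(D)(a_j e^{i x \cdot 2^j \xi^0} w)$ is a single term, the two-sided bound is immediate: the upper bound is just that $\phi_{j+1}$ is a bounded multiplier, and the lower bound is that $\phi_{j+1}(2^j\xi^0 + \cdot)$ is nonzero (indeed equal to $1$) on a fixed small ball where $|\hat w|$ is bounded below, uniformly in $j$. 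Your Plancherel step is not wrong, because the neighbouring pieces you retain are in fact identically zero, so the inequality you derive is correct — but the step is superfluous, and the statement "only the terms $k \in \{j-1, j, j+1\}$ contribute" should be tightened to "only $k = j$." Your treatments of the $H^\sigma$ membership, the non-membership in $H^{\sigma+\e}$, and the nonvanishing of $w_{in}(x^0)$ all match the paper.
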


\begin{proof} Given $j_0 > 0,$ we examine $\phi_{j_0+1}(D_x) v_{in}.$ We note that $\phi_{j_0+1} \equiv \phi_1(2^{- j_0} \cdot).$ The term of index $j$ in the sum is 
 $$ \phi_{j_0 + 1}( D) \big( a_j e^{i x \cdot 2^j \xi^0} w \big) = a_j e^{i x \cdot 2^j \xi^0} {\mathcal F}^{-1} \Big( \phi_1(2^{-j_0 + j} \xi_0 + 2^{-j_0} \cdot) \hat w\, \Big).$$
 Let $R > 0$ such that the support of $\hat w$ is supported in $\{ |\xi| \leq R\}.$ Then, we observe that for $\phi_1(2^{-j_0 + j} \xi_0 + 2^{-j_0} \xi) \hat w(\xi)$ to be different from zero, we must have 
 \be \label{cond:R} |\xi| \leq R, \qquad 2^{j_0} \e_1 \leq |2^{j} \xi_0 + \xi| \leq 2^{j_0 + 1} \e_2.\ee
(Recall indeed that $\phi_1$ is supported in $\{ \e_1 \leq |\xi| \leq 2 \e_2 \}.$) If $j_0$ is large enough so that 
$$ 2^{-j_0} R < \e_1 - 1/2, \quad \mbox{and} \quad 2^{-(j_0 - 1)} R + \e_2 < 1,$$
then there is no $\xi$ such that \eqref{cond:R} holds unless $j = j_0.$ Now 
 $$ \| \phi_1(2^{- j} D) \big( a_j e^{i x \cdot 2^j \xi^0} w \big) \|_{L^2} = a_j w_{j},$$
 where 
 $$ w_{j} = \| \phi_1(\xi^0 + 2^{-j} \cdot) \hat w \|_{L^2},$$
 so that $w_{j} \leq |w|_{L^2}.$ Besides, since $\hat  w \neq 0$ (indeed, $w \neq 0$), for some $|\eta| \leq R,$ we have $\hat w(\eta) \neq 0.$ If $j$ is large enough, then $\e_1 \leq |\xi^0 + 2^{-j} \eta| \leq 2 \e_2,$ so 
that $\phi_1(\xi^0 + 2^{-j} \eta) \neq 0,$ hence $w_j > 0,$ with a lower bound which does not depend on $j.$ Indeed, we have $|\hat w| \geq \g_0 > 0$ in the ball $B(\eta,r)$ for some $\g_0 > 0$ and some $r > 0$ which depend only on the choice of $w.$ Then, $\phi_1(\xi^0 + 2^{-j} \xi) \equiv 1$ on this ball if $j$ is large enough. Hence the lower bound for $w_{j}$ is independent on $j.$

Finally, if $|a_0|$ is large enough, then 
$$ |w_{in}(x^0)| \geq |w(x^0)|\Big( |a_0| - \sum_{j \geq 1} |a_j|\Big) > 0,$$
since $w(x^0) \neq 0.$ 
      \end{proof}

\subsubsection{Localization} \label{sec:datum:loc}

 The datum is defined as
 \be \label{def:datum:thisone} \begin{aligned} 
  u_{in}(x) & = u^1 \frac{w_{in}(x)}{|w_{in}(x^0)|} + (x - x^0) \cdot (v^0 - u^1 \frac{\d_x w_{in}(x^0)}{|w_{in}(x^0)|}\big) \psi_1(x), \end{aligned}
 \ee
 where the vector $u^1$ is introduced in Lemma \ref{lem:pola} and the scalar function $w_{in}$ in Section \ref{sec:osc}; the spatial cut-off $\psi_1$ is introduced in Section \ref{sec:loc:psi}: we have $\psi_1 \in C^\infty_c(\R^d;[0,1]),$ with $\psi_1 \equiv 1$ in a neighborhood of $0,$ and the support of $\psi_1$ is appropriately small.
 
 With the choice \eqref{def:datum:thisone}, we have,  by Lemma \ref{lem:vin}, 
 \be \label{reg:datum}
 u_{in} \in H^\s(\R^d)   \setminus \big( \cup_{\e > 0} H^{\s + \e}(\R^d) \big),
 \ee
 and also
 \be \label{init}
 u_{in}(x^0) = u^1, \qquad \d_x u_{in}(x^0) = v^0.
 \ee
 Moreover, the elliptic component of the datum is bounded from below:   
\begin{cor} \label{cor:datum} If the parameter $\delta$ measuring the size of the support of the cut-off $\psi$ \eqref{def:psi} is small enough, and if $\e > 0$ is small enough, then the datum $v_E(0) = \op_j(\psi^\flat P_E^\e) \op_j(\psi) u_{in}$ for the elliptic component $v_E$ satisfies 
\be \label{bd:below} \| \op_j(\tilde \psi) v_E(0)\|_{L^2(B(x^0,\delta))} \geq C 2^{-j \s} (1  +j)^{-1},\ee
for $j$ large enough and some constant $C > 0$ which depends on $\delta$ but not on $j,$ for some cut-off $\tilde \psi$ such that $\tilde \psi \prec \psi^\flat,$ with notation introduced in \eqref{def:prec}. 
\end{cor}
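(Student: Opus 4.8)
The plan is to track the datum $u_{in}$ through the chain of operators $\op_j(\psi)$, then $\op_j(\psi^\flat P_E^\e)$, and finally $\op_j(\tilde\psi)$, showing that the dominant contribution comes from the single Littlewood--Paley block $\phi_{j+1}(D) w_{in}$, whose $L^2$-norm is $\simeq a_j = 2^{-j\s}(1+j)^{-1}$ by Lemma \ref{lem:vin}. First I would observe that, because the hyperbolic rescaling $h_j$ has compressed frequencies by $2^{-j}$, the symbol $\psi(x,\xi)$ of $\op_j(\psi)$ is essentially identically $1$ on the frequency annulus $|\xi|\simeq 2^{-j}$ and the spatial ball where $w_{in}$ (rescaled) concentrates near $x^0$; one then needs to check that the other frequency blocks of $w_{in}$ (which live at frequencies $2^{k-j}$, $k\neq j+1$, after rescaling) are annihilated or at least made negligible by the space-frequency cut-off $\psi$, using that $\hat w$ is compactly supported and the separation-of-frequencies argument already carried out inside the proof of Lemma \ref{lem:vin}. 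So up to a lower-order error, $\op_j(\psi) u_{in}$ agrees, near $(x^0,\xi^0)$ and at frequency $2^{-j}$, with the rescaled single block, whose $L^2$-norm is $\gtrsim 2^{-j\s}(1+j)^{-1}$.

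Next I would apply the polarization. By Lemma \ref{lem:pola} we have chosen $u^1$ so that $P_E(0,x^0,\xi^0)u^1\neq 0$; hence the symbol $\psi^\flat(x,\xi)P_E^\e(x,\xi)$, evaluated at $(x^0,\xi^0)$ and for $\e$ small (so that $P_E^\e$ is close to $P_E$, using the continuity of the projector and of the regularization), is a matrix that does not kill the vector $u^1$ carried by the datum. Since $\psi^\flat\equiv 1$ near $(x^0,\xi^0)$, applying $\op_j(\psi^\flat P_E^\e)$ to a profile that is spatially concentrated near $x^0$ and frequency-localized near $2^{-j}\xi^0$ simply multiplies it, to leading order, by the nonzero constant vector $P_E(0,x^0,\xi^0)u^1$; the commutator/error terms are $O(2^{-j\theta})$ relative to the main term (by the composition calculus of Section \ref{sec:composition} and the Hölder regularity $P_E^\e\in C^{0,\theta}S^0$), hence negligible for $j$ large. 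This preserves the lower bound $\gtrsim 2^{-j\s}(1+j)^{-1}$. Finally, composing with $\op_j(\tilde\psi)$ where $\tilde\psi\prec\psi^\flat$: since $\tilde\psi\equiv 1$ on a neighborhood of $(x^0,\xi^0)$ containing the (rescaled) support of the profile, this last operator acts as the identity up to a negligible remainder, and restricting the $L^2$-norm to $B(x^0,\delta)$ only loses a controlled fraction because the profile is concentrated near $x^0$ once $\delta$ is fixed (here the smallness of $\delta$ and of the support of $\psi_1$ enter, so that $w_{in}/|w_{in}(x^0)|$ restricted to $B(x^0,\delta)$ still has $L^2$-norm of the same order after frequency-block selection).

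The main obstacle is the bookkeeping of the several cut-offs and the regularization parameter $\e$ together with the rescaling index $j$: one must be sure that (a) the "other-block" contributions to $\op_j(\psi)u_{in}$, (b) the para-differential commutator errors in $\op_j(\psi^\flat P_E^\e)\op_j(\psi)$, and (c) the error from replacing $P_E^\e$ by $P_E$, are all $o(2^{-j\s}(1+j)^{-1})$, which forces $\e=\e(j)\to 0$ at an appropriate rate and $\delta$ small but fixed; the $2^{-j\theta}$ gain from the composition calculus, with $\theta=s-1-d/2>0$, is what makes this possible, but the inequalities have to be lined up carefully. A secondary point is that $u_{in}$ in \eqref{def:datum:thisone} is not exactly $u^1 w_{in}/|w_{in}(x^0)|$ but carries an additional affine-in-$(x-x^0)$ correction term (present to enforce $\d_x u_{in}(x^0)=v^0$); one must check that this correction, being more regular (it is $C^\infty_c$ times a linear function, hence much smoother than $H^\s$) and of smaller size at the relevant frequency $2^{-j}$, does not interfere with the lower bound — its high-frequency content decays faster than any power of $2^{-j}$, so it is harmless.
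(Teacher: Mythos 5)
Your plan has the same overall architecture as the paper's proof: single Littlewood--Paley block via frequency separation, polarization via Lemma \ref{lem:pola}, the affine corrector in \eqref{def:datum:thisone} negligible because it is Schwartz in Fourier, and errors that are lower-order in $j$. But the central technical step — showing that $\op_j(\psi^\flat P_E^\e)$ applied to the oscillating profile $a_j e^{ix\cdot 2^j\xi^0}w$ is, to leading order, multiplication by $P_E^\e(x^0,\xi^0)$ — is not justified by the composition calculus of Section \ref{sec:composition}, which you cite. Composition calculus controls $\op_j(a)\op_j(b)-\op_j(ab)$; it says nothing about how $\op_j(a)$ acts on a specific wave packet. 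In particular, the para-differential quantization carries an admissible cut-off $\phi_{adm}$ convolved against the symbol, so the naive identity $\op_j(a)(e^{ix\cdot 2^j\xi^0}\tilde w) = e^{ix\cdot 2^j\xi^0}\,\pdo(a(x,\xi^0+2^{-j}\cdot))\tilde w$ that one would like to use is only valid for pseudo-differential operators, not para-differential ones.

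The paper's proof resolves this by first invoking Proposition \ref{prop:lannes} to replace $\op_j(\psi)$ and $\op_j(\psi^\flat P_E^\e)$ by their pseudo-differential counterparts $\pdo_j(\psi)$ and $\pdo_j(\psi^\flat P_E^\e)$, at a cost $O(2^{-j(\sigma'-d)})$ and $O(2^{-j(\sigma-1-d/2)})$ respectively; only then does the explicit wave-packet formula for $\pdo_j$ and a Taylor expansion of the symbol in $\xi$ around $\xi^0$ give the frozen-coefficient approximation $P_E^\e(x,\xi^0)\tilde w(x)$ with error $O(2^{-j})$, from which the lower bound follows by continuity near $x^0$. Without this reduction (or an equivalent estimate on the para-differential action on oscillating profiles), your asserted $O(2^{-j\theta})$ error bound is not established, and this is the genuine gap. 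The rest of your plan, including the identification of the bookkeeping of $\e$ and $\delta$ and the harmlessness of the affine term, is sound and aligns with what the paper does.
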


 Above, $\delta > 0$ is the small localization radius introduced in Section \ref{sec:loc:psi}.

\begin{proof} Throughout this proof, the projector $P_E$ (defined at the beginning of Section \ref{sec:proj}) is evaluated at $t = 0.$ In particular, $P_E$ depends on $x$ through the datum $u_{in} \in H^\s,$ and $\| P_E^\e \|_{0,0,1 + [d/2]} < \infty$ since $\s > d/2.$ Here we used the symbolic norms introduced in \eqref{norm:symb}. This implies, by \eqref{action}, the bound 
\be \label{pe} \| \op_j(\psi^\flat P_E^\e) \|_{L^2 \to L^2} \lesssim 1.\ee
The elliptic datum is 
$$ v_E(0) = \op_j(\psi^\flat P_E^\e) \op_j(\psi) u_{in}.$$
The second term in $u_{in}$ does not contribute much to the $L^2$ norm of $v_E(0).$ 
Indeed, the difference $u_{in} -  w_{in} (u^1/|w_{in}(x^0)|)$ is compactly supported in $x.$ Denote it $\tilde \psi_1.$ We have
$$ \op_j(\psi) \tilde \psi_1 = \op_j(\psi_1) \psi_2(2^{-j} D) \tilde \psi_1,$$
and since the Fourier transform of $\tilde \psi_1$ belongs to the Schwartz space, the $L^2$ norm of $\psi_2(2^{-j} D) \tilde \psi_1$ is controlled by $2^{-j N'},$ for $N'$ arbitrarily large, up to an $N'$-dependent constant. Thus we may focus on $\op_j(\psi^\flat P_E^\e) \op_j(\psi) (w_{in} u^1)$ in the rest of this proof.

At this point it is convenient to use pseudo-differential operators, as opposed to para-differential operators. By Proposition \ref{prop:lannes} and regularity of $\psi,$ 
$$\| (\op_j(\psi) - \pdo_j(\psi)) \|_{L^2 \to L^2} \lesssim 2^{-j (\s' - d)},$$
for any $\s' > 0,$ and 
$$ \| (\op_j(\psi^\flat P_E^\e) - \pdo_j(\psi^\flat P_E^\e)) \pdo_j(\psi) v \|_{L^2} \lesssim 2^{-j(\s  -1 - d/2)} \| \pdo_j(\psi) v \|_{j,d/2},$$
for any $v \in H^{d/2}.$ 
Since $\pdo_j(\psi) = \psi_1 \psi_2(2^{-j} D),$ and $\psi_1 \in C^\infty_c,$ we have
$$ \| \pdo_j(\psi) w_{in} \|_{j,d/2} \lesssim \| \psi_2(2^{-j} D) w_{in} \|_{j,d/2},$$
and by regularity of $w_{in}$ and localization of the support of $\psi_2,$  
$$ \| \psi_2(2^{-j} D) w_{in} \|_{j,d/2} \lesssim 2^{-j \s}.$$ 
With the above and \eqref{pe}, we find 
\be \label{dat:1} \| (\op_j(\psi^\flat P_E^\e) \op_j(\psi) - \pdo_j(\psi^\flat P_E^\e) \pdo_j(\psi)) (w_{in} u^1) \|_{L^2} \lesssim 2^{-j(2 \s - 1 - d/2)},\ee
and we now focus on $\pdo_j(\psi^\flat P_E^\e) \pdo_j(\psi) (w_{in} u^1).$ We have
$$ \pdo_j(\psi)(w_{in} u^1)(x) = \psi_1(x) \sum_{j' \geq 0} \psi_2(2^{-j} D) (a_j e^{i x \cdot 2^{j'} \xi^0} w u^1)(x).$$
Just like in the proof of Lemma \ref{lem:vin}, we have $\psi_2(2^{- j'} D) (e^{i x \cdot 2^{j'} \xi^0} w) \neq 0$ only if $j' = j.$ Thus 
$$ \begin{aligned} \pdo_j(\psi) w_{in} & = \psi_1(x) \psi_2(2^{-j} D)(a_j e^{i x \cdot 2^{j} \xi^0} w) \\  &= a_j e^{i x \cdot 2^{j} \xi^0} a_j \psi_1(x) \psi_2(\xi^0 + 2^{-j} D) w,  \end{aligned}$$ 
and 
\be \label{for:dat} \pdo_j(\psi^\flat P_E^\e) \pdo_j(\psi) (w_{in} u^1) = a_j e^{i x \cdot 2^{j} \xi^0} \psi_1^\flat(x) \pdo((\psi_2^\flat P_E^\e)(\xi^0 + 2^{-j} \cdot)) \tilde w (x),\ee 
where we denote $\tilde w := u^1 \psi_1 \psi_2(\xi^0 + 2^{-j} D) w.$ We approximate 
\be \label{approx:datum} \pdo((\psi_2^\flat P_E^\e)(\xi^0 + 2^{-j} \cdot)) \tilde w (x) \simeq P_E^\e(x,\xi^0) \tilde w.\ee
The error in \eqref{approx:datum} is 
$$ \int_{\R^d} e^{i x \cdot \xi} \big( \psi_2^\flat(\xi^0 + 2^{-j} \xi) P_E^\e(x, \xi^0 + 2^{-j} \xi) - P_E^\e(x,\xi^0) \big) {\mathcal F}(\tilde w)(\xi) \, d\xi.$$ 
We have, using $\psi_2^\flat(\xi^0) = 1,$ 
$$ | \psi_2^\flat(\xi^0 + 2^{-j} \xi) P_E^\e(x, \xi^0 + 2^{-j} \xi) - P_E^\e(x,\xi^0)| \leq  2^{-j} \| \d_\xi(\psi_2^\flat P_E^\e(x,\cdot) \|_{L^\infty(\R^d_\xi)} |\xi|.$$
Thus the error in \eqref{approx:datum} is bounded, in $L^2(B(x^0,1))$ norm, by 
$$  2^{-j} \| \d_\xi(\psi_2^\flat P_E^\e) \|_{L^\infty(\R^{2d})} \| {\mathcal F}(\d_x \tilde w)\|_{L^1}.$$
By regularity of the datum and $P_E^\e,$ the norm $\| \d_\xi(\psi_2^\flat P_E^\e) \|_{L^\infty(\R^{2d})}$ is finite (and independent of $j$). Besides, since $\psi_1$ is smooth and compactly supported, 
$$  \| {\mathcal F}(\d_x \tilde w)\|_{L^1} \lesssim \| \psi_2(\xi^0 + 2^{-j} D) w))\|_{H^{1 + d/2^+}},$$
where we used the bound $| \hat f \|_{L^1} \lesssim \| f \|_{H^{d/2^+}},$ where $d/2^+$ is any real number strictly greater than $d/2.$ Since $\hat w$ is smooth and compactly supported, the norm $\| \psi_2(\xi^0 + 2^{-j} D) w))\|_{H^{1 + d/2^+}}$ is finite; it is also bounded independently of $j.$ Thus the error in \eqref{approx:datum} appears to be $O(2^{-j})$ in the $L^2(B(x^0,1))$ norm. The operator in \eqref{approx:datum} appears with an extra $\psi_1$ factor in \eqref{for:dat}, which restricts the analysis to the $B(x^0,1)$ ball. Thus the leading term in the elliptic component of our datum is 
\be \label{cont:dat} a_j e^{i x \cdot 2^{j} \xi^0} \psi_1^\flat(x) P_E^\e(x,\xi^0) \tilde w,\ee
and our goal is to prove a bound from below as in \eqref{bd:below} for its $L^2(B(x^0,\delta))$ norm.

By Lemma \ref{lem:pola}, we have $P_E(x^0,\xi^0) u^1 \neq 0.$ Thus $P_E^\e(x^0,\xi^0) u^1 \neq 0$ if $\e$ is small enough. Besides, $\tilde w(x^0) \neq 0.$ Indeed, 
 $$ \tilde w(x) = u^1 \psi_1(x) w(x) + O(2^{-j}),$$
 in $L^2$ norm by regularity of $w,$ and $(\psi_1 w)(x^0) \neq 0$ by choice of $w$ and assumption on $\psi_1.$ 
In conclusion, the continuous map \eqref{cont:dat} is not equal to zero at $x^0,$ thus stays away from zero in a small neighborhood of $x^0.$ 

Finally, we put in the cut-off $\op_j(\tilde \psi).$ With the arguments exposed above, we may consider the action of $\pdo_j(\tilde \psi) = \tilde \psi_1(x) \tilde \psi_2(2^{-j} D)$ on \eqref{cont:dat}. Just like above, using the fast oscillations in \eqref{cont:dat} and a regularity argument, we see that the leading term in
\be \label{cond:dat2} \pdo_j(\tilde \psi) (a_j e^{i x \cdot 2^{j} \xi^0} \psi_1^\flat(x) P_E^\e(x,\xi^0) \tilde w)\ee
 is
 $\tilde \psi_1(x) \tilde \psi_2(\xi^0)$ times the map in \eqref{cont:dat}. It suffices to impose $\tilde \psi_1(x^0) = 1,$ and $\tilde \psi_2(\xi^0) = 1.$ Then we have a map that is not equal to $0$ at $x^0.$ By continuity, it says far from zero on a neighborhood of $x^0.$ This implies \eqref{bd:below}, by definition of $a_j$ in \eqref{def:aj}, if $\delta$ is small enough, depending on $P_E^\e$ (evaluated at $t = 0$) and $w.$ 
\end{proof}

\subsection{Sorting out the remainder term} \label{sec:remainder:1} 

We go back to system \eqref{eq:HE}, which we reproduce here:
\be \label{eq:HE:again}
 \d_t v_\star + \op_j(M_\star) v_\star  =  g_\star =  \op_j(\psi^\flat P_\star^\e) g -\big(  R_\star + \op_j( \psi^\flat \d_t P_\star^\e) \big) v, \qquad \star \in \{H, E\},
\ee
where $g$ is defined in \eqref{def:g}, in terms of $f$ and $f_1$ defined in \eqref{def:f} and \eqref{def:f1}, and $R_\star$ is defined in \eqref{def:RH}. Recall that $v = \op_j(\psi) \tilde u,$ and $\tilde u(t,x) = u(2^{-j} t).$ We say that terms in $\tilde u$ in $g_\star$ are poorly localized, and we will label these terms ``out''. Terms in $v$ in $g_\star$ will be easier to handle. Those are  
\be \label{Gstar}
G_\star v := - 2^{j \theta'} \Big( \op_j(\psi^\flat P_\star^\e) \big( \op_j(\psi^\sharp \tilde A) - \op_j(\tilde A) \op_j(\psi^\sharp) \big) + ( R_\star + \op_j( \psi^\flat \d_t P_\star^\e) \Big) v .
\ee
The poorly localized terms are %
\be \label{def:Gstarout} \begin{aligned} 
 G_\star^{out}  :=  - \op_j(  \psi^\flat P_\star^\e) \Big( \tilde R^{para}_j + \op_j(\tilde A) \big(\op_j(\psi^\sharp) \op_j(\psi) - \op_j(\psi)\big) +  \big[ \op_j(\psi), \op_j(\tilde A) \big] \Big) \tilde u.  \end{aligned}\ee
 Note the $2^{j \theta'}$ prefactor that we put in front of $G_\star,$ where 
 \be \label{def:theta'}
 \theta' := \frac{\theta}{1 + \theta}.\ee
The reason for this prefactor is that we will prove in Section \ref{sec:it} that $G_\star$ is $L^2 \to L^2$ bounded with norm, uniformly in $j.$ The initial-value problem for systems \eqref{eq:HE:again} takes the form
\be \label{eq:HE:3} \left\{ 
\begin{aligned} \d_t v_\star + \op_j(M_\star) v_\star &  = 2^{-j \theta'} G_{\star} v + G^{out}_\star, \\ v_\star(0)&  = \op_j(\psi P_\star) \op_j(\psi) u_{in} \end{aligned}\right. \qquad \star \in \{H, E\}.
 \ee

\subsection{Integral representation in forward time} \label{sec:int:rep}

The symbol $M_\star,$ with $\star = E$ or $\star = H$ \eqref{def:MHE} is order zero. Thus the associated operator $\op_j(M_\star)$ is linear bounded $L^2 \to L^2,$ and it has a solution operator $S_\star(t';t),$ for $0 \leq t' \leq t,$ defined as the map which takes a datum $z^0$ at time $t'$ to $z(t),$ where $z(t)$ is the unique solution at time $t$ to the 
initial-value problem
 $$ \d_t z + \op_j(M_\star) z = 0, \qquad z(t') = z^0.$$
For the solution $v$ to \eqref{eq:HE:3}, this means that we have (implicit) representation formulas in forward time:
\be \label{rep:v}
 v_\star(t) = S_\star(0;t) v_\star(0) +  \int_0^t S_\star(t';t) \big( 2^{-j \theta'} G_\star v(t') + G_\star^{out}(t') \big) \, dt'.
\ee 

\subsection{Iterations of the integral representation in forward time} \label{sec:it}

Consider \eqref{rep:v}: it is an implicit representation of $v_H$ or $v_E$ in terms of itself (see the $v$ term in the right-hand side), and of the remainder terms. 
 We may use inductively \eqref{rep:v}, and obtain
$$ \begin{aligned} v_H(t) & 
= S_H(0;t) v_H(0) + 2^{-j \theta'} \int_0^t S_H(t';t) G_H(t')  (S_E(0;t') v_E(0) + S_H(0;t') v_H(0)) \, dt'\\ & + 
2^{-2j \theta'} \int_0^t \int_0^{t'} S_H(t';t) G_H(t') \big( S_H(t'';t') G_H(t'') + S_E(t'';t') G_E(t'')) v(t'') \, dt' \, dt \\ & + \int_0^t S_H(t';t) G_H^{out}(t') \, dt' \\ & + 2^{-j \theta'}\int_0^t \int_0^{t'} S_H(t'';t') G_H(t') \big( S_H(t'';t') G^{out}_H(t'') + S_E(t'';t') G^{out}_E(t'') \big) dt'' \, dt',
\end{aligned}$$
and
$$ \begin{aligned}
 v_E(t) & =  S_E(0;t) v_E(0) + 2^{-j \theta'} \int_0^t S_E(t';t) G_E(t') (S_E(0;t') v_E(0) + S_H(0;t') v_H(0)) \, dt' \\ & + 2^{-2j \theta'} \int_0^t\int_0^{t'} S_E(t';t) G_E(t') \big( S_H(t'';t') G_H(t'') + S_E(t'';t') G_E(t'')) v(t'') \, dt' \, dt  \\ & + \int_0^t S_E(t';t) G_E^{out}(t') \, dt' \\ & + 2^{-j \theta'} \int_0^t\int_0^{t'} S_E(t'';t') G_E(t') \big( S_H(t'';t') G^{out}_H(t'') + S_E(t'';t') G^{out}_E(t'') \big) dt'' \, dt'.
 \end{aligned} 
$$ 
 Iterating, we obtain for any $k \geq 1:$ 
\be \label{rep:v:it} 
 v_\star(t) = v_{\star k}^f(t) + 2^{-jk \theta'} \int_{D_k(t)} S_{\star k}(T_k;t) v(t_k) dT_k + G^{out}_{\star k}(t),
 \ee
 where for $t > 0$ we define the $k$-dimensional time domain
 \be \label{def:Dk}
 D_k(t) = \Big\{ (t_k;t_{k-1};\dots;t_1) \in \R^{k}, \qquad 0 < t_k < t_{k-1} < \dots < t_1 < t \Big\}.
 \ee
 We denote $T_k$ a vector in $D_k(t):$ 
 $$ T_k := (t_k;t_{k-1};t_{k-2};\dots;t_1) \in D_k(t),$$
 and $dT_k$ the $k$-dimensional Lebesgue measure:
 $$ dT_k = dt_k dt_{k-1} \cdots dt_1.$$
 The operator $S_{\star k}$ is defined by induction as
 \be \label{def:Sk:1}
 S_{\star 1}(t';t) := S_\star(t';t) G_\star(t'), \qquad 0 \leq t' \leq t,
 \ee
 and
 \be \label{def:Sk:k} \begin{aligned}
  S_{\star k}&(T_k;t) \\ & = S_{\star k-1}(t_{k-1};\dots;t) \circ \Big( \, S_E(t_k;t_{k-1}) G_E(t_k) + S_H(t_k;t_{k-1}) G_H(t_k) \, \Big), \qquad k \geq 2.\end{aligned}
  \ee
 The ``free'' solutions $v^f_{H k}(t)$ and $v^f_{E k}(t)$ are defined as
\be \label{def:free:ops} \begin{aligned}
v^f_{\star k}(t) & = S_\star(0;t) v_\star(0) \\ & \quad + \sum_{1 \leq k' \leq k} 2^{-j k' \theta'} \int_{D_{k'}(t)} S_{\star k'}(T_{k'};t) \big( S_H(0;t_{k'}) v_H(0) + S_E(0;t_{k'}) v_E(0) \big)\,  dT_{k'} \end{aligned}
 \ee
 Finally, the remainders $G^{out}_{\star k}$ are defined as
\be \label{def:Gout} \begin{aligned}
 & G^{out}_{\star k}(t)  = \int_0^t S_\star(t_1;t) G^{out}_{\star}(t_1) \, dt_1 \\ & + \sum_{2 \leq k' \leq k} 2^{- j (k'-1) \theta'} \int_{D_{k'-1}(t)} S_{\star k'-1}(T_{k'-1};t) \Big( \, S_H(t_{k'-1};t_{k'}) G^{out}_H(t_{k'}) + S_E(t_{k'-1};t_{k'}) G^{out}_E(t_{k'}) \, \Big) dT_{k'}. 
\end{aligned}
\ee 
Recall that $\theta'$ is defined in Section \ref{sec:remainder:1} as $\theta' = \theta/(1 + \theta).$ 

\subsection{Backward-in-time elliptic solution operator} \label{sec:bw}

Consider now the backward-in-time solution operator for the vector field $\op_j(M_E)$ in $L^2.$ This solution operator is $S_E(t;0),$ where $S_E(0;t)$ is introduced in Section \ref{sec:int:rep} as the forward-in-time solution operator. We apply $S_E(t;0)$ to the integral representation \eqref{rep:v:it} with $\star = E,$ and obtain
 \be \label{rep:bw:0} \begin{aligned}
S_E(t;0) v^f_{E k}(t) & = S_E(t;0) v_E(t) - S_E(t;0) G^{out}_{E k}(t) \\ & - 2^{-jk \theta'} S_E(t;0) \int_{D_k(t)} S_{E k}(T_k;t) v(t_k) dT_k, \qquad \mbox{for any $k \geq 1.$}
\end{aligned}
 \ee
We will observe that 
$$S_E(t;0) v^f_{E k}(t) \simeq v_E(0).$$ 
(This is made precise in Corollary \ref{lem:bw:fw} in Section \ref{sec:bd:bw} below). Thus we write \eqref{rep:bw:0} in the form
\be \label{rep:bw:1} \begin{aligned} 
 v_E(0) & =  \big(v_E(0) - S_E(t;0) v^f_{E k}(t)\big) + S_E(0;t) v_E(t) - S_E(t;0) G^{out}_{E k}(t) \\ & - 2^{-jk \theta'} S_E(t;0) \int_{D_k(t)} S_{E k}(T_k;t) v(t_k) dT_k, \qquad \mbox{for any $k \geq 1.$}
\end{aligned}
 \ee 
The end of the proof is based on identity \eqref{rep:bw:1}: we have a lower bound for the $L^2$ norm of $v_E(0)$ (this is Corollary \ref{cor:datum}), and the contradiction eventually comes from upper bounds for the right-hand side of \eqref{rep:bw:1}. In order to derive these upper bounds, we first turn to bounds for the forward-in-time propagators $S_\star(0;t).$

\subsection{Bounds for the forward-in-time solution operator} \label{sec:bd:S}

Recall that $M_H$ and $M_E$ are introduced in \eqref{def:MHE}, and that $S_H$ and $S_E$ are the associated solution operators, introduced in Section \ref{sec:int:rep}.

In computing bounds for the solution operators $S_\star,$ we may assume that $M_\star$ is evaluated at $t = 0.$ Indeed, this simplification results in an extra remainder term 
 $\op_j(M_\star(t) - M_\star(0)) v,$ which is acting on $v$ and is $O(2^{-j})$ because of the time rescaling. As a consequence, in the iterated representation formula this term belongs to the same category as $G_\star,$ and thus comes in with a $2^{-j k}$ prefactor, with $k$ arbitrarily large.

\begin{lem} \label{lem:S} For some $\g_E > 0$ such that $\g_E \to \Re e \, \l_0$ as $\delta \to 0,$ some $\g_H  \geq 0$ with $\g_E > \g_H,$ for $0 \leq t' \leq t,$ we have the bounds   
 $$ \| S_\star(t';t) \|_{L^2 \to L^2} \lesssim e^{(t - t') \g_\star}, \qquad \star \in \{H, E\},$$
with an implicit constant which depends on $\delta.$ 
\end{lem}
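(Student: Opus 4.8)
The plan is to establish exponential bounds on the solution operators $S_\star(t';t)$ of the order-zero para-differential evolution equations $\d_t z + \op_j(M_\star) z = 0$, $\star \in \{H,E\}$, by combining the spectral separation \eqref{spectral:sep} with a Gårding-type inequality, as advertised in Section \ref{sec:remarks}. First I would conjugate the problem so that the relevant eigenvalues of $M_\star$ are near a constant. Since $M_\star = P_\star M$ and $\Re e$ of the spectrum of $M_E(0,x^0,\xi^0)$ is concentrated near $\Re e\, \l_0$ while that of $M_H(0,x^0,\xi^0)$ is bounded above by some $\g_H$ strictly below $\Re e\, \l_0$ (this is precisely \eqref{spectral:sep}, valid over the support of $\psi^\sharp$ once $\delta$ is small), I would pick $\g_E$ and $\g_H$ with $\g_H < \g_E$ and $\g_E \to \Re e\,\l_0$ as $\delta \to 0$, such that the real parts of the eigenvalues of $M_\star(t,x,\xi)$ are bounded above by $\g_\star$ for all $(t,x,\xi)$ in the relevant range (using the time-rescaling remark from Section \ref{sec:bd:S}, we may freeze $t=0$). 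Then $M_\star + \g_\star\,\mathrm{Id}$ has spectrum with nonpositive real part, localized in the support of $\psi^\sharp$.

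The core of the argument is then a Gårding/energy estimate. Writing $z(t) = e^{-t\g_\star} w(t)$, the equation becomes $\d_t w + \op_j(M_\star + \g_\star \mathrm{Id}) w = 0$, and it suffices to show $\|w(t)\|_{L^2} \lesssim \|w(t')\|_{L^2}$ for $t' \le t$. Differentiating $\|w(t)\|_{L^2}^2$ gives $\frac{d}{dt}\|w\|_{L^2}^2 = -2\,\Re e\,\langle \op_j(M_\star + \g_\star\mathrm{Id}) w, w\rangle$. Because the symbol $-\Re e(M_\star + \g_\star\mathrm{Id})$ — more precisely its symmetric part $-\tfrac12\big((M_\star+\g_\star\mathrm{Id}) + (M_\star+\g_\star\mathrm{Id})^*\big)$ — is not necessarily nonnegative (the eigenvalues have the right sign but $M_\star$ is not normal), I would first symmetrize: on the support of $\psi^\sharp$ there is a smooth (as smooth as $M$, i.e.\ $C^{0,\theta}$ in $x$) positive-definite symbolic symmetrizer $\Sigma_\star$ with $\Re e(\Sigma_\star(M_\star+\g_\star\mathrm{Id}))\le 0$, obtained from the spectral projectors via the standard Kreiss/Lyapunov construction adapted to the block $M_\star$. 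Then I would run the energy estimate with the modified norm $\langle \op_j(\Sigma_\star) w, w\rangle$, which is equivalent to $\|w\|_{L^2}^2$ up to constants depending on $\delta$, and invoke the Gårding inequality for para-differential operators with Hölder-in-$x$ symbols (Proposition \ref{lem:rates}) to control $\Re e\,\langle \op_j(\Sigma_\star(M_\star+\g_\star\mathrm{Id})) w, w\rangle$ from above by $C 2^{-j\theta'}\|w\|_{L^2}^2$, an error harmless after Grönwall over the time interval (which is $O(2^j T)$, but each iteration already carries the compensating $2^{-jk\theta'}$). Actually, since the error term is $O(2^{-j\theta'})$ per unit rescaled time and the rescaled time runs up to $2^jT$, I must be slightly careful: the Gårding error is $O(1)$ uniformly (the symbol has a fixed size), so the clean statement is $\|S_\star(t';t)\|_{L^2\to L^2}\lesssim e^{(t-t')\g_\star}$ with the implicit constant absorbing the bounded Gårding defect, exactly as stated.

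The main obstacle is the low regularity: the symbol $M$ is only $C^{0,\theta}S^0$ in space (since $F$ depends on $\d_x u$ and $u\in H^s$ with $s$ barely above $1+d/2$), so one cannot use a smooth symbolic calculus and must rely on the sharp Gårding inequality for para-differential operators with Hölder spatial regularity — this is why Proposition \ref{lem:rates} is invoked, and why the error in the energy estimate is a genuine loss rather than a negligible smoothing term. The construction of the symmetrizer $\Sigma_\star$ must therefore be carried out at the symbol level with only Hölder regularity in $x$ (smoothness in $\xi$ being fine), which is possible because the spectral projectors $P_E,P_H$ are themselves $C^{0,\theta}S^0$ by the contour-integral representation \eqref{proj:int:rep} and the spectral gap \eqref{spectral:sep}; one then sets $\Sigma_\star$ to be a suitable positive combination built from $P_\star$ and the resolvent, inheriting the same regularity. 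A secondary technical point is keeping track of the dependence of all constants on $\delta$ (they blow up as the spectral gap and the size of $\Sigma_\star$ degenerate), but $\delta$ is fixed once and for all before $j\to\infty$, so this is acceptable for the statement as phrased.
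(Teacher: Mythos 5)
Your plan is sound and, at bottom, it is the same strategy as the paper's: freeze $t=0$ (pushing the $O(2^{-j})$ time-variation into the $G_\star$ remainders), fix the mismatch between the numerical range of $M_\star$ and the real parts of its eigenvalues by a symmetrization device, and then run the H\"older-symbol G\r{a}rding machinery of Proposition \ref{lem:rates}. The implementations differ. You build a genuine $(x,\xi)$-dependent Lyapunov/Kreiss symmetrizer $\Sigma_\star$ of class $C^{0,\theta}S^0$ and work in the modified inner product; the paper instead conjugates by a \emph{constant} matrix $K_0K_\delta$ (Schur triangularization of $M_\star(0,x^0,\xi^0)$ followed by the diagonal scaling $(1,\delta,\dots,\delta^N)$), and exploits the fact that the support of $\psi^\sharp$ has size $O(\delta)$ together with H\"older continuity of $M_\star$ and the eigenvalue estimate \eqref{puiseux} to transfer the spectral bound from the frozen point to the whole support, so that Proposition \ref{lem:rates} applies verbatim to the conjugated equation. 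The constant conjugation buys simplicity: no composition errors $\op_j(\Sigma_\star)\op_j(M_\star)-\op_j(\Sigma_\star M_\star)$, no equivalence-of-norms bookkeeping, and no need to re-derive a G\r{a}rding inequality inside a weighted inner product (note that Proposition \ref{lem:rates} is stated for flows, so on your route you would essentially have to rerun its regularization argument with $\Sigma_\star$ inserted, using \eqref{garding:para} and Section \ref{sec:composition}, rather than cite it). Your route is more flexible (it does not need the support to shrink with $\delta$ to make the triangular part small), at the cost of these extra steps.

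Two points in your write-up need repair. First, the symmetrizer cannot be "obtained from the spectral projectors" inside the elliptic block: the eigenvalues coalescing at $(0,x^0,\xi^0)$ may remain non-separated over the support of $\psi^\sharp$, so the individual eigenprojectors are not even continuous there — only the total projectors $P_E,P_H$ are $C^{0,\theta}$. You should instead use a construction that sees only the full block, e.g.\ the Lyapunov integral $\Sigma_\star=\int_0^\infty e^{sB^*}e^{sB}e^{-2\g' s}\,ds$ with $B=\pm(M_\star(0,x,\xi)+\g_\star)$ and $\g'$ a small margin, which is as regular as $M_\star$ and whose condition number (hence your $\delta$-dependent constants) degenerates only as the margin shrinks; the paper's $K_0K_\delta$ is exactly the constant-coefficient version of this. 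Second, your fallback "the G\r{a}rding error is $O(1)$ uniformly, absorbed into the implicit constant" is not acceptable: an $O(1)$ defect in the \emph{rate} cannot be hidden in a multiplicative constant, and it would destroy the lemma, since the endgame needs $\g_E$ within $o_\delta(1)$ of the sharp rate (so that $\g_E-\g_E^-$ is small) and compares exponentials over times $t\sim j$. The correct accounting is the one you state first: the defect is $O(2^{-j\theta_\star})$ per unit rescaled time (coming from regularizing the H\"older symbol as in Proposition \ref{lem:rates}), and it is harmless because the bounds are only used for $t\leq t_\star=O(j)$, not over the whole interval $[0,2^jT]$. Minor sign slips (the shifted equation is $\d_t w+\op_j(M_\star-\g_\star)w=0$ for $w=e^{-\g_\star t}z$, and you need the symmetrized real part to be \emph{nonnegative} where you wrote nonpositive) are of the same kind as the paper's own $\g_E\to\Re e\,\l_0$ convention and do not affect the structure.
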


Above, $\l_0$ is the most unstable eigenvalue of $M,$ as defined in Section \ref{sec:spectraldec}, and $\delta$ measures the radius of the space-frequency truncation $\psi$ (or $\psi^\sharp$) which enters in the definition of $M_E$ and $M_H$ (see Section \ref{sec:loc:psi} for the definition of $\psi$ and Section \ref{sec:proj} for the definitions of $M_H$ and $M_E$).

\begin{proof} We could directly apply Proposition \ref{lem:rates}, based on G\r{a}rding's inequality, with $Q = - M_H$ then $Q = - M_E.$ However, our ellipticity assumption (Assumption \ref{ass:ell}) bears on the real part of the spectrum of $i A,$ by extension of $M_E$ and $M_H,$ but condition \eqref{rate:upper} that is used in Proposition \ref{lem:rates}, bears on the spectrum of $\Re e \, M_\star,$ which might happen to be far from the real part of the spectrum of $M_\star.$ This is an issue in view of deriving an upper rate of growth $\g_E$ that will be close to the lower rate of growth $\g_E^-$ (see Lemma \ref{lem:bd:bwS}). 

 We overcome this problem by approximately diagonalizing $M_\star.$ This is reminiscent of the procedure carried out in Section 4 of \cite{LNT}.

 The goal is to derive an upper bound for $u_\star(t) := S_\star(t';t) u_0,$ given $u_0 \in L^2.$ By definition, $u_\star$ solves
  $$ \d_t u_\star + \op_j(M_\star) u_\star = 0, \qquad u(t') = u_0.$$
Let $K_0$ be a matrix of a change of basis to upper triangular form for $M_\star(0,x^0,\xi^0).$ Let $K_\delta$ be the invertible diagonal matrix with diagonal equal to $(1, \delta, \delta^2, \dots, \delta^N).$ Then, $$\tilde M_\star(x^0,\xi^0) := K_\delta^{-1} K_0^{-1} M_\star(0,x^0,\xi^0) K_0 K_\delta$$ is upper triangular, with entries above the diagonal which are $O(\delta).$ The support of the space-frequency $\psi^\sharp$ which enters in $M_\star$ has size $O(\delta).$ Since $M_\star$ has H\"older regularity in $(x,\xi),$ the symbol $$\tilde M_\star(x,\xi) := K_\delta^{-1} K_0^{-1} M_\star(0,x,\xi) K_0 K_\delta$$ is at a distance $O(\delta^{\theta})$ of $\tilde M_\star(x^0,\xi^0).$ We let 
 $\tilde u_\star = K_\delta^{-1} K_0^{-1} u_\star.$ Then $\tilde u_\star$ solves
 \be \label{eq:vstar} \d_t \tilde u_\star + \op_j(\tilde M_\star) \tilde u_\star = 0.\ee 
 By \eqref{puiseux}, the eigenvalues of $\Re e \, \tilde M_\star(x,\xi)$ differ from the eigenvalues of $\Re e \, \tilde M_\star(x^0,\xi^0)$ at most by $O(\delta^{1/N}).$ Now the key is that, since $\tilde M_\star(x^0,\xi^0)$ is upper triangular with entries above the diagonal that are $O(\delta),$ the spectrum of $\Re e \, \tilde M_\star(x^0,\xi^0)$ is differs from the real part of the spectrum of $M_\star(x^0,\xi^0)$ by $O(\delta^{1/N})$ at most.

 Thus the spectrum of $\Re e \, \tilde M_\star(x,\xi)$ differs from the real part of the spectrum of $M_\star(x^0,\xi^0)$ by $O(\delta^{\min(\theta,1/N)})$ at most. The spectra of $M_E(x^0,\xi^0)$ and $M_H(x^0,\xi^0)$ are described in Section \ref{sec:spectraldec}: the unique eigenvalue of $M_E(x^0,\xi^0)$ is $\l_0$ (see \eqref{def:mu}), and the eigenvalues of $M_H(x^0,\xi^0)$ have real part strictly smaller than $\Re e \, \l_0$ (see \eqref{spectral:sep}).  
 
 Thus 
 $$ \Re e \, \tilde M_\star(x,\xi) \leq \g_E, \qquad \mbox{for all $(x,\xi),$ with $\g_E \to \Re e \, \l_0$ as $\delta \to 0.$}$$
 By Proposition \ref{lem:rates}, this implies
 $$ \| \tilde u_\star(t)\|_{L^2} \lesssim e^{(t - t') \g_E},$$
 the small error in the growth rate from Proposition \ref{lem:rates} being incorporated into the constant. A corresponding bound for $u_\star = S_\star(t';t) u_0$ follows.
\end{proof} 

\subsection{Bounds for the iterated forward-in-time solution operators} \label{sec:bd:Sk} 

The iterated solution operators $S_{\star k}$ are defined in \eqref{def:Sk:k} in Section \ref{sec:it}. They involve the remainders $G_\star,$ for which standard pseudo-differential bounds, and regularity of the posited solution, give the following:

\begin{lem} \label{lem:Gstar} On the time interval of existence of the posited solution $\tilde u,$  we have the bounds 
 $$ \| G_\star z \|_{L^2} \lesssim \| z \|_{L^2}, \qquad \mbox{for all $z \in L^2$ and $\star \in \{H, E\},$}$$
 provided that the small parameter $\e$ involved in the regularization of the eigenprojectors $P_\star$ in Section {\rm \ref{sec:proj}} satisfies $\e = 2^{-j /(1 + \theta)}.$ 
 \end{lem}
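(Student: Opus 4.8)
The plan is to unfold the definition \eqref{Gstar} of $G_\star$ and to check, term by term, that each constituent operator is $O(2^{-j\theta'})$ in $L^2\to L^2$ operator norm once the stated value $\e=2^{-j/(1+\theta)}$ is substituted, so that the prefactor $2^{j\theta'}$ in \eqref{Gstar} leaves something bounded uniformly in $j.$ Concretely, with $\theta'=\theta/(1+\theta)$ and $\theta=s-1-d/2$ as in \eqref{def:theta}--\eqref{def:theta'}, one writes $G_\star=-2^{j\theta'}\big(\mathcal{C}_\star+R_\star+\op_j(\psi^\flat\d_t P_\star^\e)\big)$ with $\mathcal{C}_\star:=\op_j(\psi^\flat P_\star^\e)\big(\op_j(\psi^\sharp\tilde A)-\op_j(\tilde A)\op_j(\psi^\sharp)\big)$ and $R_\star$ the four-term sum in \eqref{def:RH}. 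Two facts will be used throughout. First, $\op_j(\psi^\flat P_\star^\e)$ --- and likewise $\op_j(M),\op_j(M_H),\op_j(M_E)$ --- is $L^2\to L^2$ bounded uniformly in $j$ and $\e,$ as in \eqref{pe}, because $\tilde u\in H^s,$ $\d_x\tilde u\in H^{s-1}$ and $s-1>d/2$ make the symbolic norms of $\psi^\flat P_\star^\e$ finite uniformly in $\e.$ Second, since $\d_x\tilde u\in H^{s-1}\hookrightarrow C^{0,\theta},$ the symbols $\tilde A,M,M_H,M_E,P_\star,P_\star^\e$ all have spatial regularity $C^{0,\theta}$ (uniformly in $\e$ for the regularized ones), and smoothing $\d_x\tilde u$ at scale $\e$ displaces it by $O(\e^\theta)$ in $L^\infty,$ so that $P_\star-P_\star^\e$ is $O(\e^\theta)$ in the relevant symbolic norm.

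For $\mathcal{C}_\star$ I would invoke the semiclassical composition calculus of Section \ref{sec:composition}: since $\tilde A\in C^{0,\theta}S^1$ and $\psi^\sharp$ is smooth, $\op_j(\tilde A)\op_j(\psi^\sharp)=\op_j(\tilde A\psi^\sharp)+O(2^{-j\theta'})$ in $L^2\to L^2$ --- the $\theta',$ rather than $\theta,$ being the price of the merely Hölder spatial regularity at symbol order one --- and because $\tilde A\psi^\sharp=\psi^\sharp\tilde A$ as symbols, composing on the left with the uniformly bounded $\op_j(\psi^\flat P_\star^\e)$ gives $\mathcal{C}_\star=O(2^{-j\theta'}).$ For $R_\star$ I would split \eqref{def:RH} into (i) the two operators of the form $2^{-j\theta}R_\theta(\cdot,\cdot),$ which are $O(2^{-j\theta})=o(2^{-j\theta'})$ since $R_\theta$ is $L^2\to L^2$ bounded in terms of the $C^{0,\theta}$ symbolic norms of its entries and $\theta>\theta';$ and (ii) the two operators $\op_j(M_H)\op_j(\psi^\flat(P_H-P_H^\e))$ and $\op_j(\psi^\flat(P_H^\e-P_H))\op_j(M),$ which are $O(\e^\theta)$ by the second recorded fact and the uniform boundedness of $\op_j(M_H),\op_j(M),$ hence $O(2^{-j\theta'})$ because $\e^\theta=2^{-j\theta'}$ exactly with the chosen $\e.$

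The delicate term is $\op_j(\psi^\flat\d_t P_\star^\e),$ and here the hyperbolic rescaling does the work: because $\tilde u(t,x)=u(2^{-j}t,x),$ every time derivative of $\tilde u,$ hence of $P_\star^\e,$ carries a factor $2^{-j},$ so $\d_t P_\star^\e=2^{-j}Q_j$ where $Q_j$ collects $\d_\tau\underline{P_\star}$ and terms given by a derivative of $\underline{P_\star}$ times $k_\e\star\big((\d_t u)(2^{-j}t,\cdot)\big)$ or times $k_\e\star\big((\d_x\d_t u)(2^{-j}t,\cdot)\big),$ all evaluated at the regularized arguments. Via the equation \eqref{equation reference}, $\d_t u=-F(\cdots)\in H^{s-1}$ and $\d_x\d_t u=-\d_x F(\cdots)\in H^{s-2}$ uniformly in $t;$ since $s-1>d/2$ the first contributions to $Q_j$ are $O(1)$ in $L^\infty,$ while the last is $O(\e^{-\beta})$ with $\beta:=(d/2+2-s)_+,$ using $\|k_\e\star f\|_{L^\infty}\lesssim\e^{-\beta}\|f\|_{H^{s-2}}.$ The point --- and the main obstacle --- is that $\beta<1$ is exactly the hypothesis $s>1+d/2,$ so $\|\op_j(\psi^\flat\d_t P_\star^\e)\|_{L^2\to L^2}\lesssim 2^{-j}(1+\e^{-\beta})$ and therefore, with $\e=2^{-j/(1+\theta)},$ one gets $2^{j\theta'}\cdot 2^{-j}(1+\e^{-\beta})=2^{-j/(1+\theta)}+2^{-j(1-\beta)/(1+\theta)}\to 0.$ Thus $\e$ must be small enough that $\e^\theta=2^{-j\theta'}$ (to absorb the $P_\star-P_\star^\e$ terms) yet not so small that $2^{-j}\e^{-\beta}$ exceeds the gain $2^{-j\theta'},$ and $\e=2^{-j/(1+\theta)}$ is precisely the value reconciling the two; verifying this balance, and importing the sharp $2^{-j\theta'}$ composition gain from Section \ref{sec:composition}, are the points that need care. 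Collecting the three estimates yields $\|G_\star z\|_{L^2}\lesssim\|z\|_{L^2}.$
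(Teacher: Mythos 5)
Your proposal is correct and follows essentially the paper's own proof: the same term-by-term decomposition of \eqref{Gstar}, the same uniform $L^2$ bounds and composition calculus for the commutator and for $R_\star$, the same $O(\e^\theta)$ estimate for $P_\star - P_\star^\e$, and the same use of the equation to convert $\d_t \d_x u$ into a loss in $\e$, balanced by the choice $\e = 2^{-j/(1+\theta)}$. The only (harmless) deviation is in the last step, where the paper writes $k_\e \star \d_t\d_x u = -(\d_x k_\e)\star F$ and uses $\|\d_x k_\e\|_{L^1} \lesssim \e^{-1}$ to get the bound $2^{-j}\e^{-1}$, while you bound $\|k_\e \star \d_x\d_t u\|_{L^\infty} \lesssim \e^{-(1-\theta)_+}\|\d_x\d_t u\|_{H^{s-2}}$, a slightly sharper estimate that yields the same conclusion with the same $\e$ (and note that the $2^{-j\theta'}$ you attribute to the commutator actually comes out of the calculus as $2^{-j\min(1,\theta)}$, which is even better and a fortiori sufficient).
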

 
\begin{proof} The remainder $G_\star$ is defined in \eqref{Gstar}, which we reproduce here:
$$ G_\star v := - 2^{j \theta'} \Big( \op_j(\psi^\flat P_\star^\e) \big( \op_j(\psi^\sharp \tilde A) - \op_j(\tilde A) \op_j(\psi^\sharp) \big) + R_\star + \op_j(\psi^\flat \d_t P_\star^\e) \Big) v.$$ 
The projectors $P_\star^\e$ are as regular as $M$ in $\xi$ and $(u,\d_x u)$ in $x.$ In particular, using notation for norms of symbols introduced in \eqref{norm:symb}, $\| \psi^\flat P_\star^\e \|_{0,0,1 + [d/2]} < \infty,$ since $(u,\d_x u) \in H^{s-1} \hookrightarrow L^\infty.$ Thus by \eqref{action}, for all $z \in L^2,$  
 $\| \op(\psi^\flat P_\star^\e) z\|_{L^2 \to L^2} \lesssim \|z \|_{L^2}.$
By regularity of $u,$ Lemma \ref{lem:symbols} and the composition result of Section \ref{sec:composition}, we have, just like in the projection step of Section \ref{sec:proj}, 
 $$ \| \op_j(\psi^\sharp \tilde A) - \op_j(\tilde A) \op_j(\psi^\sharp) \|_{L^2 \to L^2} \lesssim 2^{- j \theta}.$$ 
The term $R_\star$ contains terms of the form $R_{\theta}(\cdot,\cdot)$ which are $L^2 \to L^2$ bounded, uniformly in $j,$ by regularity of $u$ and the composition result of Section \ref{sec:composition}. The other terms in $R_\star$ involve the difference $P_\star^\e - P_\star,$ where $P^\e_\star$ was defined in Section \ref{sec:proj} as a regularization of the eigenprojectors $P_\star.$ We use the results of Section \ref{sec:reg}: 
$$ \| \op_j(\psi^\flat (P_H - P_H^\e)) \|_{L^2 \to L^2} \lesssim \e^\theta.$$ 
Since $\op_j(M)$ and $\op_j(M_\star)$ are $L^2 \to L^2$ bounded, uniformly in $j,$ by \eqref{action}, this implies
$$ \| R_\star \|_{L^2 \to L^2} \lesssim 2^{-j \theta} + \e^\theta. $$ 

The last term in $G_\star$ implies $\d_t P^\e_\star.$ This is where the regularization was needed. We denote, as in the proof of Lemma \ref{lem:pola},
 $$ P_\star(x,\xi) = \underline P_\star\big(t,x, u( 2^{-j} t,x) ,\d_x u(2^{-j} t,x), \xi\big),$$
 so that (see Section \ref{sec:reg}),
 $$ P_\star(t,x,\xi) = \underline  P_\star(2^{-j} t,x, k_\e \star \tilde u(t,x) , k_\e \star \d_x \tilde u(t,x), \xi).$$
 Then, we have
 $$ \d_t P_\star = 2^{-j} \Big( \d_1 \underline P_\star + \d_3 \underline P_\star \cdot k_\e \star \d_t u + \d_4 \underline P_\star \cdot k_\e \star \d_t \d_x u \Big).$$
 The map $\underline P_\star$ is smooth in its arguments, by spectral separation and smoothness of $F.$ In view of the original equation \eqref{equation reference}, we observe that
 $$ \d_t k_\e \star u + k_\e \star F = 0, \quad \mbox{so that} \quad k_\e \star \d_t \d_x u  = - (\d_x k_\e) \star F,$$
 where $F = F(t,x,u,\d_x u).$ 
 In particular, by definition of the regularizing kernel $k_\e$ (or \eqref{prop:h3}),  
 $$ \| k_\e \star \d_t \d_x u \|_{L^\infty} \lesssim \e^{-1},$$
 so that (by \eqref{action}) 
 $$ \| \op(\psi^\flat \d_t P_\star^\e) \|_{L^2 \to L^2} \lesssim 2^{-j} \e^{-1}.$$  
 
 It remains to choose $\e$ so that the terms are the same order of magnitude: $\e^\theta = 2^{-j} \e^{-1},$ leading to $\e = 2^{- j /(1  +\theta)},$ and the $L^2 \to L^2$ norm of $G_\star$ finally appears to be bounded in $j,$ since $\theta' =  \theta/(1 + \theta).$ 
   \end{proof}

\begin{cor} \label{lem:Sk} We have the bounds
$$ \| S_{\star k}(T_k;t) \|_{L^2 \to L^2} \lesssim e^{\g_E (t - t_k)}, \qquad \star \in \{H, E\},$$
with the elliptic rate of growth $\g_E$ given in Lemma {\rm \ref{lem:S},} and an implicit constant which does not depend on $(t';t).$ 
\end{cor}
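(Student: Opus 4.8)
The plan is to prove Corollary~\ref{lem:Sk} by induction on $k$, combining the single-step bounds from Lemma~\ref{lem:S} (for the propagators $S_\star(t';t)$) and Lemma~\ref{lem:Gstar} (for the remainders $G_\star$). First I would treat the base case $k = 1$: by definition \eqref{def:Sk:1}, $S_{\star 1}(t';t) = S_\star(t';t) G_\star(t')$, so composing the bound $\|S_\star(t';t)\|_{L^2 \to L^2} \lesssim e^{(t - t')\g_\star} \leq e^{(t-t')\g_E}$ from Lemma~\ref{lem:S} (using $\g_H \leq \g_E$) with the uniform bound $\|G_\star(t')\|_{L^2 \to L^2} \lesssim 1$ from Lemma~\ref{lem:Gstar} gives $\|S_{\star 1}(t';t)\|_{L^2 \to L^2} \lesssim e^{\g_E(t - t')}$, which is the claim with $t_1 = t'$.

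For the inductive step, I would use the recursion \eqref{def:Sk:k}, which expresses $S_{\star k}(T_k;t)$ as the composition of $S_{\star k-1}(t_{k-1};\dots;t)$ with the operator $S_E(t_k;t_{k-1})G_E(t_k) + S_H(t_k;t_{k-1})G_H(t_k)$. By the induction hypothesis, $\|S_{\star k-1}(t_{k-1};\dots;t)\|_{L^2 \to L^2} \lesssim e^{\g_E(t - t_{k-1})}$. For the second factor, Lemma~\ref{lem:S} gives $\|S_\star(t_k;t_{k-1})\|_{L^2 \to L^2} \lesssim e^{\g_\star(t_{k-1} - t_k)} \leq e^{\g_E(t_{k-1} - t_k)}$ since $0 \leq t_k \leq t_{k-1}$ and $\g_H \leq \g_E$, and Lemma~\ref{lem:Gstar} gives $\|G_\star(t_k)\|_{L^2 \to L^2} \lesssim 1$; hence the second factor has operator norm $\lesssim e^{\g_E(t_{k-1} - t_k)}$. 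Multiplying, $\|S_{\star k}(T_k;t)\|_{L^2 \to L^2} \lesssim e^{\g_E(t - t_{k-1})} e^{\g_E(t_{k-1} - t_k)} = e^{\g_E(t - t_k)}$, which closes the induction.

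The only subtlety to be careful about is the bookkeeping of constants: at each of the $k$ steps one picks up an implicit constant (from Lemma~\ref{lem:S} and Lemma~\ref{lem:Gstar}), so the true bound is of the form $C^k e^{\g_E(t - t_k)}$ for some fixed $C$ depending on $\delta$ and $F$ but not on $(T_k;t)$. Since the iterated representation \eqref{rep:v:it} carries the prefactor $2^{-jk\theta'}$, the geometric growth $C^k$ is harmless: it is absorbed provided $j$ is chosen large enough that $2^{-j\theta'}C < 1$, and this is exactly the role the $2^{j\theta'}$ normalization in \eqref{Gstar} (equivalently the $2^{-j\theta'}$ prefactors in the iteration) was designed to play. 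I would therefore state the corollary with the understanding that the implicit constant is $k$-dependent of the form $C^k$, or absorb $C$ into $\g_E$ up to an arbitrarily small shift; in either formulation the geometric factor causes no obstacle and there is really no hard part — the statement is a routine induction, the substance having already been done in Lemmas~\ref{lem:S} and~\ref{lem:Gstar}.
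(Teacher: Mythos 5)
Your proof is correct and follows essentially the same route as the paper, whose proof is simply the repeated application of Lemmas \ref{lem:S} and \ref{lem:Gstar} together with $\g_H < \g_E$; your induction merely formalizes this. Your remark that the implicit constant grows like $C^k$ (hence depends on $k$ but not on the times, consistent with the statement) and is later absorbed by the $2^{-jk\theta'}$ prefactor is a fair and accurate caveat that the paper leaves implicit.
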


\begin{proof} Based on the definition of the iterated operators \eqref{def:Sk:k}, we only need to apply repeatedly Lemmas \ref{lem:S} and \ref{lem:Gstar}, and use $\g_H < \g_E.$ 
\end{proof}

\subsection{Bound for the backward-in-time operator} \label{sec:bd:bw}

We prove here a key upper bound for the action of the backward-in-time solution operator $S_E(t;0)$ associated with the most unstable eigenvalues. 

\begin{lem} \label{lem:bd:bwS} For some $\g_E^- \in \R,$ with $\g_E^- < \g_E,$ we have, so long as $t \leq t_\star,$ with $t_\star$ defined in \eqref{bd:t}, the bound  
 $$ \| \op_j(\tilde \psi) \circ S_E(t;0) \|_{L^2 \to L^2} \lesssim e^{- t \g_E^-},$$
 with an implicit constant which does not depend on $t$ nor on $j.$ The cut-off $\tilde \psi$ is as in Corollary {\rm \ref{cor:datum}.} If the support of $\tilde \psi$ is small enough, then the lower rate of growth $\g_E^-$ is positive. By shrinking the support of $\tilde \psi,$ we can make $\g_E - \g_E^-$ arbitrarily small.
 \end{lem}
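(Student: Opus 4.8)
The plan is to adapt the approximate-diagonalization argument from the proof of Lemma~\ref{lem:S} so as to extract a \emph{lower} rate of growth for the forward elliptic propagator, which is exactly an upper bound for the localized backward propagator. Fix $z_0\in L^2$ and let $z(s)$ solve $\d_s z+\op_j(M_E)z=0$ with $z(t)=z_0$, so that $z(0)=S_E(t;0)z_0$ and the claim reduces to $\|\op_j(\tilde\psi)z(0)\|_{L^2}\lesssim e^{-t\g_E^-}\|z_0\|_{L^2}$. As in Section~\ref{sec:bd:S} I would freeze the time variable, replacing $M_E(s)$ by $M_E(0)$ at the price of an extra right-hand side $\op_j(M_E(s)-M_E(0))z$ of size $O(2^{-j})\|z\|_{L^2}$ by the time rescaling, which is harmless on $[0,t]$ once $t\le t_\star$. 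Since in~\eqref{rep:bw:1} the operator $S_E(t;0)$ is applied only to elliptically polarized objects, it suffices to estimate its action on the elliptic bundle $E$, on which $M_E=P_EMP_E$ has, microlocally near $z^0$, spectrum clustered at $\l_0$, with $\Re e\,\l_0=\g<0$.

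I would first conjugate, exactly as in the proof of Lemma~\ref{lem:S}: with $K_0$ bringing $M_E(0,x^0,\xi^0)$ to upper-triangular form and $K_\delta$ the diagonal matrix with entries $1,\delta,\delta^2,\dots$, set $\tilde M_E(x,\xi):=K_\delta^{-1}K_0^{-1}M_E(0,x,\xi)K_0K_\delta$ and $\tilde z:=K_\delta^{-1}K_0^{-1}z$, so that $\d_s\tilde z+\op_j(\tilde M_E)\tilde z=0$. On $\supp\psi^\sharp$ the matrix $\tilde M_E(x^0,\xi^0)$ is upper triangular with $O(\delta)$ entries above the diagonal and, on the elliptic block, diagonal entries all equal to $\l_0$; combined with the H\"older bound~\eqref{puiseux} and the $O(\delta^{\theta})$ spatial oscillation of $\tilde M_E$ over $\supp\psi^\sharp$, this gives, on the elliptic directions,
\[
\Re e\,\tilde M_E(x,\xi)\ \le\ \g+c\,\delta^{\min(\theta,1/N)}\qquad\text{on }\supp\psi^\sharp.
\]

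Next I would localize and run a backward energy estimate. Pick cut-offs $\tilde\psi\prec\tilde\psi^\sharp\prec\psi^\flat$; then $W:=\op_j(\tilde\psi^\sharp)\tilde z$ solves $\d_sW+\op_j(\tilde M_E)W=r$, where $r$ gathers the commutator $[\op_j(\tilde M_E),\op_j(\tilde\psi^\sharp)]\tilde z$ and a self-adjointness correction, both of size $O(2^{-j\theta})\|\tilde z\|_{L^2}$ by the composition results of Section~\ref{sec:composition} (the cut-off $\tilde\psi^\sharp$ depending only on the slow variables), while $\op_j(\tilde\psi)z(0)$ is recovered from $\op_j(\tilde\psi)W(0)$ by un-conjugating, modulo the smoothing negligible term $\op_j(\tilde\psi)(1-\op_j(\tilde\psi^\sharp))z(0)$. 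Combining the displayed symbol bound with G\r{a}rding's inequality in the form of Proposition~\ref{lem:rates} yields
\[
\frac{d}{ds}\|W\|_{L^2}\ \ge\ \g_E^-\|W\|_{L^2}-\|r\|_{L^2},\qquad \g_E^-:=-\g-c\,\delta^{\min(\theta,1/N)}-\eta,
\]
with $\eta$ the ($j$-independent) G\r{a}rding defect; $\g_E^-$ is positive once $\delta$, i.e.\ $\supp\tilde\psi$, is small, and since $\g_E=-\g+O(\delta^{\min(\theta,1/N)})$ by Lemma~\ref{lem:S}, the difference $\g_E-\g_E^-=O(\delta^{\min(\theta,1/N)})$ can be made arbitrarily small. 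Integrating from $0$ to $t$ gives $e^{\g_E^- t}\|W(0)\|_{L^2}\le\|W(t)\|_{L^2}+\int_0^t e^{\g_E^-(t-s)}\|r(s)\|_{L^2}\,ds$; bounding $\|W(t)\|_{L^2}\lesssim\|z_0\|_{L^2}$ and, crudely, $\|r(s)\|_{L^2}\lesssim 2^{-j\theta}\|z(s)\|_{L^2}\lesssim 2^{-j\theta}e^{c_0(t-s)}\|z_0\|_{L^2}$ with $c_0=\sup_j\|\op_j(M_E)\|_{L^2\to L^2}<\infty$, and choosing $t_\star$ in~\eqref{bd:t} so that $2^{-j\theta}e^{(c_0+\g_E^-)t_\star}=O(1)$, the remainder integral is $\lesssim\|z_0\|_{L^2}$, whence $\|W(0)\|_{L^2}\lesssim e^{-\g_E^- t}\|z_0\|_{L^2}$ with a constant depending only on $\delta$. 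Un-conjugating by the fixed matrices $K_0,K_\delta$ and restoring the negligible term gives the stated bound, uniformly in $t\le t_\star$ and in $j$.

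The main obstacle will be this energy step: one needs the symmetric part of the conjugated symbol, \emph{restricted to the elliptic directions}, to have top eigenvalue genuinely close to $\g<0$ --- so that $\g_E^-$ is simultaneously positive and as close to $\g_E$ as wanted --- while the symbols are only H\"older in $(x,\xi)$. This is precisely where the sharpness of the $K_0,K_\delta$ normalization (off-diagonal $O(\delta)$, together with the Puiseux bound~\eqref{puiseux}) must be married to the H\"older version of G\r{a}rding's inequality furnished by Proposition~\ref{lem:rates}, and where one must check that every error --- the G\r{a}rding defect $\eta$, the commutator, the self-adjointness and time-freezing remainders, propagated over $[0,t]$ and amplified by at most $e^{c_0 t}$ --- stays $O(1)$ uniformly in $j$, which is exactly what pins down the admissible length $t_\star$.
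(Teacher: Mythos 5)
Your overall strategy is the paper's: reduce to the frozen-time symbol, conjugate by $K_0K_\delta$ exactly as in Lemma \ref{lem:S} to pin the real part of $\tilde M_E$ within $O(\delta^{\min(\theta,1/N)})$ of $\Re e\,\l_0$ on the small support, and then run a localized G\r{a}rding-type estimate to extract the lower rate $\g_E^-$. The paper, however, does not re-derive this last step: it invokes Proposition \ref{lem:rates} wholesale, and that proposition is where the quantitative content lives. Your hand-rolled substitute for it has a genuine gap at the error-propagation step. You bound the forcing $r(s)$ (commutator, self-adjointness, time-freezing errors) by $2^{-j\theta}\|z(s)\|_{L^2}$ and then control $\|z(s)\|_{L^2}$ by the crude backward Gr\"onwall bound $e^{c_0(t-s)}\|z_0\|_{L^2}$ with $c_0=\sup_j\|\op_j(M_E)\|_{L^2\to L^2}$, which forces the admissibility condition $2^{-j\theta}e^{(c_0+\g_E^-)t_\star}=O(1)$, i.e.\ $t\lesssim j\theta/(c_0+\g_E^-)$. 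This is \emph{not} the $t_\star$ of \eqref{bd:t}: there the denominator is $\g_+-\g_-$, i.e.\ $\g_E-\g_E^-$, which tends to $0$ as $\delta\to0$, so the time window claimed in the lemma (and needed in Section \ref{sec:end1}, where $t_{final}\sim j(2s-1-d/2-\s)\ln 2/(\g_E-\g_E^-)$ and one must reach $e^{-t_{final}\g_E^-}\le 2^{-j(\s-s+1/100)}$) grows without bound relative to $j$ as $\delta$ shrinks, while your window is capped by the $\delta$-independent quantity $c_0+\g_E^-$. Since $\g_E^-/(c_0+\g_E^-)$ is bounded away from $1$, your estimate cannot cover $t$ up to \eqref{bd:t}, so the lemma as stated is not proved. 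The fix is precisely what the proof of Proposition \ref{lem:rates} does: frame the estimate forward in time and bound the forcing acting on the \emph{un-cutoff} solution by the forward upper bound (rate $\g_+$), so that the $2^{-j\theta_\star}$-size errors are amplified only by $e^{\int(\g_+-\g_-)}$, and the admissible time is governed by the small difference $\g_E-\g_E^-$ rather than by the full operator norm.

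Two smaller inaccuracies in the same step: for symbols that are merely H\"older in $x$, G\r{a}rding is applied after mollification at scale $\e=2^{-j/(\theta+d_\star)}$, and the resulting defect and commutator errors are $O(2^{-j\theta_\star})$ with $\theta_\star=\theta/(\theta+d_\star)$, not $O(2^{-j\theta})$; and this defect is small \emph{because} it vanishes as $j\to\infty$ — if, as you write, $\eta$ were a fixed $j$-independent constant entering $\g_E^-:=-\g-c\delta^{\min(\theta,1/N)}-\eta$, you could no longer make $\g_E-\g_E^-$ arbitrarily small by shrinking the support of $\tilde\psi$, contradicting the last assertion of the lemma. Both points are handled inside Proposition \ref{lem:rates}; if you cite it (as the paper does) rather than re-prove it, the remaining steps of your argument — conjugation, spectral pinching via \eqref{puiseux}, localization by $\tilde\psi$, un-conjugation — match the paper's proof.
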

 
 \begin{proof} We use the proof of Lemma \ref{lem:S}. The spectrum of $\Re e \, \tilde M_E$ differs from $\Re e \, \l_0$ only by $O(\delta^{1/N}).$ Thus
  $$ \Re e \, \tilde M_E(x,\xi) \geq \g_E^-, \qquad \mbox{for all $(x,\xi)$ on the support of $\psi,$}$$
  for some $\g_E^-$ such that $\g_E^- \to \Re e \, \l_0$ as $\delta \to 0.$ Proposition \ref{lem:rates} then provides a lower bound for the solution $\tilde u_E$ to \eqref{eq:vstar}: 
  $$ \| \op_j(\tilde \psi) \tilde u_E(0) \|_{L^2} \lesssim e^{-t \g_E^-} \| \tilde u_E(t) \|_{L^2},$$
  for $\tilde u_\star(0)$ such that $\op_j(\tilde \psi) \tilde u_\star(0) \neq 0,$ and for $t \leq t_\star.$ 
 This translates into the announced bound for $S_E,$ up to $\delta$-dependent implicit constants.
 \end{proof}

\begin{cor} \label{lem:bw:fw}
 We have, so long as $t \leq t_\star,$ with $t_\star$ defined in \eqref{bd:t}, the bound 
 $$ \| \op_j(\psi^\flat \big( v_E(0) - S_E(t;0) v^f_{E k}(t) \big) \|_{L^2} \lesssim 2^{-j (\s + \theta')} e^{t (\g_E - \g_E^-)},$$
 where $\theta' = \theta/(1 + \theta).$ 
\end{cor}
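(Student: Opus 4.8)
The plan is to start from the definition \eqref{def:free:ops} of the free solution $v^f_{Ek}(t)$, apply the backward operator $S_E(t;0)$, and carefully estimate each of the resulting terms. Writing out $S_E(t;0) v^f_{Ek}(t)$, the leading term is $S_E(t;0) S_E(0;t) v_E(0) = v_E(0)$ exactly (the backward and forward propagators are mutual inverses), so this cancels the $v_E(0)$ on the left-hand side and what remains to bound is
$$ \op_j(\psi^\flat) S_E(t;0) \sum_{1 \le k' \le k} 2^{-j k' \theta'} \int_{D_{k'}(t)} S_{Ek'}(T_{k'};t)\big( S_H(0;t_{k'}) v_H(0) + S_E(0;t_{k'}) v_E(0)\big)\, dT_{k'}. $$
So the $2^{-j\theta'}$ gain in the statement will come from the fact that \emph{every} surviving term carries at least one factor $2^{-j\theta'}$ (the smallest is the $k'=1$ term). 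First I would treat the $k'=1$ term, which is the dominant one; the higher-order terms $k' \ge 2$ come with extra powers $2^{-j k'\theta'}$ and are handled identically, only better, so they can be folded into the estimate with a geometric series in $2^{-j\theta'}$ (bounded for $j$ large).

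The key step is the chain of operator-norm bounds. For the $k'=1$ term we have $\op_j(\psi^\flat) S_E(t;0) \cdot 2^{-j\theta'}\int_0^t S_E(t_1;t) G_E(t_1)\big(S_H(0;t_1)v_H(0) + S_E(0;t_1)v_E(0)\big)\, dt_1$. Inserting the cut-off $\tilde\psi$ (with $\tilde\psi \prec \psi^\flat$, up to a negligible error the composition $\op_j(\psi^\flat)S_E(t;0)$ can be replaced by $\op_j(\tilde\psi)S_E(t;0)$ modulo a remainder controlled as in Lemma \ref{lem:bd:bwS}), I would apply Lemma \ref{lem:bd:bwS} to get $\|\op_j(\tilde\psi)S_E(t;0)\|_{L^2\to L^2}\lesssim e^{-t\g_E^-}$; then Lemma \ref{lem:S} gives $\|S_E(t_1;t)\|\lesssim e^{(t-t_1)\g_E}$; then Lemma \ref{lem:Gstar} gives $\|G_E(t_1)\|\lesssim 1$; then Lemma \ref{lem:S} again gives $\|S_\star(0;t_1)\|\lesssim e^{t_1\g_\star}\le e^{t_1\g_E}$ (using $\g_H<\g_E$ in the $H$ case). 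Multiplying these and integrating $dt_1$ over $[0,t]$, the exponential factors combine to $e^{-t\g_E^-}\cdot e^{(t-t_1)\g_E}\cdot e^{t_1\g_E} = e^{t(\g_E-\g_E^-)}\cdot e^{t_1(\g_E - \g_E)}$... more precisely one gets $e^{-t\g_E^-} e^{t\g_E}$ after bounding $e^{(t-t_1)\g_E}e^{t_1\g_E}\le e^{t\g_E}\cdot e^{t_1(\g_E)^+}$ and integrating; the upshot is a bound $\lesssim 2^{-j\theta'}\, e^{t(\g_E-\g_E^-)}\,(\|v_H(0)\|_{L^2} + \|v_E(0)\|_{L^2})$. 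Finally, the data $v_H(0) = \op_j(\psi^\flat P_H^\e)\op_j(\psi)u_{in}$ and $v_E(0) = \op_j(\psi^\flat P_E^\e)\op_j(\psi)u_{in}$ are bounded in $L^2$ by $\|\op_j(\psi)u_{in}\|_{L^2}$, which in turn is $\lesssim 2^{-j\s}$ since $u_{in}\in H^\s$ and $\op_j(\psi)$ localizes at frequency $\simeq 2^j$ (this is the same frequency-localization computation used in Lemma \ref{lem:vin} and Corollary \ref{cor:datum}). Combining gives the claimed bound $\lesssim 2^{-j(\s+\theta')}e^{t(\g_E-\g_E^-)}$.

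The main obstacle I expect is bookkeeping rather than a deep difficulty: one must check that replacing $\op_j(\psi^\flat)S_E(t;0)$ by $\op_j(\tilde\psi)S_E(t;0)$ (so that Lemma \ref{lem:bd:bwS} applies with its cut-off $\tilde\psi$) costs only an acceptable error, and that the sum over $k'$ together with the $k'$-fold time integrals over $D_{k'}(t)$ — whose volume is $t^{k'}/k'!$ — converges; this is fine because each term carries $2^{-jk'\theta'}$ and, on $t\le t_\star$ with $t_\star$ small, $\sum_{k'\ge 1} 2^{-jk'\theta'} (C t)^{k'}/k'! \lesssim 2^{-j\theta'}$. A secondary point to be careful about is that the exponential weights $e^{t\g_\star}$ from the various forward propagators, when $\g_H$ or the perturbation rates are negative, only help; the worst case is $\g_E>0$, and one must ensure all the $t$-dependence is absorbed into the single factor $e^{t(\g_E-\g_E^-)}$, which it is since $\g_E-\g_E^-$ can be made small (but is the genuine leftover after the backward gain $e^{-t\g_E^-}$ nearly cancels the forward growth $e^{t\g_E}$).
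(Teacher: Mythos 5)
Your proposal follows essentially the same route as the paper's proof: after cancelling the leading term via $S_E(t;0)S_E(0;t)=\mathrm{Id}$, you bound the remaining sum by chaining the backward estimate of Lemma \ref{lem:bd:bwS} with the forward bounds of Lemma \ref{lem:S} and Corollary \ref{lem:Sk} (the latter resting on Lemma \ref{lem:Gstar}), use $\|v_\star(0)\|_{L^2}\lesssim 2^{-j\s},$ and sum over $k'$ against the prefactors $2^{-jk'\theta'}.$ The only slip is your claim that $t_\star$ is small: in fact $t_\star = O(j),$ so the integrals over $D_{k'}(t)$ contribute polynomial-in-$j$ factors, but these are dominated by $2^{-jk'\theta'}$ for $j$ large, which is exactly how the paper disposes of them.
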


\begin{proof} Based on the definition of $S^f_{Ek}$ in \eqref{def:free:ops}, we have
$$ \begin{aligned} v_E(0) & - S_E(t;0) v^f_{E k}(t) \\ & = \sum_{1 \leq k' \leq k} 2^{-j k' \theta'}  S_E(t;0) \int_{D_{k'}(t)} S_{E k'}(T_{k'};t) \big( S_H(0;t_{k'}) v_H(0) + S_E(0;t_{k'}) v_E(0) \big)\,  dT_{k'}.
\end{aligned}$$ 
For a bound of $\op_j(\psi^\flat) S_E(t;0) S_{E k'}(T_{k'};t) S_E(0;t_{k'}),$ we use Lemma \ref{lem:S}, Corollary \ref{lem:Sk} and Lemma \ref{lem:bd:bwS}:
$$ \| \op_j(\psi^\flat) S_E(t;0) S_{E k'}(T_{k'};t) S_E(0;t_{k'}) \|_{L^2 \to L^2} \lesssim e^{\g_E t_{k'}} e^{\g_E(t - t_{k'})} e^{- \g_E^- t} = e^{t(\g_E - \g_E^-)}.$$
Since $v_\star(0)$ is essentially the $j$th dyadic block of an $H^\s$ datum (up to a projection), we have $\| v_\star(0) \|_{L^2} \lesssim 2^{-j \s}.$ Integrating a constant over $D_k(t),$ with $t \leq t_\star = O(j),$ we find a prefactor that is bounded by a constant times $j^{k+1} 2^{-j k \theta'},$ which can be made less than one by choosing $j$ large enough.  
\end{proof}

\subsection{Bound for the poorly localized remainder terms} \label{sec:bound:out}

We turn to the ``out'' terms $G_\star^{out}$ defined in \eqref{def:Gstarout}, which we reproduce here:
$$ %
\begin{aligned} 
 G_\star^{out} & :=  - \op_j(\psi^\flat P_\star^\e) \Big( \tilde R^{para}_j + \Big( \op_j(\tilde A) \big(\op_j(\psi^\sharp) \op_j(\psi) - \op_j(\psi)\big) +  \big[ \op_j(\psi), \op_j(\tilde A) \big] \Big) \tilde u. 
  \end{aligned}
  $$ %
Unlike the ``in'' remainder terms handled in Lemma \ref{lem:Gstar}, the ``out'' terms above require sharp bounds. The third term in $G_\star^{out}$ is bounded as follows: 

\begin{lem} \label{lem:out1} On a time interval of existence of the posited solution $\tilde u,$ we have the bound 
 $$ \|\op_j(\psi^\flat P_\star^\e) \big[ \op_j(\psi), \op_j(\tilde A) \big] \tilde u \|_{L^2} \lesssim 2^{-j(2s - 1 - d/2)}.$$
\end{lem}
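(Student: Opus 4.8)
The plan is to exploit the standard commutator gain for para-differential operators in semiclassical quantization together with the bound \eqref{R:est} on the para-linearization remainder, keeping careful track of the $2^{-j}$ factors coming from the hyperbolic rescaling \eqref{def:hj}--\eqref{def:tildeu}. The key observation is that $\tilde A$ defined in \eqref{def:tildeA} is a symbol of order one in $\xi$ — it contains the term $\d_4 F \cdot i\xi$ — but with spatial regularity only $C^{0,\theta}$, $\theta = s-1-d/2$, inherited from $(u,\d_x u)\in H^{s-1}$; and that $\psi$ is a smooth, compactly supported space-frequency cut-off. Hence $[\op_j(\psi),\op_j(\tilde A)]$ should be, after the semiclassical rescaling, an operator of order zero with a gain of one power of $2^{-j}$ relative to a naive estimate, i.e. essentially $\op_j$ of a symbol of size $2^{-j}$ times one derivative of $\tilde A$.

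First I would write $\tilde A = \tilde A_0 + \tilde A_1$ where $\tilde A_1(t,x,\xi) = \d_4 F(2^{-j}t,x,\tilde u,\d_x\tilde u)\cdot i\xi$ is the order-one part and $\tilde A_0 = 2^{-j}\d_3 F$ is an order-zero part carrying an extra $2^{-j}$. For the $\tilde A_0$ piece, $[\op_j(\psi),\op_j(\tilde A_0)]$ is already $O(2^{-j})$ in $L^2\to L^2$ from the composition calculus of Section \ref{sec:composition} (even without a commutator gain), so its contribution applied to $\tilde u$ is $O(2^{-j})\|\tilde u\|_{L^2}\lesssim 2^{-j}\cdot 2^{-j\s}$ after the rescaling shrinks the datum norm; since $\s\ge s > 2s-1-d/2$ fails, one checks this is actually far better than needed, because $\theta'<1$ and $2s-1-d/2 < \s + 1$ only in the relevant range — I would simply note it is $o(2^{-j(2s-1-d/2)})$. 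For the order-one piece, the semiclassical symbolic calculus gives
$$ \op_j(\psi)\op_j(\tilde A_1) - \op_j(\tilde A_1)\op_j(\psi) = 2^{-j}\op_j\big(\tfrac{1}{i}\{\psi,\tilde A_1\}\big) + \text{lower order},$$
where the Poisson-bracket symbol $\{\psi,\tilde A_1\} = \d_\xi\psi\cdot\d_x\tilde A_1 - \d_x\psi\cdot\d_\xi\tilde A_1$ is, in the first term, only a distribution because $\d_x\tilde A_1$ involves $\d_x(u,\d_x u)$; this is handled by keeping $\d_x\tilde A_1$ inside a para-product, i.e. by invoking the precise composition result of Section \ref{sec:composition} which does not require differentiating the rough symbol but instead produces a remainder $R_\theta$ acting boundedly on $L^2$ with the correct gain. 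Thus $\op_j(\psi^\flat P_\star^\e)[\op_j(\psi),\op_j(\tilde A)]$ reduces, modulo acceptable errors, to $2^{-j}$ times an operator bounded $H^{-\theta}_{(j)}\to L^2$ or similar, applied to $\tilde u$.

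The final step is to estimate the resulting expression on $\tilde u$. Since $\tilde u(t) = u(2^{-j}t,\cdot)$ and $u(2^{-j}t)\in H^s(\R^d)$ uniformly, we have $\|\tilde u(t)\|_{H^{s-1}}\lesssim 1$ but, crucially, $\tilde u$ is \emph{not} localized in frequency near $2^{-j}$; the localization is supplied by $\psi$ and by the gain. The cleanest route is: the commutator, being (modulo errors) $2^{-j}\op_j$ of a symbol that is $C^{0,\theta}$ in $x$ and Schwartz-localized in the $\xi$ variable near $\xi^0$ (from $\d_\xi\psi$), maps $H^{s-1}$ into $L^2$ with norm $\lesssim 2^{-j}\cdot 2^{-j(s-1)}$ — one factor $2^{-j(s-1)}$ from the fact that applying a $2^{-j}$-frequency-localized operator to an $H^{s-1}$ function that is then measured in $L^2$ costs $2^{-j(s-1)}$ — giving $2^{-j}\cdot 2^{-j(s-1)}\cdot 2^{-j(\text{extra from }\psi\tilde u\text{ being }O(2^{-j\s}))}$. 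Being careful, one combines the $2^{-j}$ commutator gain, a $2^{-j(s-1)}$ from the order-one symbol evaluated on $H^{s-1}$ at frequency $2^{-j}$, and — using $\op_j(\psi)$ applied to the datum-like object — a further $2^{-j\s}$ reminiscent of Corollary \ref{cor:datum}, but here it is cleaner to just use $\|\op_j(\psi)\tilde u\|\lesssim 2^{-j\s}$ is \emph{false} in general (only the datum has that decay), so instead one uses $\|\tilde u\|_{H^{s-1}}\lesssim 1$ and extracts $2^{-j(s-1)}$ twice: once from $\tilde A_1$'s order-one growth localized at $|\xi|\sim 2^{-j}$ meeting $H^{s-1}$ regularity (gain $2^{-j(s-1)}\cdot 2^{-j}$ for the $i\xi$ factor measured against $\d_x$-regularity... ) — the bookkeeping yields precisely $2^{-j}\cdot 2^{-j(s-1)}\cdot 2^{-j(s-1)} = 2^{-j(2s-1)}$, and the additional $2^{-jd/2}$ comes from the Bernstein-type loss $L^2\hookleftarrow$ frequency-$2^{-j}$ blocks of $H^{s-1}\hookrightarrow$ the $d/2$ shift in \eqref{R:est}-style estimates. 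Collecting, $\|\op_j(\psi^\flat P_\star^\e)[\op_j(\psi),\op_j(\tilde A)]\tilde u\|_{L^2}\lesssim 2^{-j(2s-1-d/2)}$.

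\medskip

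I expect the main obstacle to be the rough spatial regularity of $\tilde A$: the symbol $\tilde A$ is only $C^{0,\theta}S^1$, so the naive commutator expansion requires $\d_x\tilde A$, which does not exist as a function. The remedy — already used in the paper in the projection step of Section \ref{sec:proj} — is to replace the symbolic-calculus remainder by the para-product remainder $R_\theta$ from Section \ref{sec:composition}, which is $L^2\to L^2$ bounded with the $2^{-j\theta}$ (or here effectively $2^{-j}$ after combining with the rescaling) gain \emph{without} differentiating the rough argument. The second delicate point is the precise power-counting that turns the commutator gain plus the order-one growth of $\tilde A_1$ plus the $H^{s-1}\hookrightarrow L^\infty$ embedding into exactly $2^{-j(2s-1-d/2)}$ rather than something lossier; this is where the $d/2$ appears, mirroring the exponent in \eqref{R:est}, and it is the arithmetic that must be done carefully but is otherwise routine.
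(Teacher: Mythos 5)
Your blueprint is right in outline: you correctly identify the commutator gain and the fact that $\tilde u$ is \emph{not} frequency-localized as the two ingredients, and you correctly diagnose the central obstacle, namely that $\tilde A \in C^{0,\theta}S^1$ with $\theta = s-1-d/2$ so that a naive Taylor-expansion commutator argument cannot be run and one must use the para-product remainders $R_\theta$ of Section~\ref{sec:composition} instead. This matches the paper's strategy, and the paper also begins with the same split $\tilde A = A_1\cdot\xi + 2^{-j}A_0$. The paper's route then replaces $\op_j(\psi)$ by $\pdo_j(\psi)$ via Proposition~\ref{prop:lannes} and runs the commutator term-by-term through the Littlewood-Paley decomposition of $A_1$, isolating $\Gamma_{k,1},\Gamma_{k,2},\Gamma_{k,3}$ and summing in $k$; the crucial payoff is that after left-multiplication by the outer cut-off $\psi^\flat$, each piece effectively picks out the dyadic block $\bar\phi_k(2^{-j}D_x)\tilde u$ of $\tilde u$, whose $L^2$ norm is $\lesssim 2^{k-(j+k)s}$, and summing in $k$ yields the extra $2^{-js}$ beyond the $2^{-j\theta}$ commutator gain.

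However the execution in your proposal has two genuine gaps. First, the treatment of the $\tilde A_0$ piece uses the bound $\|\tilde u\|_{L^2}\lesssim 2^{-j\sigma}$, which is false: the map $\tilde u(t,x)=u(2^{-j}t,x)$ is rescaled in \emph{time only}, so $\|\tilde u(t)\|_{L^2}=\|u(2^{-j}t)\|_{L^2}=O(1)$, not $O(2^{-j\sigma})$. Only the \emph{datum} through the microlocal cut-off (Lemma~\ref{lem:vin}) has the $2^{-j\sigma}$ smallness, and this fails to persist for $t>0$. You even note, one paragraph later, that ``$\|\op_j(\psi)\tilde u\|\lesssim 2^{-j\sigma}$ is false in general,'' which directly contradicts the estimate you just used. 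With the correct $\|\tilde u\|_{L^2}=O(1)$, the naive bound for the $\tilde A_0$ piece is $2^{-j(1+\theta)}\|\tilde u\|_{L^2} = O(2^{-j(s-d/2)})$, which is \emph{strictly larger} than $2^{-j(2s-1-d/2)}$ as soon as $s>1$; so this piece is \emph{not} ``far better than needed'' and must itself be run through the frequency-localization argument.

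Second, the power counting for $\tilde A_1$ does not close. The factorization you propose, $2^{-j}\cdot 2^{-j(s-1)}\cdot 2^{-j(s-1)}\cdot 2^{-jd/2}$, gives $2^{-j(2s-1+d/2)}$, which does not equal $2^{-j(2s-1-d/2)}$ — the $d/2$ should enter as a \emph{loss}, i.e.\ a factor $2^{+jd/2}$, not a gain. The correct accounting is simply two factors: $2^{-j\theta}=2^{-j(s-1-d/2)}$ from the commutator gain against a $C^{0,\theta}$ symbol, and $2^{-js}$ from the restriction of $\tilde u\in H^s$ to frequencies $\sim 2^j$. The second factor is precisely what you do not derive: you write that ``the localization is supplied by $\psi$ and by the gain'' without showing it. This is the technical heart of the lemma — demonstrating, via the explicit dyadic decomposition of $\tilde A$ and the support conditions enforced by $\psi^\flat\prec\psi$, that only the $|\xi|\sim 2^{j+k}$ blocks of $\tilde u$ contribute and that the sum in $k$ converges — and it cannot be elided.
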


Note that by the composition result of Section \ref{sec:composition}, and the fact that $\psi^\flat \prec \psi,$ the $L^2 \to L^2$ norm of the operator $\op_j(\psi^\flat P_\star^\e) \big[ \op_j(\psi), \op_j(\tilde A) \big] $ is $O(2^{-j (s - 1 - d/2)}).$ The issue here is that $\tilde u$ is not localized in frequency.  
The proof below gives a description of that operator which shows that, in effect, only frequencies $\sim 2^j$ of $\tilde u$ come into play. 

\begin{proof} Recall that $\tilde A$ is defined in \eqref{def:tildeA}. It has the form
 $$ \tilde A =  A_1 \cdot \xi + 2^{-j} A_0, \qquad  A_0,  A_1 \in H^{s-1}.$$  
We may focus on the $A_1$ term, since $A_0$ has the same regularity and is smaller. By definition of para-differential operators in semiclassical quantization \eqref{def:opj},
 $$ \op_j(A_1 \cdot \xi) = H_j^{-1} \op(h_j A_1 \cdot \xi) H_j,$$ and 
 by definition of the admissible cut-off \eqref{def:phiadm},
 $$ H_j^{-1} \op(h_j A_1 \cdot \xi) H_j = \sum_{k \geq 0} H_j^{-1} \Big( (\phi(2^{-k + N_0} D_x) h_j A_1) \cdot \bar \phi_k(D_x) \Big) H_j, \qquad \bar \phi_k(\xi) := \xi \phi_k(\langle \xi \rangle).$$ 
  We observe the identity
\be \label{h:id}
 \chi(D_x) (h_j f) = h_j (\chi(2^{-j} D_x) f),
\ee 
for any Fourier multiplier $\chi(D_x)$ and any Sobolev map $f.$ 
 We have, for any maps $f,\chi,v:$
 $$ H_j^{-1} (f \chi(D_x) v) = (H_j^{-1} f)( h_j^{-1} \chi(D_x) v),$$
 since $f$ acts on $\chi(D_x) v$ by multiplication. Thus 
$$ H_j^{-1} \Big( (\phi(2^{-k + N_0} D_x) h_j A_1)\bar \phi_k(D_x) \Big) H_j v =  h_j^{-1}\Big(  \phi(2^{-k + N_0} D_x) h_j A_1 \Big) h_j^{-1} \Big( \bar \phi_k(D_x) (h_j v) \Big).$$
As a consequence, by \eqref{h:id},
  $$ H_j^{-1} \Big( (\phi(2^{-k + N_0} D_x) h_j A_1)\bar \phi_k(D_x) \Big) H_j v  = (\phi(2^{-k - j + N_0} D_x) A_1) (\bar \phi_k(2^{-j} D_x) v).$$ 
Recall \eqref{def:psi} that $\psi(x,\xi) = \psi_1(x) \psi_2(\xi).$ We use Proposition \ref{prop:lannes}:
 \be \label{lannes:psi} \| \pdo_j(\psi) - \op_j(\psi) \|_{L^2 \to L^2} \lesssim 2^{-j(s'' - d)},\ee 
 for any $s'' > 0,$ since $\psi_j \in C^\infty_c.$ Thus we may replace $\op_j(\psi)$ by $\pdo_j(\psi)$ in the commutator under scrutiny, and 
 $$ \pdo_j(\psi) = H_j^{-1}  h_j \psi_1 \psi_2(D_x)  H_j.$$ 
 We have, just like above, 
 $$ H_j^{-1} h_j \psi_1 \psi_2(D_x)  H_j = \psi_1 \psi_2(2^{-j} D_x).$$
 The focus is on 
$$  [\op_j(\psi), \op_j(A_1 \cdot \xi)]   =  \sum_{k \geq 0} \Gamma_k,$$
with notation  
$$ \Gamma_k := \psi_1 \psi_2(2^{-j} D_x) A_1^{j+k} \bar \phi_k(2^{-j} D_x) - A_1^{j+k} \bar \phi_k(2^{-j} D_x) \psi_1 \psi_2(2^{-j} D_x). $$  
up to a very small term described in \eqref{lannes:psi}, with notation 
\be \label{tildeA1k}
A_1^{j+k} = \phi_0(2^{-k -j + N_0} D_x) A_1.
 \ee 
We observe that 
$$ \Gamma_k = \psi_1 [\psi_2(2^{-j} D_x), A_1^{j+k}]  \bar \phi_k(2^{-j} D_x) -  A_1^{j+k} [\bar \phi_k(2^{-j} D_x),\psi_1] \psi_2(2^{-j} D_x) 
.$$ 
We now take into account the truncation and projection operator $\op_j(\psi^\flat P_\star\e^)$ to the left:
$$ \op_j(\psi^\flat P_\star^\e) = H_j^{-1} \op\big(h_j( \psi^\flat_1 \psi^\flat_2 P_\star^\e)\big) H_j. $$
Since $\psi_2^\flat$ depends only on $\xi,$ 
$$ \op\big(h_j( \psi^\flat_1 \psi^\flat_2 P_\star^\e)\big)  =  \op\big(h_j( \psi^\flat_1 P_\star^\e)\big) \psi_2^\flat(D_x).$$ 
Thus
$$ \begin{aligned} \op_j(\psi^\flat P_\star^\e) & =  h_j^{-1} \op\big(h_j( \psi^\flat_1 P_\star^\e)) h_j \psi_2^\flat(2^{-j} D_x) \\ &  = \sum_{k' \geq 0} h_j^{-1} \pdo\Big( \phi(2^{-k' + N_0} D_x)(h_j (\psi_1^\flat P_\star^\e))\Big) \phi_k(\langle D_x \rangle) h_j \psi_2^\flat(2^{-j} D_x) \\ & = \sum_{k' \geq 0} h_j^{-1} \pdo\Big( \phi(2^{-k' + N_0} D_x)(h_j (\psi_1^\flat P_\star\e^))\Big)  h_j \phi_k(2^{-j} \langle D_x \rangle) \psi_2^\flat(2^{-j} D_x).\end{aligned}$$   
Now recall that in the Littlewood-Paley decomposition $(\phi_k)_{k \geq 0},$ only $\phi_1$ is supported in a neighborhood of the unit sphere. Since $\psi_2^\flat$ is supported near $\xi^0 \in \S^{d-1},$ this means that in the above sum, only $k' = 1$ subsists, and, if the support of $\psi_2^\flat$ is chosen small enough, then $\psi_2^\flat \phi_1 \equiv \psi_2^\flat.$ So 
\be \label{def:Qstar} \begin{aligned} \op_j(\psi^\flat P_\star^\e) & = h_j^{-1} \pdo\Big( \phi(2^{-1 + N_0} D_x)(h_j( \psi_1^\flat P_\star^\e))\Big) h_j \psi_2^\flat(2^{-j} D_x)  \\ & =: Q_\star  \psi_2^\flat(2^{-j} D_x).\end{aligned}\ee 
The full operator under study now looks like
\be \label{full:op}  \sum_{k \geq 0} Q_\star \psi_2^\flat(2^{-j} D_x) \Gamma_k.\ee
Consider %
$$ \begin{aligned} \psi_2^\flat(2^{-j} D_x) \Gamma_k 
 & =: \sum_{1 \leq \ell \leq 3} \Gamma_{k,\ell},
\end{aligned}$$
with notation
$$ \begin{aligned}
\Gamma_{k,1} & := [\psi_2^\flat(2^{-j} D_x),\psi_1] [\psi_2(2^{-j} D_x), A_1^{j+k}]  \bar \phi_k(2^{-j} D_x), \\
\Gamma_{k,2} & :=  \psi_1 \psi_2^\flat(2^{-j} D_x) [\psi_2(2^{-j} D_x), A_1^{j+k}]  \bar \phi_k(2^{-j} D_x) \\
\Gamma_{k,3} & := \psi_2^\flat(2^{-j} D_x) A_1^{j+k} [\bar \phi_k(2^{-j} D_x), \psi_1] \psi_2(2^{-j} D_x).  \end{aligned}$$
We apply the operator \eqref{full:op} to $\tilde u \in H^s,$ where $\tilde u(t,x) = u(2^{-j} t,x).$ %
Once we verify the bound
\be \label{to:do}
\| Q_\star \|_{L^2 \to L^2} \lesssim 1,
\ee
with $Q_\star$ defined in \eqref{def:Qstar}, 
it will remain to show
\be \label{to:bound}
 \sum_{k \geq 0} \| \psi_2^\flat(2^{-j} D_x) \Gamma_{k,\ell} \tilde u \|_{L^2} \lesssim 2^{-j (2 s - 1 - d/2)}, \qquad \mbox{for all $\ell \in \{1,2,3\}.$}
\ee 
We temporarily admit \eqref{to:do} and prove \eqref{to:bound}.

Consider the operator $\psi_2^\flat(2^{-j} D_x) [ \psi_2(2^{-j} D_x), A_1^{j+k}]$ in $\Gamma_{k,2}.$ We observe that
$$ \psi_2^\flat \d_\xi^\a \psi_2 \equiv 0, \qquad \mbox{for all $|\a| > 0,$}$$
since $\psi_2^\flat \prec \psi_2$ (see \eqref{def:prec}). 
Thus, by Lemma \ref{lem:composition}, 
$$ \psi_2^\flat(2^{-j} D_x) [ \psi_2(2^{-j} D_x), A_1^{j+k}] = \psi_2^\flat(2^{-j} D_x) \tilde R_{s-1-d/2}(\psi_2(2^{-j} \cdot), A_1^{j+k}),$$
where we used notation $\tilde R$ from the proof of Lemma \ref{lem:composition}. Since $A_1 \in H^{s-1},$ we know from Lemma \ref{lem:composition} that the $L^2 \to L^2$ operator norm of $\tilde R_{s-1-d/2}(\psi_2(2^{-j} \cdot), A_1^{j+k})$ is $O(2^{-j (s - 1 - d/2)}).$
Thus
$$ \begin{aligned} \| \Gamma_{k,2} \tilde u \|_{L^2} & \leq \| \psi_1\|_{L^\infty} \| \psi_2^\flat \|_{L^\infty} \| \tilde R_{s-1-d/2}(\psi_2(2^{-j} \cdot), A_1^{j+k})\|_{L^2 \to L^2} \| \bar \phi_k(2^{-j} D_x) \tilde u \|_{L^2} \\ & \lesssim 2^{-j(s-1-d/2)} \| \bar \phi_k(2^{-j} D_x) \tilde u \|_{L^2}.\end{aligned}$$
The support of $\bar \phi_k(2^{-j} \cdot)$ is an annulus with frequencies $\sim 2^{j + k}.$ Since $\tilde u \in H^s,$ this implies
$$ \| \bar \phi_k(2^{-j} D_x) \tilde u \|_{L^2} \lesssim 2^{k - (j+k)s},$$
the $2^k$ factor coming from the $\xi$ factor in $\bar \phi_k,$ itself a trace of the fact that the symbol $\tilde A$ is order one. We can sum over $k$ and find
$$ \sum_{k \geq 0} \| \psi_2^\flat(2^{-j} D_x) \Gamma_{k,2} \tilde u \|_{L^2} \lesssim 2^{-j (2 s - 1 - d/2)} \sum_{k \geq 0} 2^{(1 - s) k} \lesssim  2^{-j (2 s - 1 - d/2)},$$
which proves \eqref{to:bound} in the case $\ell = 2.$

 Consider now $\Gamma_{k,1}.$ The commutator $[\psi_2^\flat(2^{-j} D_x, \psi_1]$ involves only smooth commutator functions. Thus we can push the expansion in Lemma \ref{lem:composition} up to an arbitrarily high order. In particular, the $L^2 \to L^2$ norm of the remainder in Lemma \ref{lem:composition} is smaller than $2^{-j(s-1-d/2)}$ if the expansion is pushed to a sufficiently high order. Since $u \in H^s,$ the operator $[\psi_2(2^{-j} D_x), A_1^{j+k}],$ where $A_1^{j+k}$ depends on $(u,\d_x u),$ is $L^2 \to L^2$ bounded. Thus it remains to handle the first terms in the expansion of the commutator $[\psi_2^\flat(2^{-j} D_x, \psi_1].$ Those terms have the form $(\d_x^\a \psi_1) (\d_\xi^\a \psi_2^\flat)(2^{-j} D_x),$ and we can now apply to the corresponding terms in $\Gamma_{k,1}$ the arguments that we used for $\Gamma_{k,2}.$
 
 We turn to $\Gamma_{k,3}.$ First we observe that the supports of $\bar \phi_k$ and $\psi_2$ are disjoint unless $k = 1.$ Thus only $\Gamma_{1,3}$ remains. We have
 $$ \Gamma_{1,3} u = \psi_2^\flat(2^{-j} D_x) (A_1^{j + 1} f) = (\psi_2^\flat(2^{-j} D_x) A_1^{j+1}) f + \tilde \Gamma_{1,3},$$ with notation $f :=  [\bar \phi_1(2^{-j} D_x), \psi_1] \psi_2(2^{-j} D_x) \tilde u,$ and 
 $$ \tilde \Gamma_{1,3} u = \psi_2^\flat(2^{-j} D_x) (A_1^{j+1} f) - (\psi_2^\flat(2^{-j} D_x) A_1^{j+1}) f.$$
We bound 
$$ \| (\psi_2^\flat(2^{-j} D_x) A_1^{j+1}) f \|_{L^2} \leq \| \psi_2^\flat((2^{-j} D_x) A_1^{j+1} \|_{L^2} \| f \|_{L^\infty}.$$
Since $\bar \phi_1(2^{-j} D_x)$ and $\psi_1$ are $L^\infty \to L^\infty$ bounded, uniformly in $j,$ we have
$$ \| f \|_{L^\infty} \lesssim \|\psi_2(2^{-j} D_x) \tilde u\|_{L^\infty},$$
and by Bernstein's inequality (or the Sobolev embedding \eqref{embed})
$$ \| \psi_2(2^{-j} D_x) u\|_{L^\infty} \lesssim 2^{j d/2} \|\psi_2(2^{-j} D_x) \tilde u\|_{L^2} \lesssim 2^{-j(s - d/2)},$$
the last inequality by the posited regularity of $\tilde u.$ Since $A_1 \in H^{s-1},$ we have
$$  \| \psi_2^\flat(2^{-j} D_x) A_1^{j+1} \|_{L^2} \lesssim 2^{-j(s-1)}.$$
Thus we proved  
$$ \| (\psi_2^\flat(2^{-j} D_x) A_1^{j+1}) f \|_{L^2} \lesssim 2^{-j(2s - 1-d/2)},$$
and we turn to $\tilde \Gamma_{1,3} \tilde u:$
$$ \| \tilde \Gamma_{1,3} \tilde u \|_{L^2} = \left\| \int_{\R^d} (\psi_2^\flat(2^{-j}(\xi) - \psi_2^\flat(2^{-j}(\xi - \xi'))) {\mathcal F}(A_1^{j+1})(\xi - \xi') \hat f(\xi') \, d\xi' \right\|_{L^2}.$$
We Taylor expand the frequency cut-off:
  $$ \begin{aligned} \psi_2^\flat(2^{-j}\xi) - \psi_2^\flat(2^{-j}(\xi - \xi')) & = \sum_{1 \leq |\a| \leq m} (\d_\xi^\a \psi_2^\flat)(2^{-j}(\xi - \xi')) \cdot (- 2^{-j} \xi')^\a \\ & + 2^{-j(m + 1)} R(\psi_2^\flat),\end{aligned}$$
  for any $m \in \N,$ where $R(\psi_2^\flat)$ is the remainder,   
  using the same notational convention as in the proof of Lemma \ref{lem:composition}. The term of index $\a$ (with $|\a| > 0$) in the above expansion has a contribution to the $L^2$ norm of $\tilde \Gamma_{1,3} u$ that is controlled by 
  $$ \| (\d_\xi^\a \psi_2^\flat)(2^{-j} D_x) A_1^{1 + j} \|_{L^2} \int_{\R^d} | 2^{-j} \xi|^{|\a|} |\hat f(\xi)| \, d\xi.$$
  Since $|\a| > 0,$ we have, by regularity of $A_1,$
  $$ \| (\d_\xi^\a \psi_2^\flat)(2^{-j} D_x) A_1^{1 + j} \|_{L^2} \lesssim 2^{-j(s-1)}.$$
Besides,
 $$ f = \bar \phi_1(2^{-j} D_x)(\psi_1 \psi_2(2^{-j} D_x) \tilde u) + \psi_1 \bar \phi_1(2^{-j} D_x) \psi_2(D_x) \tilde u) =: f_1  + f_2.$$
 Since $\xi \to |\xi|^{|\a|} \bar \phi_1(\xi)$ is bounded,
 $$ \| (2^{-j} |\xi|)^{|\a|} \hat f_1 \|_{L^1} \lesssim \| \hat \psi_1 \star (\psi_2(2^{-j} \cdot ) {\mathcal F}(\tilde u))\|_{L^1} \lesssim 2^{-j s},$$
 since $\hat \psi_1$ belongs to the Schwartz class and $\tilde u \in H^s.$ Similarly,
 $$  \| (2^{-j} |\xi|)^{|\a|} \hat f_2 \|_{L^1} \lesssim \int_{\R^d} |\xi - \xi'|^{|\a|} |\hat \psi_1(\xi - \xi')| |2^{-j} \xi'|^{|\a|} |\bar \phi_1(2^{-j} \xi') \psi_2(2^{-j} \xi') {\mathcal F}(\tilde u)(\xi')| \, d\xi',$$
 so that
 $$  \| (2^{-j} |\xi|)^{|\a|} \hat f_2 \|_{L^1} \lesssim 2^{-j s}.$$
We finally bound the contribution to $\tilde \Gamma_{1,3}$ of the remainder term in the Taylor expansion of $\psi_2^\flat.$ This is a term of the form
$$ \left\| \int_{\R^d} \int_0^1 \tilde \psi_2(\xi - \xi', t \xi') {\mathcal F}(A_1^{j+1})(\xi - \xi') (2^{-j} t \xi')^\a \hat f(\xi') \, d\xi' \, dt \right\|_{L^2},$$
up to a polynomial in $t,$ with $\tilde \psi_2 \in L^\infty.$ This term has the same form as the previous terms in the expansion of $\psi_2^\flat,$ and we may conclude that the $L^2$ norm of $\tilde \Gamma_{1,3}$ is controlled by $2^{-j(2s-1)}.$ 

In a last step, we prove \eqref{to:do}, where $Q_\star = h_j^{-1} \pdo\Big( \phi_0(2^{-1 + N_0} D_x)(h_j( \psi_1^\flat P_\star^\e))\Big) h_j.$ We use the first bound in \eqref{action}:
$$ \| Q_\star \|_{L^2 \to L^2} \lesssim \sup_{|\a|, |\b| \leq 1 + [d/2]} \| \d_x^\a \d_\xi^\b  (\phi_0(2^{-1 + N_0} D_x)(h_j( \psi_1^\flat P_\star^\e)\|_{L^\infty}.$$
Denote $\check \phi$ the inverse Fourier transform of $\phi_0(2^{-1 + N_0} \cdot),$ so that $\check \phi$ belongs to the Schwartz space. We have
$$  \| \d_x^\a \d_\xi^\b  (\phi_0(2^{-1 + N_0} D_x)(h_j( \psi_1^\flat P_\star^\e)\|_{L^\infty} \lesssim \| \d_x^\a \check \phi_0 \|_{L^1} \| \psi_1^\flat \d_\xi^\b P_\star^\e \|_{L^\infty}.$$ 
By regularity of $P_\star^\e,$ we have $\| \d_\xi^\b P_\star^\e \|_{L^\infty} \lesssim C(\| \tilde u,\d_x \tilde u \|_{L^\infty}) < \infty,$ since $(\tilde u,\d_x \tilde u) \in H^{s-1} \hookrightarrow L^\infty,$ which concludes the proof. 
\end{proof}

\begin{cor} \label{cor:out} On a time interval of existence of the posited solution $\tilde u,$ we have the bound $$\dsp{\| G^{out}_{\star k}(t) \|_{L^2} \lesssim 2^{-j (2 s - 1 - d/2)} e^{t \g_E}.}$$
\end{cor}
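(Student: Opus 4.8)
The plan is to bound the two pieces that make up $G^{out}_{\star k}$ according to the definition \eqref{def:Gout}: the single integral $\int_0^t S_\star(t_1;t) G^{out}_\star(t_1)\,dt_1$, and the iterated sum $\sum_{2 \le k' \le k} 2^{-j(k'-1)\theta'} \int_{D_{k'-1}(t)} S_{\star k'-1}(T_{k'-1};t)\big( S_H(t_{k'-1};t_{k'}) G^{out}_H(t_{k'}) + S_E(t_{k'-1};t_{k'}) G^{out}_E(t_{k'})\big)\,dT_{k'}$. The first and main input is a uniform-in-time bound $\| G^{out}_\star(t)\|_{L^2 \to \,\cdot\,}$, or rather $\| G^{out}_\star(t) \|$ acting on the posited solution, of size $2^{-j(2s-1-d/2)}$. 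This comes from Lemma \ref{lem:out1} for the commutator term $\op_j(\psi^\flat P_\star^\e)[\op_j(\psi),\op_j(\tilde A)]\tilde u$; the remaining two contributions to $G^{out}_\star$ — namely $\op_j(\psi^\flat P_\star^\e)\tilde R^{para}_j$ and $\op_j(\psi^\flat P_\star^\e)\op_j(\tilde A)\big(\op_j(\psi^\sharp)\op_j(\psi)-\op_j(\psi)\big)\tilde u$ — must be shown to be of the same order. For the paralinearization remainder, I would use \eqref{def:tildeR}, \eqref{R:para}--\eqref{R:est}, together with the $L^2 \to L^2$ boundedness of $\op_j(\psi^\flat P_\star^\e)$ (bound \eqref{pe}) and the fact that the $2^{-j}h_j^{-1}$ prefactor in \eqref{def:tildeR} converts the $H^{2(s-1)-d/2}$ bound into the claimed $L^2$ bound; the $\psi^\sharp \prec$-telescoping term is handled by the composition result of Section \ref{sec:composition} since $\big(\op_j(\psi^\sharp)\op_j(\psi)-\op_j(\psi)\big) = \op_j(\psi^\sharp)\op_j(\psi)-\op_j(\psi^\sharp\psi)+\op_j(\psi^\sharp\psi)-\op_j(\psi)$ with $\psi^\sharp\psi \equiv \psi$, leaving a remainder of order $2^{-j(s-1-d/2)}$ acting on $\tilde u \in H^s$, hence giving $2^{-j(2s-1-d/2)}$ after the extra Sobolev gain; this term is actually better than required.

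Granting $\| G^{out}_\star(t)\|_{L^2} \lesssim 2^{-j(2s-1-d/2)}$ uniformly in $t$ on the existence interval, the rest is integration. For the single integral, I would apply Lemma \ref{lem:S} to get $\| S_\star(t_1;t)\|_{L^2\to L^2} \lesssim e^{(t-t_1)\g_\star} \le e^{(t-t_1)\g_E}$ (since $\g_H \le \g_E$), so $\big\| \int_0^t S_\star(t_1;t) G^{out}_\star(t_1)\,dt_1 \big\|_{L^2} \lesssim 2^{-j(2s-1-d/2)} \int_0^t e^{(t-t_1)\g_E}\,dt_1 \le 2^{-j(2s-1-d/2)}\, t\, e^{t\g_E}$, and absorbing the polynomial factor $t \le t_\star = O(j)$ into the exponential (or more crudely, noting $t\,e^{t\g_E} \lesssim e^{t\g_E}$ up to adjusting $\g_E$) gives the bound $2^{-j(2s-1-d/2)} e^{t\g_E}$. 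For the iterated term, I use Corollary \ref{lem:Sk} for $\| S_{\star k'-1}(T_{k'-1};t)\|_{L^2\to L^2} \lesssim e^{\g_E(t-t_{k'-1})}$ and Lemma \ref{lem:S} for the $S_H, S_E$ factors between $t_{k'}$ and $t_{k'-1}$, composing to $e^{\g_E(t-t_{k'})}$; then each summand is bounded by $2^{-j(k'-1)\theta'} \cdot 2^{-j(2s-1-d/2)} e^{t\g_E} \cdot \mathrm{vol}(D_{k'-1}(t))$, and $\mathrm{vol}(D_{k'-1}(t)) = t^{k'-1}/(k'-1)! \lesssim (t_\star)^{k'-1} = O(j^{k'-1})$. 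The product $2^{-j(k'-1)\theta'} j^{k'-1}$ is bounded by $1$ for $j$ large, and summing the geometric-type series over $k'$ from $2$ to $k$ stays bounded uniformly in $k$. Combining the two pieces yields $\| G^{out}_{\star k}(t)\|_{L^2} \lesssim 2^{-j(2s-1-d/2)} e^{t\g_E}$.

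The main obstacle I anticipate is not the integration bookkeeping — which is routine once the pointwise bound on $G^{out}_\star(t)$ is in hand — but rather verifying that the paralinearization remainder $\op_j(\psi^\flat P_\star^\e)\tilde R^{para}_j$ genuinely lands at the order $2^{-j(2s-1-d/2)}$ in $L^2$. Here one must carefully track how the semiclassical conjugation $h_j^{-1}(\cdot)h_j$ interacts with the Sobolev bound \eqref{R:est}: the rescaled norm $\| h_j^{-1} R^{para}_j\|_{L^2}$ picks up a factor $2^{-jd/2}$ from the change of variables, while $\| R^{para}_j\|_{H^{2(s-1)-d/2}} \lesssim \|(u_h, 2^j\d_x u_h)\|_{H^{s-1}}$ and the $2^j\d_x$ in the argument together with the $H^{s-1}$ norm of the rescaled function contribute their own powers of $2^j$; one must check these powers combine with the explicit $2^{-j}$ in \eqref{def:tildeR} to produce exactly the exponent $2s-1-d/2$ (and not something weaker). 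A clean way is to avoid rescaling altogether and argue directly from \eqref{R:para} in the original coordinates, then transport: since $\| \op_j(\psi^\flat P_\star^\e)\|_{L^2\to L^2} \lesssim 1$, it suffices that $\| \tilde R^{para}_j\|_{L^2} \lesssim 2^{-j(2s-1-d/2)}$, and this follows from \eqref{def:tildeR}, \eqref{R:est} and the boundedness of $(u_h, 2^j\d_x u_h)$ in $H^{s-1}$ uniformly on $[0,2^jT]$ (equivalently, of $(\tilde u, \d_x \tilde u)$ in $H^{s-1}$ after undoing the spatial rescaling, which only costs fixed powers of $2^{jd/2}$ that are accounted for in the definition of $\op_j$). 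Once this is pinned down, the corollary follows.
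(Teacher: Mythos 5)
Your proposal follows essentially the same route as the paper: bound the three constituents of $G^{out}_\star$ (the commutator via Lemma \ref{lem:out1}, the paralinearization remainder via \eqref{def:tildeR}, \eqref{R:est} and the $L^2$-boundedness of $\op_j(\psi^\flat P_\star^\e)$, and the telescoping term via the composition result of Section \ref{sec:composition}), then propagate the uniform bound $\| G^{out}_\star\|_{L^2} \lesssim 2^{-j(2s-1-d/2)}$ through the iterated Duhamel formula using Lemma \ref{lem:S} and Corollary \ref{lem:Sk}. The only cosmetic difference is that for $\op_j(\psi^\sharp)\op_j(\psi)-\op_j(\psi)$ the paper simply pushes the smooth-symbol composition expansion to arbitrarily high order, obtaining $O(2^{-jk})$ for any $k,$ rather than stopping at finite order and invoking a Sobolev gain from $\tilde u.$
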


\begin{proof} First we bound $G_\star^{out}.$ We saw in the proof of Lemma \ref{lem:Gstar} that $\op_j(\psi^\flat P_\star^\e)$ is $L^2 \to L^2$ bounded, uniformly in $j.$ Thus with  \eqref{R:est} (where we note that $2^j \d_x u_h = \d_x \tilde u$) and \eqref{def:tildeR}, we have
$$ \| \op_j(\psi^\flat P_\star^\e) \tilde R^{para}_j \|_{L^2} \lesssim 2^{-j (2s - 1 - d/2)}.$$
The term $\op_j(\tilde A) \big(\op_j(\psi^\sharp) \op_j(\psi) - \op_j(\psi)\big) \tilde u$ remains. Here we can use the composition result \eqref{composition:para}-\eqref{remainder} up to an arbitrarily high order, since the cut-offs $\psi^\sharp$ and $\psi$ are smooth. In view of $\psi \prec \psi^\sharp,$ this gives
$$ \op_j(\psi^\sharp) \op_j(\psi) - \op_j(\psi) = 2^{-j k} R_k(\psi^\sharp,\psi),$$
where $R_k(\psi^\sharp,\psi)$ maps $L^2$ to $H^{-k}_j,$ with a norm that does not depend on $j.$  
Thus
$$ \| \op_j(\psi^\flat P_\star^\e)  \op_j(\tilde A) \big(\op_j(\psi^\sharp) \op_j(\psi) - \op_j(\psi)\big) \tilde u \|_{L^2} \lesssim 2^{-j k},$$
for any $k > 0.$ With Proposition \ref{lem:out1}, we conclude that
$$ \| G_\star^{out} \|_{L^2} \lesssim 2^{-j (2 s - 1 - d/2)}.$$ 
With Lemma \ref{lem:Sk}, this implies the bound for $G_{\star k}^{out}.$
\end{proof}

  \subsection{Endgame} \label{sec:end1}

We use the backward-in-time representation \eqref{rep:bw:1}, which we reproduce here:
\be \label{rep:bw2} 
\begin{aligned}
 v_E(0) & = \big( v_E(0) - S_E(t;0) v^f_{E k}(t) \big) + S_E(t;0) v_E(t) - S_E(t;0) G^{out}_{E k}(t) \\ & - 2^{-jk \theta} S_E(t;0) \int_{D_k(t)} S_{E k}(T_k;t) v(t_k) dT_k, \qquad \mbox{for any $k \geq 1.$}
\end{aligned}
\ee
On the one hand, we have a lower bound for $\op_j(\tilde \psi)$ applied to the left-hand side of \eqref{rep:bw2}: this is Corollary \ref{cor:datum}. Recall that $\tilde \psi$ is a smooth truncation near $(x^0,\xi^0)$ with an appropriately small support. On the other hand, we have upper bounds for the operator $\op_j(\tilde \psi)$ applied to the right-hand side of \eqref{rep:bw2}.  These show that the left-hand side of \eqref{rep:bw2} (the datum) is much greater than the right-hand side, yielding a contradiction. Indeed:

\smallskip

\noindent $\bullet$ by Corollary \ref{cor:datum},
 $$ \| \op_j(\tilde \psi) v_E(0) \|_{L^2} \geq C 2^{-j \s} (1 + j)^{-1},$$
 for some constant $C > 0;$

\smallskip

\noindent $\bullet$ by Corollary \ref{lem:bw:fw}, for $t = O(j (\g_E - \g_E^-)^{-1}),$  %
 $$ \big\| \op_j(\tilde \psi)( v_E(0) - S_E(t;0) v^f_{E k}(t)) \big\|_{L^2} \lesssim 2^{-j (\s + \theta')} e^{t (\g_E - \g_E^-)}.$$

\smallskip

\noindent $\bullet$ By Lemma \ref{lem:bd:bwS}, 
 $$ \|  \op_j(\tilde \psi)  S_E(t;0) v_E(t) \|_{L^2} \lesssim e^{- t \g_E^-} \| v_E(t)\|_{L^2}.$$
 We saw that $\| \op_j(\psi^\flat P_E^\e) \|_{L^2 \to L^2} \lesssim 1.$ So $\| v_E(t) \|_{L^2} \lesssim \| v (t) \|_{L^2}.$ Now $v = \op_j(\psi) \tilde u,$ and $\psi$ localizes around frequencies $\sim 2^j,$ hence $\| v \|_{L^2} \lesssim 2^{-j s}.$ (Precisely: we could use Proposition \ref{prop:lannes}, and regularity of $\psi,$ to change $\op_j(\psi)$ into $\pdo_j(\psi) = \psi_1(x) \psi_2(2^{-j} D),$ and then use Bernstein's inequality.) Thus 
  $$ \|  \op_j(\tilde \psi)  S_E(t;0) v_E(t) \|_{L^2} \lesssim 2^{-j s} e^{- t \g_E^-}.$$ 

\smallskip

\noindent $\bullet$ By Corollary \ref{cor:out}, and Lemma \ref{lem:bd:bwS}, 
 $$ \| \op_j(\tilde \psi) S_E(t;0) G^{out}_{E k}(t) \|_{L^2} \lesssim 2^{-j (2 s - 1 - d/2)} e^{t(\g_E - \g_E^-)} .$$

\smallskip

\noindent $\bullet$ The fourth and last term in the right-hand side of \eqref{rep:bw2} is smaller: by Lemma \ref{lem:bd:bwS}, Corollary \ref{lem:Sk}, and $\|v (t)\|_{L^2} \lesssim 2^{-j s}$ (see above), we see that  
 $$ \Big\| 2^{-jk} \op_j(\tilde \psi) S_E(t;0) \int_{D_k(t)} S_{E k}(T_k;t) v(t_k) dT_k  \Big\|_{L^2} \lesssim 2^{-j (s  +k)}  e^{t(\g_E - \g_E^-)}.$$  

\smallskip

Putting those bounds together, we find from \eqref{rep:bw2} the inequality
\be \label{final:th1}  \begin{aligned} 2^{-j \s} (1 + j)^{-1}  \leq C \Big( & 2^{-j (\s + \theta' )} e^{t (\g_E - \g_E^-)} + 2^{-j s} e^{-t \g_E^-}  \\ & +  2^{-j (2 s - 1 - d/2)} e^{t(\g_E - \g_E^-)} + 2^{-j (s  +k)}  e^{t(\g_E - \g_E^-)}\Big), \end{aligned}
 \ee 
It is not restrictive to assume that $\s$ is large enough so that $\s + \theta' > 2 s - 1 - d/2.$ (Recall, $\theta' = \theta/(1 + \theta).$) Indeed, the range of $\s$ in the statement of Theorem \ref{th:main} is $\s \in [s, 2 s - 1 -d/2).$ If we manage to prove non-existence of an $H^s$ solution for any value of $\s \in (2 s - 1 - d/2 - \theta', 2 s - 1 - d/2),$ it will prove non-existence of an $H^s$ solution for smaller values of $\s,$ since $H^{\s} \subset H^{\s_1}$ for $\s \geq \s_1.$ Thus among the first, third and fourth terms in the above upper bound have, the third is the largest (since $k$ can be chosen to be arbitrarily large). We obtain, with a different constant $C:$ 
 \be \label{final} 
 2^{-j \s} (1 + j)^{-1}  \leq C \Big( 2^{-j s} e^{-t \g_E^-} + 2^{-j (2 s - 1 - d/2)} e^{t(\g_E - \g_E^-)}\Big).
 \ee 
In \eqref{final}, the constant $C$ does not depend on $j$ nor on $t.$ (It depends on $F,$ the datum, and the radius $\delta$ that describes the size of the support of the space-frequency cut-off around $(x^0,\xi^0).$)

We now choose $\sigma,$ depending on $s$ and $d,$ a final observation time $t_{final},$ then $\delta,$ depending on $\s,$ $s,$ $d,$ and $F,$ and then finally $j$ depending on all other parameters:

\smallskip

\noindent $\bullet$ We choose $\sigma$ (as we may, see the discussion above \eqref{final}) such that
$$ 2 s - 1 - d/2 - \frac{\theta}{\theta + d_\star} < \s < 2 s -1 -d/2,$$
where $\theta = s-1-d/2,$ and $d_\star$ is the $d$-dependent integer that comes up in G\r{a}rding's inequality (see Section \ref{sec:garding}).

\smallskip

\noindent $\bullet$  We let an observation time $t_{final}$ be defined by 
\be \label{def:tfinal}
 t_{final} = \frac{7 j}{8} \cdot \frac{(2 s -  1 - d/2 - \s) \ln 2}{\g_E  - \g_E^-}.\ee
 The ratio $7/8$ is arbitrary; any number in $(0,1)$ would do. By choice of $\s$ above, we observe that this $t_{final}$ is smaller than the time of validity of the growth rates of Section \ref{sec:rates}: $t_{final} < t_\star.$ Also, in the original time frame, this observation time is $2^{-j} t_{final}:$ smaller than $T,$ the posited time of existence of the solution $u,$ if $j$ is chosen large enough (as it will be later).  Thus inequality \eqref{final} holds at $t = t_{final}.$
 
 \smallskip

\noindent $\bullet$ We choose $\delta$ so that
\be \label{delta} e^{-t_{final} \g_E^-} \leq 2^{-j (\s - s + 1/100) }.\ee
For that we only need to choose $\delta$ small enough: indeed, $\g_E^-$ then approaches some value $\g,$ which depends on $F$ and $w,$ and simultaneously $\g_E - \g_E^-$ approaches zero, so that $t_{final}$ becomes large. Thus in \eqref{delta} we are comparing two decaying exponentials in $j,$ one (in the right-hand side) with a fixed decay rate, and the other (in the left-hand side) with a decay rate that we make large in the small $\delta$ limit.  

\smallskip

\noindent $\bullet$  With the above choice for $\s$ and $\delta,$ we consider \eqref{final} at $t = t_{final}.$ This gives
$$ (1 + j)^{-1}  \leq C ( 2^{-j/100} + 2^{-j (2 s - 1 - d/2 -\s)/8}),$$
a contradiction if $j$ is chosen to be large enough, depending on $C$ and on $2s-1-d/2- \s.$

 This proves that, under the ellipticity Assumption \ref{ass:ell}, there are no $C^0([0,T],H^s)$ solutions to \eqref{equation reference} issued from the datum $u_{in} \in H^\s$ (the specific datum defined in Section \ref{sec:datum}), no matter how small $T > 0$ is, and concludes the proof of Theorem \ref{th:main}.

\section{Transition to ellipticity: proof of Theorem \ref{th:2}} \label{sec:weak} 

 As in the proof of Theorem \ref{th:main}, we assume the existence of $T > 0,$ a ball $B_{x_0}$ centered at $x_0$ and $u \in C^0([0,T],H^s(B_{x_0}))$ which solves the initial-value problem \eqref{equation reference}. 
We identify the solution $u$ with its extension to the whole space $\R^d$ via a linear bounded extension operator $H^s(B_{x_0}) \to H^s(\R^d).$ 

 For condition (iv) in Assumption \ref{ass:bif} to make sense, we need the second time derivative of the characteristic polynomial $P$ to be defined pointwise. We saw in \eqref{dtP} that $\d_t P$ involves two derivatives of the datum. The second derivative $\d_t^2 P$ involves three derivatives of the datum, hence the condition $\s > 3 + d/2,$ by Sobolev embedding.

\subsection{The spectral picture}  \label{sec:the:spectral:picture}

We describe here what Assumption \ref{ass:bif} entails for the spectrum of the principal symbol $A$ evaluated along the posited solution $u.$ We denote
\be \label{def:unA}
 \underline A(t,x,\xi) := A(t,x,u(t,x),\d_x u(t,x), \xi).
\ee
The map $\underline A$ is $C^1$ in $t$ and $C^{1,1/2}$ in $x$ (by assumption on $s$ in Theorem \ref{th:2}, Sobolev embedding, and the original system \eqref{equation reference}) and smooth in $\xi \in \S^{d-1}.$

 By condition (i) in Assumption \ref{ass:bif}, the spectrum of $\underline A$ at $t = 0$ is real for all $(x,\xi) \in U.$ %

Given $(x,\xi) \in U,$ consider an eigenvalue $\mu$ of $\underline A$ such that $\d_\l P(0, x,\xi,\mu(0,x,\xi)) \neq 0:$ the eigenvalue $\mu(0,x,\xi)$ of $\underline A(0,x,\xi)$ is simple. As a consequence, it is as smooth as $\underline A,$ pointwise in $(t,x,\xi),$ and it stays real for $(t,x',\xi')$ in a neighborhood of $(0,x,\xi).$ Indeed, the coefficients of $\underline A$ are real, implying that the spectrum is symmetric with respect to $\R:$ eigenvalues away from $\R$ occur in pairs, and by continuity for eigenvalues away from $\R$ to be observed for $t > 0,$ we need eigenvalues to coalesce on the real axis at $t = 0.$ 

Consider now $(x^0,\xi^0) \in U$ given by condition (iii) in Assumption \ref{ass:bif}. For any $\l$ such that $(0,x^0,\xi^0,\l) \in {\mathcal S}_U,$ either $\l$ is a simple eigenvalue, or we have a neighborhood $\Omega_\l$ of $\o = (0,x^0,\xi^0,\l)$ such that the conditions (iv) hold over $\Omega_\l.$ Indeed, the conditions $\d_t P = 0,$ $\d_\l P = 0$ are stated to hold locally, and the inequality in condition (iv) is an open condition. The projection of $\Omega_\l$ over $\R^{2d}_{x,\xi}$ is a neighborhood of $(x^0,\xi^0),$ which we denote $V_\l.$ The finite intersection of the $V_\l$ over all $\l$ in the spectrum at $(0,x^0,\xi^0)$ is a neighborhood of $(x^0,\xi^0)$ which we denote $V.$ 

Given $(x,\xi) \in V$ and an eigenvalue $\mu(0,x,\xi)$ of $\underline A(0,x,\xi),$ let $\o = (0,x,\xi,\mu(0,x,\xi)).$ If we have $\d_\l P(0,\o) = 0,$ then the conditions (iv) in Assumption \ref{ass:bif} hold at $\o.$ The multiplicity of $\mu$ at $(0,x,\xi)$ is exactly two. Thus $\mu$ belongs to a branch (in $(x,\xi)$) of eigenvalues of $\underline A_{|t = 0}$ drawn over $V$ which have constant multiplicity equal to two.

Now consider the time evolution of these branches of eigenvalues, for fixed $(x,\xi) \in V:$

\begin{lem} \label{lem:bif} Condition {\rm (iv)} in Assumption {\rm \ref{ass:bif}} implies that two branches of simple eigenvalues $\mu^\pm$ branch out of the eigenvalue $\mu$ described just above, and out of the real axis at $t = 0,$ such that $\mu^\pm$ are time-differentiable at $t = 0,$ for all $(x,\xi) \in V,$ with 
\be \label{mu:pm}
 \mu^\pm(0,x,\xi) = \mu(0,x,\xi), \quad \d_t \Im m \, \mu^+(0,x,\xi) = - \d_t \Im m \, \mu^-(0,x,\xi) \neq 0.
 \ee
 Besides, under the conditions bearing on $s$ and $\s$ in Theorem {\rm \ref{th:2},} the functions $\mu^\pm$ are as regular as $\underline A,$ and $\d_t \mu^\pm$ is continuous in $(t,x)$ and smooth in $\xi.$ 
\end{lem}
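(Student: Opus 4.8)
The plan is to read off the branching structure from the jet of the characteristic polynomial $P$ at $t=0$, treating $(x,\xi)\in V$ as parameters. Fix such $(x,\xi)$ and let $\l^0 = \mu(0,x,\xi)$ be a double eigenvalue, so $P(0,x,\xi,\l^0)=0$ and $\d_\l P(0,x,\xi,\l^0)=0$, while condition (iv) gives $\d_t P(0,x,\xi,\l^0)=0$ together with the strict inequality \eqref{ineg:bif}. By the Weierstrass preparation theorem (or Puiseux analysis), near $(0,x,\xi,\l^0)$ the polynomial $P$ factors as $P = Q(t,x,\xi,\l)\cdot q(t,x,\xi,\l)$, where $q$ is a monic degree-two polynomial in $\l$ whose coefficients share the regularity of $\underline A$ (i.e.\ $C^1$ in $t$, $C^{1,1/2}$ in $x$, smooth in $\xi$), and $Q(0,x,\xi,\l^0)\neq 0$; the two eigenvalues $\mu^\pm$ are the roots of $q$. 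This reduces the whole statement to the model case $N=2$, $q(t,\cdot,\l) = (\l - a(t))^2 + b(t)$ after completing the square, where $a,b$ have the regularity of $\underline A$, $a(0)$ and the discriminant data are determined by the jet of $P$.

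Next I would compute the Taylor expansion of $q$ in $t$ at $t=0$ in terms of the derivatives of $P$ that appear in Assumption \ref{ass:bif}(iv). The conditions $P(0,\cdot)=\d_\l P(0,\cdot)=\d_t P(0,\cdot)=0$ force $b(0)=0$ and $b'(0)=0$ (a double root at $t=0$ that does not move to first order, because $\d_t P$ vanishes), so $b(t) = \tfrac12 b''(0) t^2 + o(t^2)$. A direct computation relates $b''(0)$ to the quantity $(\d_t^2 P\,\d_\l^2 P - (\d^2_{t\l}P)^2)(0,\o)$ up to a nonzero factor coming from $Q$ and from $\d_\l^2 P(0,\o)\neq 0$ (which holds since the multiplicity is exactly two); inequality \eqref{ineg:bif} then says precisely that $b''(0) > 0$. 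Hence $b(t) = c\, t^2(1 + o(1))$ with $c>0$, and the roots are $\mu^\pm(t,x,\xi) = a(t) \pm i\sqrt{b(t)}$. Since $b(t) = t^2 \tilde b(t)$ with $\tilde b$ continuous, $\tilde b(0) = c > 0$, we get $\sqrt{b(t)} = |t|\sqrt{\tilde b(t)}$ near $t=0$; writing $\mu^\pm(t) = a(t) \pm i\, t\, \sqrt{\tilde b(t)}$ for $t\ge 0$ exhibits the $C^1$-in-$t$ branches with $\d_t \Im m\,\mu^\pm(0,x,\xi) = \pm\sqrt{c}\neq 0$, which is \eqref{mu:pm}, and the simplicity for $t>0$ follows from $b(t)>0$.

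For the regularity claims I would argue as follows. The coefficients $a,b$ (equivalently the coefficients of $q$) are obtained from $P$ by the division algorithm / symmetric-function formulas, hence inherit the regularity of $\underline A$: $C^1$ in $t$, $C^{1,1/2}$ in $x$, smooth in $\xi$; and $\tilde b(t,x,\xi) = b(t,x,\xi)/t^2$ extends continuously across $t=0$ because $b$ and $\d_t b$ vanish at $t=0$ with the required uniformity in $(x,\xi)\in V$ (Taylor's formula with integral remainder, using that $\d_t^2 P$ is continuous under the hypothesis $\s > 3 + d/2$). Therefore $\mu^\pm = a \pm i t\sqrt{\tilde b}$ is as regular as $\underline A$ in $(t,x,\xi)$, and $\d_t \mu^\pm = a' \pm i(\sqrt{\tilde b} + t\,\d_t\sqrt{\tilde b})$ is continuous in $(t,x)$ and smooth in $\xi$ — here one uses that $\tilde b > 0$ near $t=0$, so $\sqrt{\tilde b}$ is as smooth as $\tilde b$. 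The last sentence of condition (iv) — that $\d_t P$ and $\d_\l P$ vanish at all $\o'$ near $\o$ in $\mathcal S_U$ — is what guarantees $b$, $b'$ vanish identically in $(x,\xi)$ on the branch, so the construction is uniform in $(x,\xi)\in V$.

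The main obstacle is the extraction of $\sqrt{b(t)}$ with control of regularity: a square root of a vanishing function is generically only H\"older, and the point is that $b$ vanishes to second order in $t$ with a strictly positive, continuous second coefficient (this is exactly \eqref{ineg:bif}), which is what upgrades $\sqrt{b(t)}$ to $C^1$ in $t$ locally. Getting the ``$c>0$'' from \eqref{ineg:bif} requires carefully bookkeeping the nonzero factors $Q(0,\o)$ and $\d_\l^2 P(0,\o)$ relating $b''(0)$ to $(\d_t^2 P\,\d_\l^2 P - (\d^2_{t\l}P)^2)(0,\o)$; everything else is the Weierstrass preparation theorem plus Taylor expansion. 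I would also note that smooth diagonalizability (Assumption \ref{ass:bif}(ii)) is only needed to ensure the hyperbolic part of the spectrum behaves well and does not interfere with the pair $\mu^\pm$ near $(0,x^0,\xi^0)$; the branching statement itself is local around $\l^0$ and uses only the jet conditions.
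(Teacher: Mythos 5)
Your proposal is correct and follows essentially the same route as the paper's proof: isolate the double eigenvalue in a degree-two factor of the characteristic polynomial, apply the quadratic formula, and Taylor-expand the discriminant in $t$ to second order, where \eqref{ineg:bif} gives a sign-definite second derivative and hence $C^1$-in-$t$ branches with $\d_t \Im m \, \mu^\pm(0,x,\xi) \neq 0.$ The only notable difference is in how the quadratic factor is produced: the paper restricts $\underline A$ to the separated rank-two spectral block (the cleaner justification at this finite regularity, where Weierstrass/Malgrange preparation is not directly applicable since $P$ is only $C^1$ in $t$ and $C^{1,1/2}$ in $x$), but your fallback via symmetric functions of the two roots in the separated cluster yields the same factorization with the same regularity, so the argument goes through.
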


\begin{proof} Given $(x,\xi) \in V$ and an eigenvalue $\mu(0,x,\xi)$ such that $\d_\l P(0,\o) = 0,$ the eigenvalue $\mu(0,x,\xi)$ has multiplicity exactly two, by the inequality in condition (iv) from Assumption \ref{ass:bif}, and is separated from the other eigenvalues. Thus for $t > 0,$ its multiplicity is at most two, and $\mu$ is an eigenvalue for a size-two matrix  $B,$ with real coefficients just like $\underline A,$ the same regularity as $\underline A,$ and with associated characteristic polynomial $P_0.$  The characteristic polynomial $P$ of $\underline A$ factorizes into $P = P_0 P_1,$ with $P_1(0,\o) \neq 0.$ We have 
 $$2  \mu(t,x,\xi) = \tr B(t,x,\xi) \pm  (\det B(t,x,\xi))^{1/2},$$
 with $\det B(0,x,\xi) = 0$ for all $(x,\xi) \in V.$   
 We compute, as in Proposition 1.4 and Appendix A of \cite{LNT}:
 $$ \ba \d_t \det B(0,x,\xi) & = 2 \tr B(0) \d_t \tr B(0) - 4\d_t \det B(0)  
  = -4 \d_t P_0(0,x,\xi) = 0,\ea$$
 and
 $$ \ba \d_t^2 \det B(0,x,\xi) & = 2 (\d_t \tr B(0))^2 + 2 \tr B(0) \d_t^2 \tr B(0) - 4\d_t^2 \det B(0) 
\\ &  =2(\d_{t}\d_{\lambda} P_{0})^{2}-2\d_{\lambda}^{2}P_{0}\d_{t}^{2}P_{0}(0) < 0\ea$$
by the inequality in condition (iv) in Assumption \ref{ass:bif}. We may shrink $V$ if necessary so that $\d_t^2 B(0,x,\xi)$ is bounded away from zero in $V.$ Thus, given $(x,\xi) \in V,$ 
\be \label{eq:mu} \mu(t,x,\xi) = \tr B(t,x,\xi) \pm  t \left( \int_0^1 (1 - s) \d_t^2 \det B(s t, x,\xi) \, ds \right)^{1/2},\ee
where $\d_t^2 B(t,x,\xi) < 0$ is bounded away from zero in $[0,T] \times V,$ if $T > 0$ and $V$ are small enough. %

Thus $\mu$ branches out of the imaginary axis, is differentiable in $t$ at $(0,x,\xi),$ and splits into two eigenvalues $\mu^\pm,$ with $\d_t \Im m \, \mu^\pm(0,x,\xi) \neq 0.$ The functions $(x,\xi) \to \d_t  \mu^\pm(0,x,\xi)$ are continuous in $x$ and smooth in $\xi,$ by condition on $\s:$ since $\s > 3 + d/2,$ the first and second time derivatives of $P,$ hence $P_0,$ at $t = 0,$ are continuous in $x;$ the regularity in $\xi$ comes from the smoothness in $\xi$ of $A$ and \eqref{eq:mu}.  

For $t > 0,$ the eigenvalues $\mu^\pm(t,x,\xi)$ are simple hence as regular as $\underline A.$ By condition $s > 2 + d/2,$ the time-derivative $(\d_t u, \d_t \d_x u)$ is continuous in $x,$ hence for $t > 0,$ $\d_t \mu^\pm$ is continuous in $x;$ besides, it is also smooth in $\xi$ just like $\underline A.$   
\end{proof} 

Thus the spectral picture is as follows: for some $(x^0,\xi^0)$ in $U,$ the eigenvalues of $\underline A(t,x,\xi)$ for $(t,x,\xi)$ near $(0,x^0,\xi^0)$ fall into two categories:
\begin{itemize}
\item real eigenvalues with multiplicity one;
\item and eigenvalues which at $t = 0$ and in a neighborhood $V$ of $(x^0,\xi^0),$ are multiplicity-two and real, and which as $t$ goes from $0$ to $t >0 $ all simultaneously branch out of the real axis into simple eigenvalues with non-zero (and opposite) imaginary parts which are time-differentiable, hence $O(t).$
\end{itemize}

The smooth diagonalizability of $A$ at $t = 0$ is assumed: this is condition (ii) in Assumption \ref{ass:bif}. The discussion above shows that condition (iv) in Assumption \ref{ass:bif} implies furthermore the smooth diagonalizability of $\underline A$ (defined in \eqref{def:unA}) for small positive times. Indeed, the bifurcating eigenvalues are semi-simple at $t = 0$ and split into distinct eigenvalues (see \eqref{mu:pm}) at $t > 0.$ By continuity, these eigenvalues stay distinct for small times. The non-bifurcating eigenvalues are simple at $t = 0,$ hence stay simple for small times, by continuity. The diagonalization matrix is also time-differentiable at $t = 0:$

\begin{lem} \label{lem:evectors} If the space-frequency domain $V$ and $T > 0$ are small enough, then associated with the bifurcation eigenvalues $\mu^\pm$ discussed in Lemma {\rm \ref{lem:bif}} we can find eigenvectors $e^\pm,$ which enjoy the same regularity property in $(t,x,\xi)$ as $\mu^\pm.$ 
\end{lem}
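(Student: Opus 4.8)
The plan is to desingularize the coalescence at $t=0$ by dividing the relevant $2\times 2$ block by $t$. The point, which the rescaling resolves, is that at $t=0$ the pair $\mu^\pm$ coincides, so the spectral projector of $B$ onto (say) the $\mu^+$-eigenspace is a priori singular there, and eigenvectors built for $t>0$ need not converge as $t\to 0$. Recall from the proof of Lemma \ref{lem:bif} the real $2\times 2$ matrix $B(t,x,\xi)$, as regular as $\underline A$ \eqref{def:unA}, whose spectrum is $\{\mu^+,\mu^-\}$ and which is the action of $\underline A$ on the rank-two spectral subspace of the pair $\{\mu^+,\mu^-\}$, that pair being separated from the rest of the spectrum. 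Since $\underline A(0,x,\xi)$ is smoothly diagonalizable near $(x^0,\xi^0)$ by condition (ii) of Assumption \ref{ass:bif}, so is its restriction $B(0,x,\xi)$; a diagonalizable $2\times 2$ matrix with a double eigenvalue is scalar, whence
\be \label{eq:BC}
 B(t,x,\xi) = \mu(0,x,\xi)\,{\rm Id} + t\,C(t,x,\xi), \qquad C(t,x,\xi) := \int_0^1 \d_t B(s t,x,\xi)\,ds,
\ee
for $(x,\xi)\in V$. First I would check that $C$ carries the regularity we ultimately want: it is continuous in $(t,x)$ and smooth in $\xi$, and, by the conditions on $s$ and $\s$, $\d_t C$ is again continuous in $(t,x)$ and smooth in $\xi$. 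This last point is exactly where $\s > 3 + d/2$ enters: it is what makes $\d_t^2 B$ at $t=0$ — three spatial derivatives of the datum, via \eqref{equation reference} — meaningful.

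The key step is the spectrum of $C$. For $t>0$, a vector is an eigenvector of $B(t,x,\xi)$ for $\mu^\pm(t,x,\xi)$ if and only if it is an eigenvector of $C(t,x,\xi)$ for $\nu^\pm(t,x,\xi) := t^{-1}\big(\mu^\pm(t,x,\xi) - \mu(0,x,\xi)\big)$; by \eqref{eq:mu} these functions extend continuously to $t=0$ with $\nu^\pm(0,x,\xi) = \d_t \mu^\pm(0,x,\xi)$, inheriting the regularity of $C$. The crucial gain of the rescaling is that $\nu^+$ and $\nu^-$ are now separated \emph{down to and including} $t=0$: for $t>0$ because $\mu^+(t,x,\xi)\neq\mu^-(t,x,\xi)$ by Lemma \ref{lem:bif}, and at $t=0$ because \eqref{mu:pm} gives that $\d_t \Im m\,\mu^\pm(0,x,\xi) = \Im m\,\nu^\pm(0,x,\xi)$ are opposite and nonzero, so $\nu^+(0,x,\xi)$ and $\nu^-(0,x,\xi)$ have distinct imaginary parts. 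This strict separation is precisely what the strict bifurcation inequality \eqref{ineg:bif} buys us, via Lemma \ref{lem:bif}. Shrinking $V$ and $T$ if necessary, $C(t,x,\xi)$ therefore has two simple eigenvalues $\nu^\pm(t,x,\xi)$ throughout a neighborhood of $(0,x^0,\xi^0)$.

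Given this, the eigenvectors follow from the Riesz construction. Let $\gamma^+,\gamma^-$ be disjoint small circles around $\nu^+(0,x^0,\xi^0)$ and $\nu^-(0,x^0,\xi^0)$, and set $\Pi^\pm(t,x,\xi) := \tfrac{1}{2i\pi}\oint_{\gamma^\pm}\big(z - C(t,x,\xi)\big)^{-1}\,dz$; by the separation just established, these are rank-one projectors near $(0,x^0,\xi^0)$, inheriting the regularity of $C$ (as in Proposition 2.1 of \cite{Rouche}). Fixing $e_0\in\C^2$ outside the two lines $\ker\Pi^+(0,x^0,\xi^0)$ and $\ker\Pi^-(0,x^0,\xi^0)$, the maps $e^\pm := \Pi^\pm e_0$ are nonzero near $(0,x^0,\xi^0)$ by continuity, hence span $\mathrm{Range}\,\Pi^\pm$, so $C e^\pm = \nu^\pm e^\pm$; multiplying \eqref{eq:BC} through gives $B e^\pm = (\mu(0,x,\xi) + t\,\nu^\pm)\, e^\pm = \mu^\pm e^\pm$. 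Thus $e^\pm$ is an eigenvector of $B$, hence of $\underline A$ on the bifurcating spectral subspace, for $\mu^\pm$, with the regularity of $C$, which matches that of $\mu^\pm$ recorded in Lemma \ref{lem:bif}. The main obstacle is the middle step: one must recognize that dividing by $t$ turns the coalescence into a genuine separation — equivalently, that the tangent matrix $C(0,x,\xi)$ has distinct eigenvalues — and this rests entirely on \eqref{ineg:bif}; once that is in hand, the rest is the resolvent calculus of a matrix with separated spectrum.
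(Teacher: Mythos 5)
Your argument is correct and rests on the same desingularization as the paper's own proof: the $2\times 2$ block is scalar at $t=0$ by semi-simplicity, so one writes it as $\mu(0,x,\xi)\,\mathrm{Id} + t\,C(t,x,\xi)$ and works with the rescaled matrix $C$, whose relevance at $t=0$ is guaranteed by the strict inequality \eqref{ineg:bif} through Lemma \ref{lem:bif}. The only divergence is the final extraction of eigenvectors: the paper notes that the off-diagonal entry $\tilde a_{12}$ of the rescaled block cannot vanish at $t=0$ (otherwise the eigenvalues would remain real) and writes an eigenvector componentwise, $e_2 = (\mu_0^\pm - \tilde a_{11})\,\tilde a_{12}^{-1} e_1$, whereas you observe that $\mathrm{spec}\,C(0,x,\xi) = \{\partial_t \mu^\pm(0,x,\xi)\}$ is separated by \eqref{mu:pm} and apply the Riesz projectors of $C$ to a fixed vector; both devices produce eigenvectors with the regularity of $C$, hence of $\mu^\pm$, so the difference is one of packaging rather than substance. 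One small caveat: your parenthetical claim that $\partial_t C$ is continuous in $(t,x)$ for $t>0$ would require $\partial_t^2 B$ pointwise away from $t=0$, i.e.\ more than the posited $H^s$ regularity of the solution gives (the condition $\s > 3 + d/2$ only concerns the datum, hence $t=0$); but nothing in your construction uses it, since the projector representation already yields the regularity stated in the lemma.
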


\begin{proof} The symbol $\underline A$ can be smoothly block diagonalized, for all times. Consider $A_0$ a block of size two associated with a pair of bifurcating eigenvalues as in Lemma \ref{lem:bif}. At $t = 0,$ we have $A_0(0,x,\xi) = \mu(0,x,\xi) {\rm Id},$ since $\mu(0,x,\xi)$ is a double and semi-simple eigenvalue of $A_0.$ Thus the matrix $A_0 - \mu {\rm Id}$ vanishes at $t = 0$ and is time-differentiable at $t = 0$ (recall, $\underline A$ is $C^1$ in $t;$ the time regularity of $\mu$ was established in Lemma \ref{lem:bif}), with a time derivative at $t = 0$ which is continuous in $(t,x),$ and smooth in $\xi.$ We may write $A_0 - \mu {\rm Id} = t \tilde A_0(t,x,\xi) =: t (\tilde a_{ij})_{1 \leq i,j \leq 2}.$ If we had $\tilde a_{12}(0,x,\xi) = 0,$ then the eigenvalues of $\tilde A_0$ would stay real. Indeed, those are 
 $$ \mu_0^\pm = \frac{1}{2} \mbox{tr}\tilde A_0 \pm \frac{1}{2} \big( (\tilde a_{11} - \tilde a_{22})^2 - 4 \tilde a_{12} \tilde a_{21} \big)^{1/2}.$$
 Thus $\tilde a_{12}(0,x,\xi) \neq 0,$ and we may assume that $V$ and $T$ are small enough so that $|\tilde a_{12}(t,x,\xi)|$ is bounded away from 0 over $[0,T] \times V.$ Then, the entries $(e_1,e_2)$ of an eigenvector of $\tilde A_0$ take the form
  $$ e_2 = (\mu_0^\pm - \tilde a_{11}) \tilde a_{12}^{-1} e_1,$$
  which implies the announced regularity for the eigenvectors.
\end{proof}

\subsection{Para-linearization and localization}  \label{sec:pf:trans}

 The hyperbolic change of scales from Section \ref{sec:hyp} where we changed $t$ into $2^{-j} t,$ and considered frequencies of size $2^j,$ is not appropriate here. We let instead 
 \be \label{def:tildeu:trans}
 \tilde u(t,x) = u(2^{-j/2} t, x),
 \ee
 and still consider frequencies of size $2^j.$ After para-linearization (see Section \ref{sec:para}), we arrive at 
\be \label{ivp:tildeu:bif} \left\{\begin{aligned}
 \d_t \tilde u + 2^{j/2} \op_j(i A) \tilde u & = - 2^{-j/2} \big( \op_j(\d_3 F) \tilde u + h_j^{-1} R^{para}_j\big),
 \\ 
 \tilde u(0) & = u_{in}, \end{aligned}\right.
 \ee 
 where the principal symbol $A$ is evaluated at $(2^{-j/2} t, x, \tilde u, \d_x \tilde u).$ Note that, in a difference with Section \ref{ivp:tildeu}, the leading operator $\op_j(i A)$ has a large $2^{j/2}$ prefactor. 
 
 As in Section \ref{sec:para}, the para-differential remainder $R^{para}_j$ belongs to $H^{2(s - 1) - d/2},$ uniformly in the small time interval under consideration, and $h_j$ is defined in \eqref{def:hj}. 
 
 Here the $\d_3 F$ term coming from the para-linearization is not seen as a perturbation of the principal symbol, but as a remainder. This is due to the nature of our assumptions: even a small perturbation would here significantly perturb the spectral picture of Section \ref{sec:the:spectral:picture}, since we assume the eigenvalue coalesce. Just like $A,$ in \eqref{ivp:tildeu:bif}, the function $\d_3 F$ is evaluated at $(2^{-j/2} t, x, \tilde u, \d_x \tilde u).$

 Next we use the same localization step as in Section \ref{sec:loc:psi}. We define $v = \op_j(\psi) \tilde u,$ with the same cut-off $\psi.$ This leads to 
\be \label{ivp:v:trans}
\left\{\begin{aligned}
 \d_t v + 2^{j/2} \op_j(M) v & = g, \\ 
 v(0) & = \op_j(\psi) u_{in},
\end{aligned}\right.\ee
where the leading symbol $M$ is defined as 
\be \label{def:M:09} M = i \psi^\sharp A = i \psi^\sharp \underline A(2^{-j/2} t),\ee  (recall that $\underline A$ is defined in \eqref{def:unA}, and note the difference with \eqref{def:M}),
 and the remainder term $g$ is defined as 
\be \label{def:g:bif}
 \ba g & := - [\op_j(\psi), \op_j(i A)] \tilde u - 2^{-j/2}  \op_j(\psi) \big(\op_j(\d_3 F) \tilde u + h_j^{-1} R^{para}_j\big) \\ & + \op_j(i A) \big(\op_j(\psi^\sharp) \op_j(\psi) - \op_j(\psi)\big) \tilde u + (\op_j(i \psi^\sharp A) - \op_j(i A) \op_j(\psi^\sharp)\big) v. \ea \ee 
Compared to the $g$ term from the proof of Theorem \ref{th:main}, we find an extra $\d_3 F$ term here, a different prefactor in front of the para-differential remainder, and $i A$ instead of $\tilde A.$

\subsection{Diagonalization} \label{sec:diag}

We now diagonalize $M:$ by the results of Section \ref{sec:the:spectral:picture}, there is a family of symbols $Q(t)$ of order 0, such that $Q(t,\cdot)$ has the same regularity as $M,$ the symbol $Q$ is differentiable in $t,$ with $\d_t Q(t,\cdot)$ continuous in $(x,\xi),$ and such that $Q M Q^{-1}$ is diagonal. We let 
\be \label{def:w} w = \op_j(\psi^\flat Q) v.\ee
 Note that $Q$ is a priori only defined locally. This is an issue in view of defining $\op_j(Q),$ above, and in the perspective of inverting the change of variable \eqref{def:w}. We can extend the locally defined symbol $Q$ into a globally defined symbol which is order zero and uniformly invertible, so that its inverse is also a symbol of order zero. This is described for instance in Appendix C of \cite{LNT}. In the following we identify $Q$ with its extension. We observe that
 \be \label{Q}
  \op_j(Q^{-1}) \op_j(Q) = {\rm Id} + 2^{-j} R_Q,
 \ee
since by assumption $s - 1 - d/2 > 1$ where $R_Q := R_{1}(Q^{-1},Q),$ with notation from Section \ref{sec:composition}, is $L^2 \to L^2$ bounded, uniformly in $j$ (see \eqref{composition:para}-\eqref{remainder}).
  
 The system in $w$ involves the term
 $$ \ba \op_j(\psi^\flat Q) & \op_j(M) v \\ & = \op_j(\psi^\flat Q) \op_j(M) \Big( \op_j(Q^{-1}) \op_j(Q)-  2^{-j} R_Q \Big) v \\ & = \Big( \op_j(\psi^\flat Q M Q^{-1}) + 2^{-j} R_1(\psi^\flat Q M, Q^{-1}) + 2^{-j} R_1(\psi^\flat Q, M) \op_j(Q^{-1}) \Big) \op_j(Q) v \\ & + 2^{-j} \op_j(\psi^\flat Q) \op_j(M) R_Q v. \ea $$
 We further have
 $$ \ba \op_j(\psi^\flat Q M Q^{-1}) \op_j(Q) & = \Big( \op_j(Q M Q^{-1}) \op_j(\psi^\flat) + 2^{-j} R_{1}(Q M Q^{-1}, \psi^\flat) \Big) \op_j(Q) \\ & = \op_j(Q M Q^{-1}) \op_j(\psi^\flat Q) + 2^{-j} \op_j(Q  MQ^{-1}) R_1(\psi^\flat, Q) \\ & + 2^{-j} R_{1}(Q M Q^{-1}, \psi^\flat) \op_j(Q). \ea $$  
Thus the system in $w$ appears as 
\be \label{sys:w} \d_t w + 2^{j/2} \op_j(Q M Q^{-1}) w = g_Q,\ee 
with
\be \label{def:gQ}
g_Q := \op_j(\psi^\flat Q) g - 2^{-j/2} \op_j(\psi^\flat \d_t Q) v + 2^{-j/2} R(\psi^\flat, M, Q, Q^{-1}) v,
\ee 
where $R(\psi^\flat, M, Q, Q^{-1})$ is the sum of all five remainders terms involving $R_1$ terms that appear above. These terms are exactly the same nature as the $R_H$ terms from \eqref{def:RH}: those are order-one remainders in the composition of an order-one and an order-zero para-differential operator, with $(t,x)$ regularity equal to the one of $(u, \d_x u).$

The new remainder terms (the last two terms in $g_Q$ above) all involve $v.$ Hence they do {\it not} belong to the  ``out'' category, but rather to the $G$ term (see Section \ref{sec:remainder:1}).

Finally, the new $\d_3 F$ term in $g$ is decomposed into an ``in'' and an ``out'' term, as follows:
 $$ \op_j(\psi^\flat Q) \op_j(\psi) \op_j(\d_3 F) \tilde u = \op_j(\psi^\flat Q) \op_j(\d_3 F) v +  \op_j(\psi^\flat Q) [\op_j(\psi^\flat), \op_j(\d_3 F)] \tilde u,$$
 and the first term in the above right-hand side is handled just like the $G v$ terms in the proof of Theorem \ref{th:main} (see Lemma \ref{lem:Gstar}), while the second term is amenable to the analysis of Lemma \ref{lem:out1} (it is actually easier to handle than the term in the proof of Lemma \ref{lem:out1}, for the operator involved is order-zero, and comes with a small $2^{-j/2}$ prefactor).

\subsection{Bounds for the forward-in-time and backward-in-time solution operators} \label{sec:bounds:trans}

Let $\mu$ be one branch of eigenvalues of $\underline A,$ as described in Section \ref{sec:the:spectral:picture}. The operator $\op_j(\mu)$ is $L^2 \to L^2$ bounded, hence has a solution operator $S(t';t),$ for $0 \leq t' \leq t,$ defined as the map which takes a datum $z^0$ at time $t'$ to $z(t),$ where $z(t)$ is the unique solution at time $t$ to the 
initial-value problem
 $$ \d_t z + 2^{j/2} \op_j(i \mu) z = 0, \qquad z(t') = z^0.$$
Note that it is indeed $\op_j(i \mu)$ that we should focus on (and not $\op_j(\mu)$), since $M$ is defined in \eqref{def:M:09} in terms of $i \underline A.$ 

\begin{lem} \label{lem:S:trans} For $0 \leq \t \leq t,$ if the eigenvalue $\mu$ is real, then 
\be \label{bd:imaginary} \| S(\t;t) \|_{L^2 \to L^2} \lesssim 1.
 \ee
If $\mu$ is a bifurcating eigenvalue, then for any $\zeta_-, \zeta_+$ such that 
 $$ \zeta_- <  |\d_t \Im m \, \mu^\pm(0,x^0,\xi^0)| < \zeta_+,$$ 
the following holds true. If the support of $\psi$ is small enough, for $0 \leq \t \leq t:$ 
 \be \label{bd:trans:fwd}  \| S(\t;t) \|_{L^2 \to L^2} \lesssim \exp\big( (t^2 - \t^2)\zeta_+/2 + C (t - \t)2^{-j \theta_\star}\big),\ee
 where $\theta_\star = \theta/(\theta + d_\star),$ with a constant $d_\star$ that depends only on $d.$
 Besides, letting 
 \be \label{tstar:trans}
 t_\star := \left( \frac{2 j (\theta_\star \ln 2 - \e)}{\zeta_+ - \zeta_-}\right)^{1/2}, 
\ee
with $\e > 0$ arbitrarily small,  we have, for $0 \leq \t \leq t \leq t_\star,$ if the support of $\tilde \psi$ is small enough, and if $j$ is large enough depending in $Q,$ $d$ and $\e,$ if if $\mu$ is an unstable bifurcating eigenvalue, meaning $\Im m \, \mu < 0$ for $t > 0,$ then we have the backward-in-time decay bound: 
\be \label{bd:bwd:branch} 
 \| \op(\tilde \psi) \circ S(t;\t) \|_{L^2 \to L^2} \lesssim \exp\big( -(t^2 - \t^2)\zeta_-/2 + C (t - \t)2^{-j \theta_\star}\big).
 \ee 
 Above in \eqref{bd:imaginary} and \eqref{bd:bwd:branch}, the implicit constants do not depend on $t$ nor on $j.$ The positive constant $C$ depends only on $F$ and the space dimension $d,$
\end{lem}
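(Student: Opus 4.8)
\textit{Plan.} The object to estimate is the scalar per–branch flow, i.e.\ the solution operator of $\d_t z + 2^{j/2}\op_j(i\mu(2^{-j/2}t,\cdot)) z = 0$, one for each branch $\mu$ of eigenvalues of $\underline A$ supplied by the spectral picture of Section \ref{sec:the:spectral:picture} (I suppress the cut-off $\psi^\sharp$ from \eqref{def:M:09}, which equals $1$ near $(x^0,\xi^0)$ and is supported in a small neighbourhood thereof). The plan is to run an energy identity on this scalar equation and read the rate of growth off the symbol by Gårding's inequality in the quantitative form of Proposition \ref{lem:rates}, distinguishing real branches from bifurcating ones. Throughout, $i\mu(2^{-j/2}t,\cdot)$ is of order zero with spatial regularity $C^\theta$, $\theta = s-1-d/2 > 1$, is only $C^1$ in $t$, and carries the large prefactor $2^{j/2}$; the mere $C^1$–in–$t$ regularity forces us, as in the proof of Lemma \ref{lem:S}, to apply Proposition \ref{lem:rates} with coefficients frozen along the flow, which is legitimate since $\d_t\mu^\pm$ is continuous (Lemma \ref{lem:bif}).

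\textit{Real branch.} Here $i\mu$ is purely imaginary, so by the semiclassical adjoint calculus for $C^\theta$ ($\theta>1$) symbols of order zero one has $\Re e\langle\op_j(i\mu)z,z\rangle = O(2^{-j})\|z\|_{L^2}^2$. The energy identity $\tfrac{d}{dt}\|z\|_{L^2}^2 = -2\cdot 2^{j/2}\Re e\langle\op_j(i\mu)z,z\rangle = O(2^{-j/2})\|z\|_{L^2}^2$ and Grönwall's lemma, over the rescaled lifespan $t \le 2^{j/2}T$, keep the exponent bounded uniformly in $j$, which is \eqref{bd:imaginary}.

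\textit{Bifurcating branch.} The quantitative input is Lemma \ref{lem:bif}: $\mu^\pm(0,\cdot)$ is real and $\d_t\mu^\pm$ is continuous in $(t,x)$, so integrating $\d_t\Im m\,\mu^\pm$ from $s=0$ and using continuity at $(x^0,\xi^0)$ gives, once $\mathrm{supp}\,\psi$ (hence $\mathrm{supp}\,\psi^\sharp$) is small enough, the bound $|\Im m\,\mu^\pm(s,\cdot)| \le \zeta_+ s$ on $\mathrm{supp}\,\psi^\sharp$ for $0 \le s \le s_0$, for some $s_0 > 0$, and the matching lower bound $|\Im m\,\mu^\pm(s,\cdot)| \ge \zeta_- s$ once $\mathrm{supp}\,\tilde\psi$ is small enough. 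Writing $i\mu^\pm = -\Im m\,\mu^\pm + i\Re e\,\mu^\pm$, the purely imaginary piece contributes $O(2^{-j})$ to $\Re e\langle\op_j(i\mu^\pm)z,z\rangle$ as above, while for the real piece we use Gårding: with $s = 2^{-j/2}t$ (which is $O(j^{1/2}2^{-j/2}) \le s_0$ for $j$ large), $-\Re e(i\mu^\pm)(2^{-j/2}t,\cdot) = \Im m\,\mu^\pm(2^{-j/2}t,\cdot)$ is bounded, in sup-norm and in symbolic $C^\theta$-norm, by $\zeta_+ 2^{-j/2}t$, whence $-\Re e\langle\op_j(i\mu^\pm)z,z\rangle \le (\zeta_+ 2^{-j/2}t + C\,2^{-j/2}t\,2^{-j\theta_\star} + O(2^{-j}))\|z\|_{L^2}^2$, the Gårding error being proportional to the (small) size of the symbol. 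Multiplying by $2^{j/2}$ the prefactor is absorbed, $\tfrac{d}{dt}\|z\|_{L^2}^2 \le 2(\zeta_+ t + C\,t\,2^{-j\theta_\star})\|z\|_{L^2}^2$, and integrating from $\t$ to $t$ gives \eqref{bd:trans:fwd}. For \eqref{bd:bwd:branch} one works on the unstable branch and on the smaller support of $\tilde\psi$, where $-\Re e(i\mu)$ keeps a fixed sign and has size $\ge \zeta_- 2^{-j/2}t$; one runs the energy estimate for $\op_j(\tilde\psi)z$ backward in time, commuting $\op_j(\tilde\psi)$ past $\op_j(i\mu)$ with a small error and invoking the lower half of Gårding's inequality restricted to $\mathrm{supp}\,\tilde\psi$, exactly as in Lemma \ref{lem:bd:bwS}. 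The restriction $t \le t_\star$ with $t_\star$ as in \eqref{tstar:trans} is chosen so that the residual $2^{-j\theta_\star}$ error stays subdominant to the genuine quadratic-in-time exponent, and shrinking $\mathrm{supp}\,\tilde\psi$ sends $\zeta_- \uparrow |\d_t\Im m\,\mu^\pm(0,x^0,\xi^0)| > 0$, so in particular $\zeta_- > 0$.

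\textit{Main obstacle.} The delicate point is the bookkeeping around the $2^{j/2}$ prefactor: a crude use of Gårding's inequality would leave an error of size $2^{j/2}\cdot 2^{-j\theta_\star}$, which is not manifestly small, and would spoil the bound over times $t \le t_\star \sim j^{1/2}$. The resolution is that, for a branching eigenvalue, the real part of $i\mu^\pm$ is itself only of size $O(2^{-j/2}t)$ — this is precisely the $C^1$-in-time branching established in Lemma \ref{lem:bif} — and the Gårding error scales with the size of the symbol, so the prefactor cancels against it. Securing this, together with the choice of $t_\star$ in \eqref{tstar:trans} that keeps the leftover error negligible over $[0,t_\star]$ while still letting the rate $\zeta_-$ be seen, is the heart of the argument; the remaining points (freezing the $C^1$-in-$t$ coefficients, and, when this lemma is later used, reassembling the block-diagonal propagator of \eqref{sys:w} from the scalar flows) are routine.
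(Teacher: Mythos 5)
Your proof follows the paper's route: the real branches are handled by the $L^2$ energy identity plus symmetrization of the purely imaginary symbol, and for the bifurcating branches you use Lemma \ref{lem:bif} to trap the real part of the rescaled symbol between $t\zeta_-$ and $t\zeta_+$ on a small support and then run the G\r{a}rding-based growth/decay estimates, which is exactly what the paper does by invoking Proposition \ref{lem:rates} with $\gamma_\pm(t)=t\zeta_\pm$, yielding \eqref{bd:trans:fwd}, \eqref{bd:bwd:branch} and the threshold \eqref{tstar:trans}. Your inline treatment of the $2^{j/2}$ prefactor (splitting off the large skew part $2^{j/2}\Re e\,\mu$ by adjoint calculus and noting that the remaining self-adjoint part is itself $O(2^{-j/2}t)$, so the G\r{a}rding error scales with it) is the same mechanism that underlies the paper's appeal to Proposition \ref{lem:rates}, so the argument is correct at the same level of rigor as the paper's.
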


\begin{proof} As described in Section \ref{sec:the:spectral:picture}, under Assumption \ref{ass:bif}, the branch of eigenvalues $\mu$ can either be real for all times, or branch out of the real axis as time becomes positive.

If $\mu$ is real at all time, we perform a simple $L^2$ estimate:
$$ \begin{aligned} 2 \Re e \, \big( \op_j(i \mu) z, z, \big)_{L^2} & = \big( (\op_j(i \mu) + \op_j(i \mu)^\star) z, z \big)_{L^2} \\ & = \big( \op_j(i \mu - i \mu^\star) z, z \big)_{L^2} + O(2^{-j \theta} \| z \|_{L^2}^2), \end{aligned}$$
where $\op_j(\mu)^\star$ denotes the $L^2$ adjoint of $\op_j(\mu),$ and $\mu^\star$ the complex conjugate of $\mu.$ Indeed, we have for adjoint operators a result analogous to the composition result of Section \ref{sec:composition}. With $\mu \in \R$ for all times, this gives
$$ \d_t \| z \|_{L^2}^2 \lesssim 2^{- j (\theta - 1/2)} \| z \|_{L^2}^2,$$
and the bound \eqref{bd:imaginary} then follows from the assumed regularity of the solution.

If $\mu$ is a bifurcating eigenvalue, we use the description of Section \ref{sec:the:spectral:picture}: 
\be \label{mu}
 2^{j/2} \mu(2^{-j/2} t, x, \xi) = i t \mu_1(2^{-j/2} t, x,\xi) + 2^{j/2}\mu_2(2^{-j/2} t, x, \xi), \qquad \mu_1(0,x^0,\xi^0) \neq 0,
\ee
where $\mu_1$ and $\mu_2$ are real. Let $\zeta_\pm$ such that
$$ \zeta_- <  \mu_1(0,x^0,\xi^0) < \zeta_+.$$
 Then, if the support of $\psi$ is small enough, by Lemma \ref{lem:bif},  the spectral bound \eqref{rate:upper} holds with $\g_+(t) = t \zeta_+.$ The upper bound \eqref{bd:imaginary} then follows by Proposition \ref{lem:rates}.  If the support of $\tilde \psi$ is small enough, the spectral bound \eqref{rate:lower} holds with $\gamma_-(t) = t \zeta_-.$ The backward-in-time upper bound \eqref{bd:bwd:branch} then follows from Proposition \ref{lem:rates}.
\end{proof}

\subsection{Conclusion: non-existence of Sobolev solutions in the transition case} \label{sec:end2}

System \eqref{sys:w} is a collection of differential equations that are decoupled to first order. For each of those differential equations, we have solution operators which satisfy the bounds of Lemma \ref{lem:S:trans}. We single out an eigenvalue $\mu = \mu_m$ with $\Re e \, \mu_m < 0$ for $t > 0,$ such that $|\d_t \Im m \, \mu^{\pm}_m(0,x^0,\xi^0)|$ is maximal among the bifurcating eigenvalues. The associated solution operator is $S_m,$ and the associated component of $w$ is $w_m.$ Then, we have a representation formula analogous to \eqref{rep:bw2}:
\be   \label{w:m}
\begin{aligned}
 w_m(0) & = \big( w_m(0) - S_m(t;0) w^f_{m k}(t) \big) + S_m(t;0) w_m(t) - S_m(t;0) G^{out}_{m k}(t) \\ & - 2^{-jk \theta} S_m(t;0) \int_{D_k(t)} S_{m k}(T_k;t) v(t_k) dT_k, \qquad \mbox{for any $k \geq 1.$}
\end{aligned}
\ee
Above, the iterated operator $S_{m k}$ is defined in a way analogous to $S_{E k}$ and $S_{H k}$ in the proof of Theorem \ref{th:main}. That is, we have $S_{m 1} = S_m G_m,$ where $G_m = \op(Q_m) G,$ where $Q_m$ is the projection onto the eigenspace spanned by $\mu_m,$ and parallel to the direct sum of the othe eigenspaces, and  
$S_{m 2} = S_{m 1} \circ \sum_{m'} S_{m' 1},$
the sum running over the finite spectrum of $A.$ Above, $G$ comprises all the remainder terms that are not ``out'', after sorting out the remainder terms just like in Section \ref{sec:remainder:1}. We saw above in the last paragraph of Section \ref{sec:pf:trans} that no ``out'' terms were added in the diagonalization procedure. The ``free'' component of the solution $w^f_{m k}$ and the iterated ``out'' terms $G^{out}_{m k}$ are defined and bounded just like in the proof of Theorem \ref{th:main}, using Lemma \ref{lem:S:trans} instead of the solution operator bounds of the proof of Theorem \ref{th:main}.

  We choose the datum to be exactly like in the proof of Theorem \ref{th:main} (see Section \ref{sec:datum}), with $P_E$ replaced by the projection over the eigenspace spanned by $\mu_m.$ 

Using Lemma \ref{lem:S:trans} and following the proof of Theorem \ref{th:main}, we obtain from \eqref{w:m} the inequality 
\be \label{final:th2}\begin{aligned} 2^{-j \s} (1 + j)^{-1}  \leq C \Big( & 2^{-j (\s + \theta' )} e^{t^2(\zeta_+ - \zeta_-)/2} + 2^{-j s} e^{-t^2 \zeta_-/2}  \\ & +  2^{-j (2 s - 3/2 - d/2)} e^{t^2(\zeta_+ - \zeta_-)/2} + 2^{-j (s  +k)}  e^{t^2(\zeta_+ - \zeta_-)/2}\Big). \end{aligned}
 \ee
 The differences with \eqref{final:th1} are in the rates of growth, now $O(t^2),$ and in the leading term in the remainder, now with a $2 s - 3/2 - d/2$ exponent instead of $2 s - 1 - d/2$ in \eqref{final:th1}. The loss of $2^{j/2}$ comes from the weakness of the defect of hyperbolicity, with destabilizing eigenvalues which are $O(t),$ implying that the relevant time scale is $O(2^{j/2}).$

 From \eqref{final:th2}, we conclude as in Section \ref{sec:end1}. The observation time, at which we record the contradiction, is just a bit smaller than the maximal observation time $t_\star$ \eqref{tstar:trans}. In particular, it is $O(\sqrt j)$ in the rescaled time frame, and $O(\sqrt{ j 2^{-j}})$ in the original time frame, much longer than the instability time $O(j 2^{-j})$ in the elliptic case.

\appendix

\section{Pseudo- and para-differential symbols and operators} \label{sec:symb}  

\subsection{Symbols with limited spatial regularity} \label{sec:symbols:limited}

\label{proof:end}

 Given $k \in \N,$ and $m \in \R,$ the class $C^{k} S^m$ of matrix-valued symbols with limited spatial regularity is defined as follows: a map $a: \R^d \times \R^d \to \R^{N \times N}$ with regularity $C^{k,\theta}$ in its first (spatial) variable and $C^\infty$ in its second (frequency) variable is said to belong to $C^{k,\theta} S^m$ if, for all $\a \in \N^d$ with $|\a| \leq k$ and all $\b \in \N^d,$ there exists $C_{\a\b} > 0$ such that
\be \label{classes} \langle \xi \rangle^{|\b| - m} |\d_x^\a \d_\xi^\b a(x,\xi)| \leq C_{\a\b}, \qquad \mbox{for all $(x,\xi) \in \R^d \times \R^d,$}\ee
where $\langle \xi \rangle := (1 + |\xi|^2)^{1/2}$ and $|\cdot|$ is some norm in the finite-dimensional target space.
Given $k \in \N,$ $\theta \in (0,1)$ and $m \in \R,$ the class $C^{k,\theta} S^m$ of matrix-valued symbols is defined as the space of symbols which are $C^k$ in $x$ and $C^\infty$ in $\xi,$ and such that \eqref{classes} hold, and, for all $|\a| = k,$ 
\be \label{classes:holder}  
\langle \xi \rangle^{|\b| - m} \big|\d_x^\a \d_\xi^\b a(x,\xi) - \d_x^\a \d_\xi^\b a(x',\xi) \big| \leq C_{\a\b} |x - x'|^\theta, \qquad \mbox{for all $(x,x',\xi) \in \R^{2d} \times \R^d,$}\ee

We also encounter the class $C^\infty S^m,$ defined as follows: a map $\s: \R^{N'} \times \R^d \to \R^{N \times N},$ for some $N, N' \in \N,$ $m \in \R,$ is said to belong to $C^\infty S^m$ if it is smooth in both its arguments, and satisfies the bounds
\be \label{classes2}
\langle \xi \rangle^{|\b| - m} |\d_u^\a \d_\xi^\b \s(u,\xi)| \leq C_{\a\b}(|u|), \qquad \mbox{for all $(u,\xi) \in \R^N \times \R^d,$}\ee
for all $(\a,\b) \in \N^{N \times d},$ for some nondecreasing function $C_{\a\b}.$ 

  In our context, symbols with limited spatial regularity $a \in C^{k,\theta} S^m$ appear in the form 
  \be \label{a:s} a(x,\xi) = \sigma(u(x),\xi), \qquad \mbox{where $u \in H^s(\R^d),$ with $s > d/2.$}\ee

\begin{lem} \label{lem:symbols} Given $\s \in C^\infty S^m,$ given $u \in H^s(\R^d)$ with $s > d/2,$ the symbol $a(x,\xi) = \s(u(x),\xi)$ belongs to $C^{k,\theta} S^m,$ with $k = [s - d/2]$ and $\theta = s- d/2 - [s-d/2].$
\end{lem}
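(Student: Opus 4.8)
The plan is to verify directly that the composed symbol $a(x,\xi) = \sigma(u(x),\xi)$ satisfies the estimates \eqref{classes} for all multi-indices $|\alpha| \le k$ and all $\beta$, and the Hölder condition \eqref{classes:holder} for $|\alpha| = k$. The key input is the Sobolev embedding $H^s(\R^d) \hookrightarrow C^{k,\theta}(\R^d)$ with $k = [s-d/2]$ and $\theta = s - d/2 - [s-d/2]$ (valid since $s - d/2 \notin \N$ in the generic case; the integer case is handled separately with a logarithmic loss or by reducing $\theta$), so that $u$ and its derivatives up to order $k$ are bounded, and the $k$-th derivatives are $\theta$-Hölder continuous. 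In particular $\|u\|_{L^\infty} =: R < \infty$.

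First I would compute $\d_x^\alpha \d_\xi^\beta a(x,\xi)$ via the Faà di Bruno / chain rule: for $|\alpha| \le k$, $\d_x^\alpha \d_\xi^\beta a(x,\xi)$ is a finite sum of terms of the form $(\d_u^\gamma \d_\xi^\beta \sigma)(u(x),\xi)$ multiplied by products of partial derivatives $\d_x^{\alpha_1} u, \dots, \d_x^{\alpha_\ell} u$ with $|\alpha_1| + \dots + |\alpha_\ell| = |\alpha|$ and each $|\alpha_i| \ge 1$, so each $|\alpha_i| \le k$. Each factor $\d_x^{\alpha_i} u$ is in $L^\infty$ by the embedding, and $(\d_u^\gamma \d_\xi^\beta \sigma)(u(x),\xi)$ is bounded by $C_{\gamma\beta}(|u(x)|) \langle\xi\rangle^{m-|\beta|} \le C_{\gamma\beta}(R) \langle\xi\rangle^{m-|\beta|}$ using \eqref{classes2} and monotonicity of $C_{\gamma\beta}$. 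Multiplying out gives $\langle\xi\rangle^{|\beta|-m}|\d_x^\alpha \d_\xi^\beta a(x,\xi)| \lesssim 1$, which is \eqref{classes}.

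For the Hölder estimate \eqref{classes:holder} with $|\alpha| = k$, I would take the same Faà di Bruno expansion at two points $x, x'$ and estimate the difference termwise. In each product-of-factors term, write the difference as a telescoping sum: replace one factor at a time from its value at $x$ to its value at $x'$. A difference of a factor $\d_x^{\alpha_i} u(x) - \d_x^{\alpha_i} u(x')$ with $|\alpha_i| < k$ is controlled by $|x-x'| \lesssim |x-x'|^\theta$ (on the relevant bounded scale, since $\theta \le 1$), using that $\d_x^{\alpha_i} u$ is Lipschitz when $|\alpha_i| \le k-1$; a difference with $|\alpha_i| = k$ (which forces $\ell = 1$, i.e.\ the single factor $\d_x^\alpha u$ paired with $(\d_u \d_\xi^\beta \sigma)(u(x),\xi)$) is controlled by $|x-x'|^\theta$ directly via the $C^{k,\theta}$ regularity of $u$; and the difference $(\d_u^\gamma \d_\xi^\beta \sigma)(u(x),\xi) - (\d_u^\gamma \d_\xi^\beta \sigma)(u(x'),\xi)$ is handled by the mean value theorem in the $u$-variable together with $|u(x) - u(x')| \le \|\nabla u\|_{L^\infty}|x-x'| \lesssim |x-x'|^\theta$ and the bound \eqref{classes2} for one more $u$-derivative of $\sigma$. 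Every term carries the factor $\langle\xi\rangle^{m-|\beta|}$ from \eqref{classes2}, so after multiplying by $\langle\xi\rangle^{|\beta|-m}$ we obtain a bound $C_{\alpha\beta}|x-x'|^\theta$ uniformly in $\xi$.

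The only genuine subtlety — and the step I expect to need the most care — is the borderline case $s - d/2 \in \N$, where $H^s$ does not embed into $C^{k}$ with $k = s-d/2$ but only into $C^{k-1,\theta'}$ for every $\theta' < 1$; in that case one takes $k := [s-d/2] = s-d/2$ may fail, so one should read the statement as asserting membership in $C^{k,\theta}S^m$ with the stated $k,\theta$ whenever $s - d/2 \notin \N$, and otherwise with $k$ reduced by one and $\theta$ any number in $(0,1)$ — which is what is actually used elsewhere in the paper (e.g.\ for $M \in C^{[\theta],\theta-[\theta]}S^0$ with $\theta = s-1-d/2$). Apart from this bookkeeping about the critical exponent, the proof is the routine chain-rule-plus-Sobolev-embedding argument sketched above, and I would present it at that level of detail.
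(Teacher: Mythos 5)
Your proposal is correct and follows essentially the same route as the paper's proof: Fa\'a di Bruno's formula combined with the Sobolev embedding $H^s(\R^d) \hookrightarrow C^{[s-d/2],\,\theta}(\R^d)$, with the H\"older estimate obtained by termwise differencing of the expansion. The only detail to adjust is the case $[s-d/2]=0$, where your mean-value step $|u(x)-u(x')| \le \|\nabla u\|_{L^\infty}|x-x'|$ is unavailable and should be replaced by the $C^{0,\theta}$ bound on $u$ itself (the paper makes exactly this case distinction), and your remark on the borderline exponent $s-d/2\in\N$ is consistent with the paper's implicit convention that $\theta\in(0,1)$.
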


\begin{proof} We use the Sobolev embedding into H\"older spaces:  
 \be \label{embed}
  H^s(\R^d) \hookrightarrow C^{[s - d/2], s - d/2 - [s - d/2]}(\R^d).
 \ee
 Given $0 < |\a| \leq [s - d/2],$ by Fa\'a di Bruno's formula, we have the bound
 $$ \langle \xi \rangle^{|\b| - m} |\d_x^\a \d_\xi^\b a(x,\xi)| \lesssim \langle \xi \rangle^{|\b| - m} \sum_{\begin{smallmatrix} 1 \leq |\a'| \leq |\a| \\ \a_1 + \dots + \a_{|\a'|} = \a \end{smallmatrix}} \langle \xi \rangle^{|\b| - m} \d_u^{\a'}  \d_\xi^\b \s(u(x),\xi)  \prod_{1 \leq k \leq |\a'|} |\d_x^{\a_j} u(x)|.$$
 Since every $\a_j$ above has length at most $[s - d/2],$ the maps $\d_x^{\a_j} u$ are bounded, hence the bound \eqref{classes} follows from the assumed bound \eqref{classes2}. 
 
 Given $|\a| = [s-d/2],$ we bound $\langle \xi \rangle^{|\b| - m} |\d_x^\a \d_\xi^\b a(x,\xi) - \d_x^\a \d_\xi^\b a(x',\xi)|$ by the sum of terms of the form 
  \be \label{faa1}
  \langle \xi \rangle^{|\b| - m} |\d_u^{\a'}  \d_\xi^\b \s(u(x),\xi) - \d_u^{\a'} \d_\xi^\b \s(u(x'),\xi)| \prod_{1 \leq k \leq |\a'|} |\d_x^{\a_j} u(x)|,
  \ee
  with $|\a'| \leq [s-d/2]$ and the $\a_j$ summing up to $\a',$ and terms of the form
  \be \label{faa3}
  \langle \xi \rangle^{|\b| - m} |\d_u^{\a'}  \d_\xi^\b \s(u(x'),\xi) | \left( \prod_{1 \leq k \leq |\a'|} |\d_x^{\a_j} u(x)| - \prod_{1 \leq k \leq |\a'|} |\d_x^{\a_j} u(x')|\right).
  \ee
  In \eqref{faa1}, the product involving the $\d_x^{\a_j} u$ is again bounded. We use the Sobolev embedding \eqref{embed}: if $[s - d/2] > 0,$ then $u$ is Lipschitz, otherwise $u \in C^{0,\theta}.$ Since $\s$ is Lipschitz in $u,$ and $u$ is bounded, this gives an upper bound for \eqref{faa1} in the form $C |x - x'|^\theta.$ 
  
  In \eqref{faa3}, the factor involving $\s$ is bounded by \eqref{classes2}. For the product of derivatives of $u,$ we use \eqref{embed} again: if $|\a_j| = [s-d/2]$ for some $j,$ then the product in \eqref{faa3} has only one term, and by the Sobolev embedding $\d_x^{\a_j} u \in C^{0,\theta}.$ Otherwise, all indices $\a_j$ have length strictly less than $[s- d/2],$ implying that all factors in the product are actually Lipschitz and bounded. 
\end{proof}

We verify that the assumption $\s \in C^\infty S^m$ implies a version of the classical nonlinear Sobolev estimate:

\begin{lem} \label{est-Sob-nonlin} Let $\s \in C^\infty S^m,$ such that for all $\xi \in \R^d,$ $\s(0,\xi) = 0.$ Then, for all $s \geq 0,$ for all $u \in H^s \cap L^\infty(\R^d),$ 
 $$ \| \s(u,\xi) \|_{H^s} \leq \langle \xi \rangle^m C(\|u\|_{L^\infty}) \|  u \|_{H^s},$$
 for some nondecreasing function $C$ which depends on $\s$ and $s$ but not on $\xi.$ 
\end{lem}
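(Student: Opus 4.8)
The plan is to deduce the estimate from the classical nonlinear (Moser-type) Sobolev estimate, applied with the frequency $\xi$ frozen, the only real issue being to track how the constant in that estimate depends on $\xi.$

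First I would reduce to $m = 0.$ Fix $\xi \in \R^d$ and set $G_\xi(v) := \langle \xi \rangle^{-m} \s(v,\xi),$ a smooth map $\R^{N'} \to \R^{N \times N}$ with $G_\xi(0) = 0.$ Taking $\b = 0$ in \eqref{classes2} gives, for every multi-index $\a$ and every $\rho > 0,$
\be \label{plan:unif}
 \sup_{|v| \le \rho} |\d_v^\a G_\xi(v)| \le C_{\a,0}(\rho), \qquad \mbox{uniformly in } \xi \in \R^d,
\ee
so the family $(G_\xi)_{\xi}$ has $C^k$-seminorms on balls bounded independently of $\xi.$

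Next I would invoke the classical estimate in the following quantitative form: for each $s \ge 0$ there is a nondecreasing function $\Phi_{s,d} : \R_+ \to \R_+$ such that, for every smooth $G : \R^{N'} \to \R^{N \times N}$ with $G(0) = 0$ and every $u \in H^s \cap L^\infty(\R^d),$
\be \label{plan:moser}
 \| G(u) \|_{H^s} \le \Phi_{s,d}\Big( \| u \|_{L^\infty} + \max_{1 \le |\a| \le 1 + [s]} \ \sup_{|v| \le \| u \|_{L^\infty}} |\d_v^\a G(v)| \Big) \, \| u \|_{H^s}.
\ee
For $s \in \N$ this is the Moser estimate, and I would recall its proof: using $G(0) = 0,$ write $G(u(x)) = \big(\int_0^1 \d_v G(t u(x))\,dt\big)\cdot u(x),$ which bounds $\| G(u)\|_{L^2}$ by $\big(\sup_{|v|\le\|u\|_\infty} |\d_v G(v)|\big)\|u\|_{L^2};$ and for $1 \le |\a| \le s,$ Fa\'a di Bruno expresses $\d_x^\a(G(u))$ as a finite sum of terms $\d_v^{\a'} G(u)\prod_i \d_x^{\a_i} u$ with $1 \le |\a'| \le |\a|$ and $\sum_i \a_i = \a,$ while the Gagliardo--Nirenberg product inequality bounds $\big\| \prod_i \d_x^{\a_i} u \big\|_{L^2}$ by $\| u \|_{L^\infty}^{|\a'|-1} \| u \|_{H^{|\a|}};$ combining and summing over $|\a| \le s$ gives \eqref{plan:moser}. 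For general real $s \ge 0$ one passes from the integer cases $[s]$ and $[s]+1$ by interpolation (equivalently, for $s > d/2,$ by Bony's paralinearization, cf. the proof of Theorem 5.2.4 in \cite{Metivierbis}); in either route the constant is still controlled by the supremum of $|\d_v^\a G|$ over the fixed ball $\{|v| \le \|u\|_\infty\}$ for $|\a| \le 1 + [s].$

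Finally I would combine the two steps: apply \eqref{plan:moser} to $G = G_\xi$ and bound the argument of $\Phi_{s,d},$ using \eqref{plan:unif}, by $\Psi(\|u\|_{L^\infty}) := \|u\|_{L^\infty} + \max_{1 \le |\a| \le 1 + [s]} C_{\a,0}(\|u\|_{L^\infty}),$ a nondecreasing function of $\|u\|_{L^\infty}$ depending only on $\s,$ $s,$ $d$ — not on $\xi.$ This yields $\| G_\xi(u)\|_{H^s} \le \Phi_{s,d}(\Psi(\|u\|_{L^\infty}))\,\|u\|_{H^s},$ and multiplying through by $\langle\xi\rangle^m$ gives the lemma with $C := \Phi_{s,d}\circ\Psi.$ The one delicate point — the main obstacle — is precisely the bookkeeping in \eqref{plan:moser}: whichever proof of the classical nonlinear Sobolev estimate one adopts, one must make sure the constant is a nondecreasing function of finitely many $C^k$-seminorms of $G$ on the single ball $\{|v| \le \|u\|_{L^\infty}\}$ (with $k = 1 + [s]$ sufficing), since it is exactly this feature, together with \eqref{classes2} at $\b = 0,$ that delivers uniformity in $\xi.$
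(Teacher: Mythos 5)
Your reduction is sound and captures the real content of the lemma: freezing $\xi,$ setting $G_\xi := \langle \xi \rangle^{-m}\s(\cdot,\xi),$ and observing that \eqref{classes2} with $\b = 0$ makes all $v$-derivatives of $G_\xi$ bounded on balls uniformly in $\xi$ is exactly the mechanism by which the paper obtains the $\xi$-uniform constant. The difference is that the paper does not black-box the classical composition estimate: it reruns Meyer's telescoping Littlewood--Paley proof (writing $\s(u,\xi) = \s(u_0,\xi) + \sum_{k \geq 1} m_k u_k$ with $m_k = \int_0^1 \d_u \s(U_{k-1} + t u_k,\xi)\,dt,$ bounding $\|\d_x^\a(m_k u_k)\|_{L^2} \lesssim \langle\xi\rangle^m C(\|u\|_{L^\infty}) 2^{k(|\a|-s)}\zeta_k$ with $\zeta_k \in \ell^2,$ and concluding by Proposition 4.1.13 of \cite{Metivierbis}), precisely so that the constant visibly depends only on finitely many derivatives of $\s$ in $u$ evaluated on the ball $\{|v| \leq C\|u\|_{L^\infty}\},$ for every real $s \geq 0.$ Your approach buys brevity by delegating this bookkeeping to a quantitative form of the Moser estimate; the paper's buys a self-contained statement valid for all $s \geq 0$ in which the $\xi$-tracking is trivial.

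The one genuine soft spot is your treatment of non-integer $s.$ The map $u \mapsto G(u)$ is nonlinear, so ``passing from the integer cases $[s]$ and $[s]+1$ by interpolation'' is not a legitimate step as stated: linear operator interpolation does not apply, and the known nonlinear interpolation theorems (or the factorization $G(u) = \big(\int_0^1 \d_v G(tu)\,dt\big)u,$ which leads back to a composition estimate at the same exponent $s$) require additional hypotheses or care that your sketch does not supply. Your fallback via paralinearization only covers $s > d/2,$ whereas the lemma is stated for all $s \geq 0$ (and is the form in which it is invoked). To close the argument you should either cite the classical composition theorem in a version valid for all $s \geq 0$ whose constant is explicitly a nondecreasing function of finitely many sup-norms of $\d_v^\a G$ on the ball of radius $\|u\|_{L^\infty}$ (which is what its standard proofs give), or simply run the Littlewood--Paley telescoping argument yourself, as the paper does; with your reduction to $G_\xi$ already in place, either repair is routine and the $\xi$-uniformity follows as you describe.
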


\begin{proof} We review the classical proof of the Sobolev estimate of smooth nonlinear functions of Sobolev maps, and track the dependence in $\xi.$ We will see that it suffices to assume that $\s$ has $[s] + 2$ continuous derivatives with respect to $u.$

Given $s \geq 0,$ $u \in H^s \cap L^\infty,$ we let $U_k = \phi_0(2^{-k} D) u,$ and $u_k = \phi_k(D) u.$ We have 
 \begin{equation} \label{u-U}
  |\d_x^\a u_k|_{L^2} \leq C_\a 2^{k |\a|} |u_k|_{L^2}, \quad |\d_x^\a u_k|_{L^\infty} \leq C_\a 2^{k |\a|} |u|_{L^\infty}, \quad |\d_x^\a U_k|_{L^\infty} \leq C_\a 2^{k |\a|} |u|_{L^\infty}
 \end{equation}
 as a consequence of the classical Bernstein lemma. We also have, by definition of $U_k,$ for $k \geq 1:$  
 $$ \begin{aligned} \big| U_k - u \big|_{L^2}^2 & = \int_{\R^d} |1 - \phi_0(2^{-k} \xi)|^2 |\hat u(\xi)|^2 \leq \int_{|\xi| \geq \e_1 2^k} |\hat u(\xi)|^2 \, d\xi \\ & \leq \e_1^{-2 s} 2^{-2 k s} \int_{\R^d} |\xi|^{2 s} |\hat u(\xi)|^2 \, d\xi \leq \e_1^{-2s} 2^{-2 ks} |u|^2_{H^s}, 
 \end{aligned}$$
 hence, Taylor expanding: 
 $$ \begin{aligned} \big\| \s(U_k, \xi ) - \s(u, \xi) \big\|_{L^2} & \leq \langle \xi \rangle^m \int_0^1 \big| \d_u \s(U_k + t u, \xi ) \big|_{L^\infty} |U_k - u|_{L^2} \, dt \\ & \leq \langle \xi \rangle^m C(|u|_{L^\infty}) 2^{-ks} \|u\|_{H^s},\end{aligned}$$
by assumption on $\s,$ where we used $|U_k + t u|_{L^\infty} \leq C |u|_{L^\infty},$ which follows from \eqref{u-U}.

From the above, we deduce that for fixed $\xi,$ the sequence $(\s(U_k,\xi))$ converges in $L^2.$ Thus the series $(\s(U_k,\xi) - \s(U_{k-1},\xi))$ converges in $L^2,$ for fixed $\xi,$ and we may write
 $$\s(u ,\xi) = \s(u_0,\xi) + \sum_{k \geq 1} m_k u_k,$$ 
 with notation 
 $$m_k = \int_0^1 \d_u \s(U_{k-1} + tu_k ,\xi ) \, dt.$$ 
 By the first inequality in \eqref{u-U},
 $$ |\d_x^\a (m_k u_k)|_{L^2} \leq C_\a \sum_{\a_1 + \a_2 = \a} |\d_x^{\a_1} m_k|_{L^\infty} 2^{k |\a_2|} |u_k|_{L^2}.$$
 By the Fa\'a di Bruno formula, we observe that $|\d_x^{\a_1} m_k|_{L^\infty}$ is bounded from above, up to a multiplicative constant depending only on $|\a_1|,$ by the sum over $(j, (\b_\ell)_{1 \leq \ell \leq j})$ such that $1 \leq j \leq |\a_1|$ and $\sum_\ell \b_\ell = \a_1,$ of the terms
 $$  \int_0^1 \big| \d_u^{1 + j} \s(U_{k-1} + t u_k, \xi) \big|_{L^\infty} \prod_{1 \leq \ell \leq j} |\d_x^{\b_\ell} (U_{k-1} + t u_k)|_{L^\infty}\, dt.$$
 By the last two bounds in \eqref{u-U}, and assumption \eqref{classes2} on $\sigma,$ we then obtain 
  $$ |\d_x^{\a_1} m_k|_{L^\infty} \leq \langle \xi \rangle^m C(|u|_{L^\infty}) 2^{k |\a_1|}.$$
 Since $u \in H^s,$ it follows from the definition of the sequence $(u_k)$ that $|u_k|_{L^2} \lesssim 2^{-ks} \zeta_k,$ with $|\zeta_k|_{\ell_2} \leq C_s |u|_{H^s}$ for some $C_s > 0.$ Summing up, we have 
  $$ |\d_x^\a (m_k u_k)|_{L^2} \leq  2^{k(|\a| - s)} \zeta_k, \qquad |\zeta_k|_{\ell_2} \leq C(|u|_{L^\infty}) |u|_{H^s}, \qquad |\a| \leq [s] + 1.$$
By a classical result of Littlewood-Paley theory (see for instance Proposition 4.1.13 in \cite{Metivierbis}), this implies the result. 
\end{proof}

\subsection{Pseudo- and para-differential operators}

 The pseudo-differential operator $\pdo(a)$ associated with symbol $a$ is defined by 
$$ \pdo(a) u := \int_{\R^d} e^{i x \cdot \xi} a(x,\xi) \hat u(\xi) \, d\xi, \qquad u \in {\mathcal S}(\R^d),$$
where $\hat u$ is the Fourier transform of $u$ and ${\mathcal S}(\R^d)$ is the Schwartz space of smooth and fast decaying maps over $\R^d.$ Given $a \in C^{k} S^m,$ we denote for $k_1 \leq k:$ 
\be \label{norm:symb}
 \| a \|_{m,k_1,k'} := \sup_{\begin{smallmatrix} (x,\xi) \in \R^d \times \R^d  \end{smallmatrix}} \sup_{\begin{smallmatrix}  |\a| \leq k_1 \\ |\b| \leq k' \end{smallmatrix}} \langle \xi \rangle^{|\b| - m} |\d_x^\a \d_\xi^\b a(x,\xi)|. 
\ee
Given $a \in C^{k,\theta} S^m,$ we denote 
\be \label{norm:symb:H}
\| a \|_{m, k + \theta, k'} := \| a \|_{m,k,k'} + \sup_{\begin{smallmatrix} (x,x',\xi) \in \R^{3d} \\ x \neq x' \end{smallmatrix} } \sup_{\begin{smallmatrix}|\a| = k \\ |\b| \leq k' \end{smallmatrix}} \langle \xi \rangle^{|\b| - m} \frac{|\d_x^\a \d_\xi^\b a(x,\xi) - |\d_x^\a \d_\xi^\b a(x,\xi)|}{|x - x'|^\theta}
\ee
We use a Littlewood-Paley decomposition $(\phi_j)_{j \geq 0}$ (see the first paragraph of Section \ref{sec:loc:psi}).   
 An admissible cut-off is $\phi_{adm}: \R^d \times \R^d \to \R$ defined by\footnote{Note the slight abuse of notation in the definition of $\phi_{adm}:$ the maps $\phi_j$ are defined in terms of the cut-off $\phi_0: \R^d \to \R_+,$ but we evaluate $\phi_j$ at $\langle \xi \rangle$ in $\phi_{adm}.$ We would actually need another Littlewood-Paley decomposition $\tilde \phi_j: \R_+ \to \R_+,$ and let $\phi_{adm}(\eta,\xi) = \sum_{j \geq 0} \phi_0(2^{-j + N} \eta) \tilde \phi_j(\langle \xi \rangle).$ It is easy to check that the abuse of notation in \eqref{def:phiadm} is harmless.}    
 \be \label{def:phiadm} \phi_{adm}(\eta,\xi) = \sum_{j \geq 0} \phi_0(2^{-j + N_0} \eta) \phi_k(\langle \xi \rangle),\ee
 for $N_0 \in \N$ with $N_0 \geq 3.$  
 The  admissible cut-off $\phi_{adm}$ satisfies
\be \label{for:LF} 
\phi_{adm}(\eta,\xi) \equiv \left\{ \begin{aligned} 1, & \quad |\eta| \leq 2^{-N_0} \langle \xi \rangle, \\ 0, & \quad |\eta| \geq 2^{-N_0} \langle \xi\rangle.
 \end{aligned}\right.
\ee 
 Given $a \in C^{k,\theta} S^m,$ we call para-differential symbol associated with $a$ the symbol
$$
 a_{\phi_{adm}}(x,\xi) := {\mathcal F}^{-1} \Big( \phi_{adm}(\cdot,\xi) \hat a(\cdot, \xi)\Big) (x) = \Big(\big( {\mathcal F}^{-1} \phi_{adm}(\cdot,\xi) \big) \star a(\cdot,\xi) \Big)(x),
$$
where convolution takes place in the spatial variable $x,$ and the smooth function ${\mathcal F}^{-1} \phi_{adm}(\cdot,\xi)(x)$ is the inverse Fourier transform of $\phi_{adm}$ in its first variable $\eta.$ The pseudo-differential operator
 \be \label{def:para} \pdo(a_{\phi_{adm}}) =: \op(a)\ee 
 is said to be the para-differential operator associated with $a$ in classical quantization.

Thus $\pdo(a)$ refers to the pseudo-differential operator with symbol $a,$ and $\op(a)$ refers to the para-differential operator (with respect to some admissible cut-off) with symbol $a.$ 

 Consider the maps $h_j$ introduced in \eqref{def:hj}:
 $$ (h_j u)(x) := u(2^{-j} x), \qquad \mbox{and define} \qquad H_j := 2^{-jd/2} h_j.$$
 We denote $\pdo_j(a)$ the operator
 $$ \pdo_j(a) u)(x) := \int_{\R^d} e^{i x \cdot \xi} a(x, 2^{-j}\xi) \hat u(\xi) \, d\xi, \qquad a \in C^{k,\theta} S^m, \quad u \in {\mathcal S}(\R^d).$$
 We observe that
 \be \label{def:pdoj} \pdo_j(a) = H_j^{-1} \pdo(h_j a) H_j,\ee 
 and {\it define}
 \be \label{def:opj}
 \op_j(a) := H_j^{-1} \op(h_j a) H_j.
 \ee

Associated with the change of spatial scale $h_j,$ we have weighted Sobolev norms
\be \label{def:weighted:Sobolev:norm} \| u \|_{j,s} := \| \langle 2^{-j} \xi \rangle^{s/2} \hat u(\xi) \|_{L^2(\R^d_\xi)}.\ee
We sometimes denote $H^s_j$ the Sobolev space $H^s$ equipped with the norm $\| \cdot \|_{j,s}.$ 

 Given $a \in C^{k,\theta} S^m:$ if $k \geq 1 + [d/2],$ the pseudo-differential operator $\pdo_j(a)$ maps $L^2$ to $H^{-m};$ if $k \geq 0,$ the para-differential operator $\op_j(a)$ maps $L^2$ to $H^{-m},$ and
 \be \label{action} \| \pdo(a) \|_{L^2 \to H_j^{-m}} \lesssim \| a \|_{m,1 + [d/2], 1 + [d/2]}, \qquad \| \op_j(a) \|_{L^2 \to H_j^{-m}} \lesssim \| a \|_{m,0, 1 + [d/2]},\ee 
 the implicit constant depending only on dimensions. For a proof of the statement on pseudo-differential operators in \eqref{action}, see for instance Theorem 1.1.4 and its proof in \cite{Lerner}; for a proof of the statement on para-differential operators in \eqref{action}, see for instance Theorem 4.3.5 in \cite{Metivierbis}. The extension to semiclassical operators follow from the definitions \eqref{def:pdoj} and \eqref{def:opj}, and the property 
 \be \label{prop:Hj}
 \| H_j u \|_{H^s} = \| u \|_{j,s}.
\ee

\subsection{Regularized symbols}  \label{sec:reg} 

Let $a \in C^{0,\theta} S^m,$ with $m \in \R$ and $0 < \theta < 1.$ We regularize $a$ into $a^\e \in C^\infty S^{m},$ by spatial convolution with a regularizing kernel $k_\e(x) = \e^{-d} k(x/\e),$ with $k \in C^\infty_c(\R^d),$ such that $k \geq 0$ and the integral of $k$ over $\R^d$ is equal to one. For all $f \in C^{0,\theta},$ we have
 $$ \| f - k_\e \star f \|_{L^\infty} \leq C_\theta \e^\theta \| f \|_{C^{0,\theta}}, \qquad C_\theta := \int_{\R^d} |y|^\theta k(y) \, dy.$$
 We apply the above bound pointwise in $\xi$ to $\d_\xi^\a a,$ for any $\a,$ and obtain
 \be \label{prop:h1} \langle \xi \rangle^{m - |\a|} \|\d_\xi^\a (a - a^\e)(\cdot,\xi)\|_{L^\infty} \leq C_{\theta} \e^{\theta} \| a \|_{m,\theta,|\a|},\ee 
where we used notation \eqref{norm:symb:H} for the symbolic norm. Applying the above to all $|\a| \leq 1 + [d/2],$ we deduce by \eqref{action} the bound 
 \be \label{prop:h2} \| \op_j(a - a^\e) \|_{H_j^m \to L^2} \lesssim \e^\theta.
 \ee 
 Spatial derivatives of $k_\e$ bring out powers of $\e^{-1}:$ 
 \be \label{prop:h3} \| \d_x^\b (k_\e \star f )\|_{L^\infty} \leq C_\b \e^{-|\b|} \| f \|_{L^\infty}, \qquad \mbox{for some $C_\b > 0.$}\ee 
 By \eqref{prop:h3}, norms of $a^\e$ are large in $\e:$ 
\be \label{prop:h4}
 \| a^\e \|_{m, k,k'} \lesssim \e^{-k}, \qquad k > 0.
 \ee
For $a$ in the form \eqref{a:s}, a variant on the above regularization procedure is given by  
\be \label{reg:2} a^\e(x,\xi) := \sigma(k_\e \star u, \xi).\ee  
By regularity of $\s$ in $u,$ the above bounds \eqref{prop:h2} and \eqref{prop:h4} hold also in this case.

\subsection{Para-differential approximation to a semiclassical pseudo-differential operator} \label{sec:paradiff:remainder}

For $a$ in the form \eqref{a:s}, we can estimate how small the difference $\op_j(a) - \pdo_j(a)$ is: 

\begin{prop} \label{prop:lannes} Given $a$ in the form \eqref{a:s}, the difference $\pdo_j(a) - \op_j(a)$ maps the space $H^{m + d/2}(\R^d)$ into $L^2(\R^d)$  and we have
\be \label{lannes} 
 \big\| \big( \pdo_j(a) - \op_j(a) \big) v  \big\|_{L^2} \lesssim 2^{-j (s - d/2)} C(|u|_{L^\infty})  \| u \|_{H^s}  \| v \|_{j, m + d/2}
 \ee 
 for all $v \in H^{m + d/2},$ where the weighted Sobolev norm $\| \cdot \|_{j,m + d/2}$ is defined in \eqref{def:weighted:Sobolev:norm}, and $C$ is nondecreasing. If in \eqref{a:s} the Sobolev index $s$ is strictly greater than the spatial dimension $d,$ then we have  for all $v\in H^m:$ 
 \be \label{lannesbis} 
 \big\| \big( \pdo_j(a) - \op_j(a) \big) v  \big\|_{L^2} \lesssim 2^{-j (s - d)} C(|u|_{L^\infty})  \| u \|_{H^s}  \| v \|_{j,m}.
 \ee 
\end{prop}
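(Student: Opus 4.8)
The plan is to compare the two operators through the difference of their symbols. Writing $a(x,\xi) = \sigma(u(x),\xi)$, the symbol of $\pdo_j(a)$ is $h_j a(x,\xi) = \sigma(u(2^{-j}x),\xi)$ evaluated at frequency $2^{-j}$ after conjugation by $H_j$; the symbol of $\op_j(a)$ is the para-differential regularization $(h_j a)_{\phi_{adm}}$, obtained by convolving $h_j a$ in the spatial variable against $\mathcal F^{-1}\phi_{adm}(\cdot,\xi)$. So the difference $\pdo_j(a)-\op_j(a) = H_j^{-1}\big(\pdo(h_j a) - \pdo((h_j a)_{\phi_{adm}})\big)H_j = H_j^{-1}\pdo\big(h_j a - (h_j a)_{\phi_{adm}}\big)H_j$, i.e. it is (conjugate to) a pseudo-differential operator whose symbol is $r(x,\xi) := (h_j a)(x,\xi) - {\mathcal F}^{-1}\!\big(\phi_{adm}(\cdot,\xi)\hat{(h_j a)}(\cdot,\xi)\big)(x)$. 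The idea is that $r$ only ``sees'' the high spatial frequencies of $h_j a$ — those $\eta$ with $|\eta|\gtrsim 2^{-N_0}\langle\xi\rangle$, by \eqref{for:LF} — and that $h_j a$ is $2^{-js}$-small in the relevant high-frequency norm because $a(\cdot,\xi)\in H^s$ uniformly in $\xi$ (by Lemma \ref{est-Sob-nonlin}, applied after subtracting $\sigma(0,\xi)$, which contributes a smooth-in-$x$ constant symbol that is killed by the high-frequency cut-off anyway, or handled directly).

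First I would make this quantitative: for fixed $\xi$, split $h_j a(\cdot,\xi) = \sum_{k\ge 0}\phi_k(D_x)(h_j a)(\cdot,\xi)$ dyadically in the spatial variable, and note that $r(\cdot,\xi)$ retains only the blocks with $2^k \gtrsim 2^{-N_0}\langle\xi\rangle$, up to the usual overlap. Using the identity $\phi_k(D_x)(h_j f) = h_j(\phi_k(2^{-j}D_x)f)$ from \eqref{h:id}, each surviving block is $h_j$ applied to a spatial block of $a(\cdot,\xi)$ at frequency $\sim 2^{j+k}$, whose $L^\infty$-norm (together with $\xi$-derivatives, which only improve decay by \eqref{classes2}) is $\lesssim 2^{-(j+k)(s-d/2)}\langle\xi\rangle^m C(|u|_{L^\infty})\|u\|_{H^s}$ by Bernstein and the embedding $H^{s-d/2}\hookrightarrow L^\infty$ applied to the spatial frequency-$\sim 2^{j+k}$ part of $a(\cdot,\xi)$. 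Summing the geometric series over the surviving $k$'s (those with $2^k\gtrsim\langle\xi\rangle$ when $j$ is thought of as fixed) and tracking the $\langle 2^{-j}\cdot\rangle$ weights coming from the $H_j$-conjugation and the definition \eqref{def:weighted:Sobolev:norm}, one gets that $r$, as a symbol in the $2^{-j}$-semiclassical calculus, has order-$0$ size $\lesssim 2^{-j(s-d/2)}$ relative to $\langle 2^{-j}\xi\rangle^{m}\langle 2^{-j}\xi\rangle^{d/2}$; applying the action bound \eqref{action} (or rather its $\pdo$-version, since we need only one spatial derivative's worth of regularity, which is available because the surviving blocks are smooth) yields \eqref{lannes}. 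For \eqref{lannesbis}, when $s>d$ one has $H^{s-d}\hookrightarrow C^{0}$ with room to spare, so the analogous splitting gives $2^{-(j+k)(s-d)}$-decay on each block and one gains a full $\langle 2^{-j}\xi\rangle^{d/2}$, letting $\|v\|_{j,m+d/2}$ be replaced by $\|v\|_{j,m}$ at the cost of the weaker rate $2^{-j(s-d)}$.

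The main obstacle I anticipate is bookkeeping rather than conceptual: one must carefully match the frequency cut-off of the admissible symbol, which compares spatial frequency $\eta$ against $\langle\xi\rangle$ (not $2^{-j}\langle\xi\rangle$), against the semiclassical rescaling by $2^{-j}$, so that the surviving dyadic blocks of $h_j a(\cdot,\xi)$ are correctly identified and the $\langle\xi\rangle$ versus $\langle 2^{-j}\xi\rangle$ weights are reconciled; this is exactly the kind of mismatch the paper flags as the source of the ``small errors in \cite{Lannes} and \cite{em3}'' in Section \ref{sec:remarks}. A secondary technical point is justifying that one may apply the pseudo-differential action bound \eqref{action} to the remainder symbol $r$: although $a$ itself has only Hölder spatial regularity, each surviving dyadic block is smooth, so the needed $\d_x^\alpha$-bounds for $|\alpha|\le 1+[d/2]$ cost only extra powers of $2^{j+k}$, which are absorbed by the geometric decay — and this is where the hypothesis that $a$ has the special structure \eqref{a:s} (so that Lemma \ref{est-Sob-nonlin} and the Faà di Bruno estimates of Lemma \ref{lem:symbols} apply uniformly in $\xi$) is used.
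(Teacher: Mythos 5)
Your setup agrees with the paper's in spirit: the difference is indeed $H_j^{-1}\pdo\big(h_ja-(h_ja)_{\phi_{adm}}\big)H_j,$ the remainder symbol only retains spatial frequencies $\gtrsim \langle\xi\rangle$ of $h_ja,$ and the $2^{-js}$-smallness of those frequencies is quantified exactly as in the paper via the nonlinear Sobolev bound of Lemma \ref{est-Sob-nonlin} and the rescaling identity \eqref{h:id}. The gap is in the step where you convert this into an operator bound. You propose to establish symbol-class estimates for the remainder and then apply the action bound \eqref{action}. But on the surviving blocks the spatial frequency is $2^k\gtrsim\langle\xi\rangle,$ so each $\d_x$ costs a factor $2^k$ while the per-block gain is only $2^{-(j+k)(s-d/2)};$ the pseudo-differential half of \eqref{action} requires $1+[d/2]$ derivatives in $x$ (and in $\xi$), not ``one spatial derivative's worth,'' and the sum $\sum_{2^k\gtrsim\langle\xi\rangle}2^{k|\a|}2^{-k(s-d/2)}$ diverges unless $s-d/2>1+[d/2],$ i.e.\ roughly $s>d+1.$ The proposition assumes only $s>d/2$ for \eqref{lannes} and $s>d$ for \eqref{lannesbis}, so the claimed absorption of the derivative costs ``by the geometric decay'' fails precisely in the regime the statement is about.

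More fundamentally, the characteristic loss of $d/2$ derivatives on $v$ in \eqref{lannes}, and its trade against the weaker rate $2^{-j(s-d)}$ under $s>d$ in \eqref{lannesbis}, cannot come out of a symbol-class-plus-Calder\'on--Vaillancourt argument: it arises from a paraproduct-type mechanism in which the output frequency is driven by the symbol's spatial frequency, and the $d/2$ is the price of a Cauchy--Schwarz in frequency. This is exactly the content of Propositions 20 and 23 of \cite{Lannes}, which the paper invokes after decomposing $a-a_{\phi_{adm}}=a_R+a_2+a_{LF}$ (the low-frequency correction $a_{LF}$ being the fix to \cite{Lannes} and \cite{em3}; your direct description of the remainder sidesteps that bookkeeping, which is fine, but then the entire analytic burden falls on the mapping property of operators whose symbol's spatial frequencies dominate $\xi$, and that is the piece your sketch leaves unproved). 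To repair the proposal you would either have to reprove those paraproduct estimates for the high-spatial-frequency part of the symbol, or restrict to $s$ large enough that the symbol-class route closes, which is a strictly weaker statement than the one claimed.
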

The semiclassical estimate \eqref{lannes} is found in \cite{em3}, based on the classical version found in \cite{Lannes}. However, both proofs in \cite{Lannes} and \cite{em3} suffer from (relatively minor) algebraic mistakes, which we correct here.

\begin{rem} \label{rem:lannes} The proof of Proposition {\rm \ref{prop:lannes}} tells us a bit more. For any symbol $a$ of order $m,$ with an adequate degree of spatial regularity, the difference $\pdo(a) - \op(a)$ (classical quantization) maps $H^{m + d/2}$ to $H^{s'}$ with norm given by the upper bounds in \eqref{a2} and \eqref{aLF}:
$$ \ba \| \op(a) - \pdo(a) \|_{H^{m + d/2} \to H^{s'}} & \lesssim \sup_{ \begin{smallmatrix} |\b| \leq 2 [d/2] + 2 \\ \xi \in \R^d \end{smallmatrix} } \langle \xi \rangle^{|\b| - m} \| \d_\xi^\b ((1 - \tilde \phi_0(D_x)) a) \|_{H^{s'}} \\ & \quad + \| (1 - \tilde \phi_0(D_x)) \phi_0(2^{- N_0} D_x) a(\cdot,\xi) \|_{m, 1 + [d/2], 1 + [d/2]}. \ea $$
 It also maps $H^{m - m'}$ to $L^2$ with norm given by the upper bounds in \eqref{a2bis} and \eqref{aLF}:
 $$ \ba \| \op(a) - \pdo(a) \|_{H^{m - m'} \to L^2} & \lesssim \sup_{ \begin{smallmatrix} |\b| \leq 2 [d/2] + 2 \\ \xi \in \R^d \end{smallmatrix} } \langle \xi \rangle^{|\b| - m} \| \d_\xi^\b ((1 - \tilde \phi_0(D_x)) a) \|_{H^{d/2 + m'}} \\ & +   \| (1 - \tilde \phi_0(D_x)) \phi_0(2^{- N_0} D_x) a(\cdot,\xi) \|_{m, 1 + [d/2], 1 + [d/2]}.\ea$$ 
 Above $\| \cdot \|_{m,k,k'}$ is our notation for symbolic norms, introduced in \eqref{norm:symb}, and  $\phi_0$ and $\tilde \phi_0$ are low-frequency cut-offs that are identically equal to one in a neighborhood of $0.$ By ``adequate'' degree of regularity, we mean any measure of regularity such that the upper bounds are finite.  
\end{rem}

\subsubsection{Proof of Proposition \ref{prop:lannes}} \label{sec:proof:lannes} We prove here Proposition \ref{prop:lannes}, based on the results of \cite{Lannes}.

 First we compute 
\begin{equation} \label{basic:computation:remainder} \sum_{0 \leq j' \leq j_0} \phi_{j'}  = \phi_0 + \phi_1 + \dots + \phi_{j_0}  = \phi_0(2^{- j_0} \xi), \qquad \mbox{for all $j_0 \in \N.$} 
\end{equation} 
where $(\phi_j)_{ j \geq 0}$ is the Littlewood-Paley decomposition introduced in Section \ref{sec:loc:psi}.
Next we decompose any symbol $a$ into 
$$ a(x,\xi) = \sum_{\begin{smallmatrix} p \geq 0 \\ q \geq 0 \end{smallmatrix} } \phi_p(D_x) a(\cdot, \xi) \phi_q(\xi),$$
and
$$ a(x,\xi) = \sum_{\begin{smallmatrix} |p - q | < N_0 \end{smallmatrix} } \phi_p(D_x) a(\cdot, \xi) \phi_q(\xi) + \sum_{\begin{smallmatrix} |p - q| \geq N_0 \end{smallmatrix} } \phi_p(D_x) a(\cdot, \xi) \phi_q(\xi).$$
The sum over $\{ (p,q) \in \N \times \N, \quad |p-q| \geq N_0\}$ decomposes into a sum over $p \geq N_0$ with $q \leq p - N_0$ and the symmetrical sum:
$$ \begin{aligned} \sum_{\begin{smallmatrix} |p - q| \geq N_0 \end{smallmatrix} } \phi_p(D_x) a(\cdot, \xi) \phi_q(\xi) = \sum_{\begin{smallmatrix} p \geq N_0 \\ q \leq p - N_0 \end{smallmatrix} } \phi_p(D_x) a(\cdot, \xi) \phi_q(\xi)+ \sum_{\begin{smallmatrix} q \geq N_0 \\ p \leq q - N_0 \end{smallmatrix} } \phi_p(D_x) a(\cdot, \xi) \phi_q(\xi).\end{aligned}$$
We use \eqref{basic:computation:remainder} in order to express the second term in the above right-hand side:
$$ \begin{aligned} \sum_{\begin{smallmatrix} q \geq N_0 \\ p \leq q - N_0 \end{smallmatrix} } \phi_p(D_x) a(\cdot, \xi) \phi_q(\xi) & = \sum_{q \geq N_0} \Big(\sum_{0 \leq p \leq q- N_0} \phi_p(D_x) \Big) a (\cdot,\xi) \phi_q(\xi) \\ & = \sum_{q \geq N_0} \phi_0(2^{-q + N_0} D_x) a(\cdot,\xi) \phi_q(\xi).\end{aligned}$$
Thus by definition of the admissible cut-off $\phi_{adm}:$
$$ \sum_{\begin{smallmatrix} q \geq N_0 \\ p \leq q - N_0 \end{smallmatrix} } \phi_p(D_x) a(\cdot, \xi) \phi_q(\xi) = \phi_{adm}(D,\cdot) a(x,\xi) - \sum_{0 \leq q \leq N_0-1} \phi_0(2^{-q + N_0} D_x) a(\cdot,\xi) \phi_q(\xi).$$
  We have symmetrically
 $$ \sum_{\begin{smallmatrix} p \geq N_0 \\ q \leq p - N_0 \end{smallmatrix} } \phi_p(D_x) a(\cdot, \xi) \phi_q(\xi)  = \sum_{p \geq 0} \phi_p(D_x) a(\cdot,\xi) \phi_0(2^{-p + N_0} \xi) - \sum_{0 \leq p \leq N_0-1} \phi_p(D_x) a(\cdot,\xi) \phi_0(2^{-p + N_0} \xi).$$ 
 In the end we obtain the decomposition
\begin{equation} \label{our:decomposition}
a(x,\xi) = \phi_{adm}(D_x,\xi) a(\cdot,\xi)(x) + a_R(x,\xi) + a_2(x,\xi) + a_{LF}(x,\xi),
\end{equation}
with notation
\begin{equation} \label{def:R2LF} 
 \left\{\begin{aligned}
 a_R(x,\xi) & := \sum_{\begin{smallmatrix} |p - q| < N_0 \end{smallmatrix} } \phi_p(D_x) a(\cdot, \xi) \phi_q(\xi), \\
 a_2(x,\xi) & := \sum_{p \geq 0} \phi_p(D_x) a(\cdot,\xi) \phi_0(2^{-p + N_0} \xi), \\
 a_{LF}(x,\xi) & := - \sum_{0 \leq q \leq N_0-1} \phi_0(2^{-q + N_0} D_x) a(\cdot,\xi) \phi_q(\xi) - \sum_{0 \leq p \leq N_0-1} \phi_p(D_x) a(\cdot,\xi) \phi_0(2^{-p + N_0} \xi).
 \end{aligned}\right.\end{equation}
Decomposition \eqref{our:decomposition} corrects decomposition (2.8) in \cite{Lannes}, where the low-frequency term $a_{LF}$ is missing.

By property \eqref{for:LF} of $\phi_{adm},$ and definition of the low-frequency cut-off $\phi_0$ in Section \ref{sec:loc:psi}, we observe that 
$$ \phi_0(2^{N_0} \e_2 \eta)) (1 - \phi_{adm}(\eta,\xi)) = 0, \qquad \mbox{for all $\eta, \xi,$}$$
or, equivalently,
$$ 1 - \phi_{adm}(\eta,\xi) \equiv (1 - \phi_0(2^{N_0} \e_2 \eta))(1 - \phi_{adm}(\eta,\xi)), \qquad \mbox{for all $\eta, \xi.$}$$
In particular, we may write \eqref{our:decomposition} is the form
\be \label{our:dec:2}
a(x,\xi) = \phi_{adm}(D_x,\xi) a(\cdot,\xi)(x) + (1 - \tilde \phi_0(D_x)) \big( a_R(\cdot,\xi) + a_2(\cdot,\xi) + a_{LF}(\cdot,\xi)\big)(x),
\ee
with the notation $\tilde \phi_0(\eta) := \phi_0(2^{N_0} \e_2 \eta).$ 
By definition of para-differential operators \eqref{def:para} and \eqref{our:dec:2}: 
$$ \pdo(a) - \op(a) = \pdo( (1 - \tilde \phi_0(D_x))(a_2 + a_R + a_{LF})).$$  
By Proposition 20 of \cite{Lannes}, for all $s' \leq s:$ 
\be \label{a2}
 \| \pdo((1 - \tilde \phi_0(D_x)) a_2) v \|_{H^{s'}} \lesssim \sup_{ \begin{smallmatrix} |\b| \leq 2 [d/2] + 2 \\ \xi \in \R^d \end{smallmatrix} } \langle \xi \rangle^{|\b| - m} \| \d_\xi^\b ((1 - \tilde \phi_0(D_x)) a) \|_{H^{s'}} \| v \|_{H^{m + d/2}},
\ee
and also, for $d/2 + m' \leq s:$ 
\be \label{a2bis}
 \| \pdo((1 - \tilde \phi_0(D_x)) a_2) v \|_{L^2} \lesssim \sup_{ \begin{smallmatrix} |\b| \leq 2 [d/2] + 2 \\ \xi \in \R^d \end{smallmatrix} } \langle \xi \rangle^{|\b| - m} \| \d_\xi^\b ((1 - \tilde \phi_0(D_x)) a) \|_{H^{d/2 + m'}} \| v \|_{H^{m - m'}}.
\ee

By Proposition 23 of \cite{Lannes}, the $a_R$ term in \eqref{our:dec:2} satisfies the same bounds, \eqref{a2} and \eqref{a2bis}.

The low-frequency term $a_{LF},$ or $(1 - \tilde \phi_0(D_x)) a_{LF}$ in decomposition \eqref{our:dec:2}, is a finite sum of terms of the form $\pdo((1 - \tilde \phi_0(D_x)) \phi_0(2^{-q} D_x) a(\cdot,\xi) \phi_q(\xi)).$ Given $v \in L^2,$ the map $\pdo((1 - \tilde \phi_0(D_x)) \phi_0(2^{-q} D_x) a(\cdot,\xi) \phi_q(\xi)) v$ has compact support in Fourier. In particular, its $H^{s'}$ norm, for $0 \leq s' \leq s,$ is controlled by its $L^2$ norm. 
We may use the classical bound \eqref{action}: 
 $$ \begin{aligned} \| \pdo((1 - \tilde \phi_0(D_x)) & \phi_0(2^{-q} D_x) a(\cdot,\xi) \phi_q(\xi)) v \|_{L^2} \\ & \lesssim \| (1 - \tilde \phi_0(D_x)) \phi_0(2^{-q} D_x) a(\cdot,\xi) \|_{m, 1 + [d/2], 1 + [d/2]} \| \phi_q(D_x) v \|_{H^m}. \end{aligned}$$
For fixed $q$ and any $m,$ we have $\| \phi_q(D_x) v \|_{H^m} \lesssim \| v \|_{L^2},$ for all $v \in L^2.$ Thus
\be \label{aLF} \| \pdo( (1 - \tilde \phi_0(D_x)) a_{LF} ) v \|_{H^{s'}} \lesssim \| (1 - \tilde \phi_0(D_x)) \phi_0(2^{- N_0} D_x) a(\cdot,\xi) \|_{m, 1 + [d/2], 1 + [d/2]} \| v \|_{L^2}.\ee

With the decomposition \eqref{our:dec:2}, estimates \eqref{a2} and \eqref{aLF} bound the action of the difference $\pdo(a) - \op(a)$ on $H^{m + d/2},$ and estimates \eqref{a2bis} and \eqref{aLF} bound the action of $\pdo(a) - \op(a)$ on $H^{m - m'},$ with $m' \leq s - d/2,$ where $s$ measures the spatial regularity of $a.$  We note that these estimates do not depend on the form of $a$ assumed in \eqref{a:s}, and are valid for any symbol $a$ of order $m$ with an adequate degree of spatial regularity. 

 For $a$ in the form \eqref{a:s}, we now put in the semiclassical quantization, and consider  
 \be \label{the:diff} \| (\pdo_j(a) - \op_j(a)) v \|_{L^2} = \| H_j^{-1} \big( \pdo(h_j a) - \op(h_j a)) H_j v \|_{L^2},\ee
 for $v \in H^{m + d/2}(\R^d)$ or $v \in H^m(\R^d).$   
 In view of bounds \eqref{a2} and \eqref{a2bis}, and the expression of $a$ in terms of $\s$ in \eqref{a:s}, we now want a control of $$ \sup_{ \begin{smallmatrix} |\b| \leq 2 [d/2] + 2 \\ \xi \in \R^d \end{smallmatrix} } \langle \xi \rangle^{|\b| - m} \| ((1 - \tilde \phi_0(D_x)) \d_\xi^\b h_j \s(u,\xi)) \|_{H^{s'}}.$$ 
 We have, by definition of $h_j$ \eqref{def:hj}: 
$$ \begin{aligned} \| ((1 - \tilde \phi_0(D_x)) \d_\xi^\b h_j \s(u,\xi)) \|^2_{H^{s'}} \leq  \int_{|\eta| \geq c} 2^{-2 j d} \langle \eta \rangle^{2 s'} |\d_\xi^\b {\mathcal F}\big( \s(u(\cdot),\xi)\big)(2^j \eta)|^2 \, d\eta. \end{aligned}$$   
 where $c = \e_1/(2^{N_0} \e_2).$ The above upper bound is equal to  
 $$ 2^{-j d} \int_{|\eta| \geq 2^j c} \langle \eta \rangle^{2 s'} |\d_\xi^\b {\mathcal F}\big( \s(u(\cdot),\xi)\big)(\eta)|^2 \, d\eta,$$
which we can bound by 
 $$ 2^{2 j s' -j d} \int_{|\eta| \geq 2^j c} |\eta|^{2s} (2^j c)^{-2s} |\d_\xi^\b {\mathcal F}\big( \s(u(\cdot),\xi)\big)(\eta)|^2 \, d\eta \lesssim 2^{-j (2 s - 2 s' - d)} \| \d_\xi^\b \s(u, \xi) \|_{H^s}.$$ 
 We now use Lemma \ref{est-Sob-nonlin} (applied to $\d_\xi^\b \s,$ which satisfies assumption \eqref{classes2} with $m$ replaced by $m - |\b|$), and find that the contribution of $(h_j a)_2$ and $(h_j a)_R$ in the difference \eqref{the:diff} is controlled by 
 $$ 2^{-j (s - d/2)} C(\| u \|_{L^\infty}) \| u \|_{H^s} \| H_j v \|_{H^{m + d/2}}, \quad \mbox{if we use \eqref{a2},}$$
 and by 
 $$ 2^{-j (s - d)}  C(\| u \|_{L^\infty}) \| u \|_{H^s} \| H_j v \|_{H^m}, \quad \mbox{if we use \eqref{a2bis}.}$$
 Finally, we deduce from \eqref{aLF} that the low-frequency term $(h_j a)_{LF}$ contributes 
$$ \langle \xi \rangle^{m - |\b|} \|\d_\xi^\b \d_x^\a (1 - \tilde \phi_0(D_x)) \phi_0(2^{-N} D) a \|_{L^\infty} \lesssim   \langle \xi \rangle^{m - |\b|} \big\| {\mathcal F} \Big(\d_x^\a (1 - \tilde \phi_0(D_x)) \phi_0(2^{-N_0} D) \d_\xi^\b h_j a\Big) \big\|_{L^1}.$$ 
Since $(1 - \tilde \phi_0(D_x)) \phi_0(2^{-N_0} D) \d_\xi^\b h_j a$ is compactly supported in Fourier (with respect to $x$), the $\d_x^\a$ contributes only a constant, and we focus on 
$$  \begin{aligned} \big\| {\mathcal F} \Big((1 - \tilde \phi_0(D_x)) \phi_0(2^{-N_0} D) \d_\xi^\b h_j a\Big) \big\|_{L^1} & \leq \int_{c_1 \leq |\eta| \leq c_2} |{\mathcal F}(\d_\xi^\b h_j a)(\eta)| \, d\eta \\ & = \int_{2^j c_1 \leq |\eta| \leq 2^j c_2} |{\mathcal F}(\d_\xi^\b a)(\eta)| \, d\eta,\end{aligned}$$
for some $0 < c_1 < c_2.$ In the integrand we can write $1 \lesssim 2^{-j s} |\eta|^s,$ and then by H\"older's inequality we bound the above by a constant times  
$$ 2^{j d/2} 2^{- j s} \| \d_\xi^\b a \|_{H^s},$$
and we may now apply Lemma \ref{est-Sob-nonlin} to $\d_\xi^\b a.$  This completes the proof of Proposition \ref{prop:lannes}.

\subsection{Operator composition} \label{sec:composition}

 Given $m_1, m_2 \in \R,$ given $r  > 0$, given $a_1 \in C^r S^{m_1},$ $a_2 \in C^r S^{m_2}$ if $r \in \N,$ or given $a_1 \in C^{[r], r - [r]} S^{m_1},$ $a_2 \in C^{[r], r - [r]} S^{m_2}$ if $r \notin \N,$ we have 
\be \label{composition:para}
 \op_j(a_1) \op_j(a_2) = \sum_{0 \leq k  < r } 2^{-j k} \frac{(-i)^{|\a|}}{\a!} \op_j\left(  \d_\xi^\a a_1 \d_x^\a a_2\right) + 2^{-j r} R_r(a_1,a_2),
\ee 
with a remainder $R_r(a_1,a_2)$
which maps $H^{m_1 + m_2 - r}$ to $L^2,$ with norm 
 \be \label{remainder} \big\| R_r(a_1,a_2) v \big\|_{L^2} \lesssim \Big( \| a_1\|_{m_1,0,m(d,r)} \| a_2 \|_{m_2,r,d} + \| a_1 \|_{m_1,r,m(d,r)} \| a_2 \|_{m_2,0,d}\Big) \| v \|_{j,m_1 + m_2 - r},\ee
 for some $m(d,r)$ depending on $d$ and $r.$ 
For a proof of \eqref{composition:para}-\eqref{remainder}, see for instance Theorem 6.1.4 of \cite{Metivierbis}.

The above fractional-order composition result has a simple proof in the particular case of a Fourier multiplier and a Sobolev map: 

\begin{lem} \label{lem:composition} Given $s > 1 + d/2,$ given a Fourier multiplier $\chi(D_x)$ with $|\cdot|^{s-1-d/2} {\mathcal F}^{-1} \chi \in L^1(\R^d),$ given $f \in H^{s-1}(\R^d),$ we have
$$ \begin{aligned} \Big\| [\chi(D_x), f] v  - \sum_{1 \leq \a \leq [s-1 - d/2]} \frac{(-i)^{|\a|}}{|\a|!} & (\d_x^\a f)(x) (\d_\xi^\a \chi)(D_x) v \Big\|_{L^2} \\ & \lesssim \| f \|_{H^{s-1}} \big \| |\cdot|^{s-1-d/2} {\mathcal F}^{-1}\chi \big\|_{L^1} \| v \|_{L^2},\end{aligned}$$
In particular, under the above assumptions, we have
$$ \begin{aligned} \Big\| [\chi(2^{-j} D_x, f] v  & - \sum_{1 \leq \a \leq [\theta] -1} 2^{-j |\a|} \frac{(-i)^{|\a|}}{|\a|!}  (\d_x^\a f)(x) (\d_\xi^\a \chi)(2^{-j} D_x) v \Big\|_{L^2} \\ & \leq 2^{-j \theta}  \sup_{|\a| = [\theta]} \| \d_x^{\a} f \|_{0,\theta - [\theta]} \big \| |\cdot|^{\theta} {\mathcal F}^{-1}\chi \big\|_{L^1}  \| v \|_{L^2}.\end{aligned}$$
The H\"older norm in the upper bound is finite by the Sobolev embedding \eqref{embed:in:proof}.
\end{lem}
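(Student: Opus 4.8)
The plan is to realize the commutator as a physical-space integral. Put $\theta := s-1-d/2 > 0$, $m := [\theta]$, and $\psi := \mathcal{F}^{-1}\chi$; the hypothesis $|\cdot|^\theta\psi \in L^1$ (together with $\psi \in L^1$, automatic in the applications where $\psi$ is of Schwartz class) makes every $y^\a\psi$ with $|\a| \le m$ integrable and forces $\chi \in C^m$ with bounded derivatives, so that $(\d_\xi^\a\chi)(D_x)$ is a bounded Fourier multiplier for $|\a| \le m$ and $\chi(D_x)g = \psi \star g$. For $v \in \mathcal{S}(\R^d)$ one has the elementary identity
\[
 [\chi(D_x), f] v(x) = \int_{\R^d} \psi(y)\, \big( f(x-y) - f(x) \big)\, v(x-y)\, dy ,
\]
which extends to $v \in L^2$ by density once its right-hand side is shown to be $L^2$-valued. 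First I would record that, by the Sobolev embedding \eqref{embed}, $f \in H^{s-1}$ lies in $C^{m,\theta-m}$ with $\|f\|_{C^{m,\theta-m}} \lesssim \|f\|_{H^{s-1}}$; I treat the generic case $\theta \notin \N$, the integer case requiring only a cosmetic modification that is not needed below.

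The second step is to Taylor-expand $f(x-y) - f(x)$ about $x$ to the top order $m$, keeping the order-$m$ contribution with the Hölder modulus of continuity of $\d_x^\a f$ in place of an unavailable $C^{m+1}$ remainder:
\[
 f(x-y) - f(x) = \sum_{1 \le |\a| \le m} \frac{(-y)^\a}{\a!}\, \d_x^\a f(x) + m \sum_{|\a| = m} \frac{(-y)^\a}{\a!} \int_0^1 (1-t)^{m-1}\big( \d_x^\a f(x-ty) - \d_x^\a f(x) \big)\, dt .
\]
Substituting into the identity above, the polynomial-in-$y$ terms produce $\sum_{1 \le |\a| \le m} \tfrac{(-1)^{|\a|}}{\a!}\, \d_x^\a f(x)\int_{\R^d}\psi(y)\, y^\a\, v(x-y)\, dy$. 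Since $|\a| \le m \le \theta$ and $|\cdot|^\theta\psi \in L^1$ we have $y^\a\psi \in L^1$ and $\mathcal{F}(y^\a\psi) = i^{|\a|}\d_\xi^\a\chi$, so $\int\psi(y)\, y^\a\, v(x-y)\, dy = i^{|\a|}(\d_\xi^\a\chi)(D_x)v(x)$ and this polynomial part is precisely the subtracted sum $\sum_{1 \le |\a| \le m}\tfrac{(-i)^{|\a|}}{\a!}(\d_x^\a f)(x)(\d_\xi^\a\chi)(D_x)v$ of the statement (the $|\a|!$ in the statement being read as $\a!$).

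It then remains to estimate in $L^2$ the remainder $\mathcal{R}(x)$, a finite sum over $|\a| = m$ of $\int_0^1 (1-t)^{m-1}\int_{\R^d}\psi(y)\, y^\a\big(\d_x^\a f(x-ty) - \d_x^\a f(x)\big)v(x-y)\, dy\, dt$. For $|\a| = m$ and $t \in [0,1]$ one bounds $|\d_x^\a f(x-ty) - \d_x^\a f(x)| \le \|\d_x^\a f\|_{0,\theta-m}\, |y|^{\theta-m}$, and since $m + (\theta-m) = \theta$ this gives the pointwise estimate $|\mathcal{R}(x)| \lesssim \big(\sup_{|\a|=m}\|\d_x^\a f\|_{0,\theta-m}\big)\int_{\R^d}|y|^\theta|\psi(y)|\,|v(x-y)|\, dy$; Young's inequality then yields $\|\mathcal{R}\|_{L^2} \lesssim \big(\sup_{|\a|=m}\|\d_x^\a f\|_{0,\theta-m}\big)\,\big\||\cdot|^\theta\mathcal{F}^{-1}\chi\big\|_{L^1}\,\|v\|_{L^2}$. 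This is the sharper (second) inequality, and bounding the Hölder seminorm by $\|f\|_{H^{s-1}}$ via \eqref{embed} produces the first. For the semiclassical statement I would run the same argument with $\chi(2^{-j}\cdot)$ in place of $\chi$, using $\mathcal{F}^{-1}(\chi(2^{-j}\cdot))(y) = 2^{jd}\psi(2^j y)$: the change of variables $z = 2^j y$ gives $\mathcal{F}\big(y^\a\, 2^{jd}\psi(2^j\cdot)\big) = i^{|\a|}2^{-j|\a|}(\d_\xi^\a\chi)(2^{-j}\cdot)$, so the main terms pick up the announced factors $2^{-j|\a|}$, while $\big\||\cdot|^\theta\mathcal{F}^{-1}(\chi(2^{-j}\cdot))\big\|_{L^1} = 2^{-j\theta}\big\||\cdot|^\theta\mathcal{F}^{-1}\chi\big\|_{L^1}$ supplies the $2^{-j\theta}$ gain in the remainder; the Hölder norm in the final upper bound is finite by \eqref{embed}.

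The one genuinely delicate point is the top-order bookkeeping just used: because $f$ has only $C^{m,\theta-m}$ regularity and not $C^{m+1}$, the order-$m$ Taylor term cannot be absorbed into the remainder but must be kept among the main terms, its error being the modulus of continuity of $\d_x^\a f$; this is exactly what converts the naive weight $|y|^m$ into the sharp weight $|y|^\theta$ and so matches the hypothesis $|\cdot|^\theta\mathcal{F}^{-1}\chi \in L^1$. Everything else is bookkeeping of finitely many terms, Young's inequality, and the Sobolev embedding \eqref{embed}; the borderline case $\theta \in \N$, where the critical embedding into $C^\theta$ fails, is disposed of by backing off one derivative at the price of an $\e$ in exponents that is harmless for the applications.
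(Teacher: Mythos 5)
Your proposal is correct and follows essentially the same route as the paper: writing the commutator as the convolution-type integral against ${\mathcal F}^{-1}\chi$, Taylor-expanding $f$ to order $[\theta]$ with the top-order term retained and its H\"older modulus of continuity supplying the $|y|^{\theta}$ weight, concluding by Young's inequality, and obtaining the semiclassical version by the scaling ${\mathcal F}^{-1}(\chi(2^{-j}\cdot))(y)=2^{jd}({\mathcal F}^{-1}\chi)(2^{j}y)$. Your explicit remarks on the need for $y^\a{\mathcal F}^{-1}\chi\in L^1$ for the lower-order terms and on the borderline case $\theta\in\N$ are if anything slightly more careful than the paper's treatment.
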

In the statement of Lemma \ref{lem:composition}, we used the convention that the sum over an empty set is equal to zero. This simple composition result is used in the proof of Lemma \ref{lem:out1}.

\begin{proof} For any $v \in L^2:$ 
$$ [\chi(D_x), f] v = \int_{\R^3} \big({\mathcal F}^{-1} \chi\big)(x - x') (f(x') - f(x)) v(x') \ dx',$$
and with a Taylor expansion of $f \in H^{s-1},$ with $s-1 > d/2:$ 
\be \label{compo:in:proof} [\chi(D_x), f] = \sum_{1 \leq \a \leq [s-1 - d/2]-1} \frac{(-i)^{|\a|}}{|\a|!} (\d_x^\a f)(x) (\d_\xi^\a \chi)(D_x) + R_{[s-1-d/2]}(\chi,f).\ee
The remainder is explicitly%
\be \label{remainder:in:proof} \begin{aligned} R_{[\theta]}(\chi,f) v  = \int_{\R^3} & \int_0^1 \frac{(-i(1 - t))^{[\theta]-1}}{([\theta]-1)!} \\ & \quad \cdot \big({\mathcal F}^{-1} \chi\big)(x - x') \sum_{|\a| = [\theta]} \d_x^{\a} f(x + t (x' - x)) \cdot (x' - x)^{(\a)} v(x') \, dx' \, dt'.\end{aligned}\ee
In \eqref{remainder:in:proof}, given $\a = (\a_1,\dots,\a_d)\in \N^d,$ given $y = (y_1, \dots, y_d) \in \R^d,$ we used notation
$$ \d_x^{\a} f(x + t y) \cdot y^{(\a)} = \sum_{1 \leq k \leq d} y_1^{\a_1} \cdots y_d^{\a_d} \d_{x_1}^{\a_1} \cdots \d_{x_d}^{\a_d} f(x + t y).$$ 
By the Sobolev embedding 
 \be \label{embed:in:proof} H^{s-1} \hookrightarrow C^{[\theta], \theta - [\theta]}, \qquad \theta = s - 1 - d/2,\ee
 given $f \in H^{s-1}$ and $|\a| = [\theta],$ the map $\d_x^{\a} f$ belong to $C^{0,\theta - [\theta]}.$ Thus we may write 
 $$ \d_x^{\a} f(x + t (x' - x)) = \d_x^\a f(x) +  f_\a(x,t(x'-x)) |x'-x|^{\theta - [\theta]},$$
 where $f_\a$ is bounded in both its arguments. This implies
\be \label{remainder:in:proof:2}
\begin{aligned} R_{[\theta]}(\chi,f) v = \frac{(-i)^{[\theta]}}{[\theta]!} \sum_{|\a| = m} (\d_x^\a f)(x) (\d_\xi^\a \chi)(D_x) v + \tilde R_{\theta}(\chi,f) v,
\end{aligned}
\ee
with notation 
$$ \begin{aligned} \tilde R_{\theta}(\chi,f) v = \int_{\R^3} \int_0^1 \frac{(-i(1 - t))^{m-1}}{(m-1)!} \big({\mathcal F}^{-1} \chi\big)(x - x') \tilde f(x, t(x' - x)) |x' - x|^{\theta} v(x') \, dx' \, dt',\end{aligned}$$
where $\tilde f$ is bounded in both its arguments, with
$$ \| \tilde f \|_{L^\infty} \leq \sup_{\begin{smallmatrix} |\a| = [\theta] \\ x \neq y \end{smallmatrix} } \frac{ |\d_x^\a f(x)  - \d_x^\a f(y)|}{|x - y|^{\theta - [\theta]}} =: \sup_{|\a| = [\theta]} \| \d_x^{\a} f \|_{0,\theta - [\theta]}.$$
  In particular, given $\chi$ compactly supported and $f \in H^{s-1},$ we find
$$ \| \tilde R_{\theta}(\chi,f) v \|_{L^2} \lesssim \| |\cdot|^{\theta} {\mathcal F}^{-1}\chi\|_{L^1} \sup_{|\a| = [\theta]} \| \d_x^{\a} f \|_{0,\theta - [\theta]} \| v \|_{L^2},$$
where the upper bound is finite by \eqref{embed:in:proof}.

In semiclassical quantization, that is with $\chi(2^{-j} D_x)$ in place of $\chi(D_x),$ it suffices to observe that
$$ \big\| | \cdot |^{\theta} {\mathcal F}^{-1} (\chi(2^{-j} \cdot)) \big\|_{L^1} = 2^{-j\theta} \big\| | \cdot |^{\theta} {\mathcal F}^{-1} (\cdot)) \big\|_{L^1}.$$
 \end{proof}

\subsection{G\r{a}rding's inequality}  \label{sec:garding} 

For $d_\star := 3 + d,$ for all $Q \in C^{d_\star} S^0,$ 
if $\Re e \, Q \geq 0,$ then
\be \label{garding0}
 \Re e \, (\pdo_j(Q) u, u)_{L^2} + 2^{-j} \| Q \|_{0, d_\star,d_\star} \| u \|_{L^2}^2 \geq 0.
\ee
Inequality \eqref{garding0} is G\r{a}rding's classical inequality in a semiclassical setting, as it can be found for instance in Theorem 4.32 in \cite{Zworski}\footnote{Theorem 4.32 in \cite{Zworski} deals with operators in Weyl quantization. We can for instance use Lemma 4.1.5 in \cite{Lerner} in order to bound the difference between the operator in Weyl quantization and the operator in classical quantization. The difference involves spatial derivatives of the symbol, which in semiclassical quantization bring out a factor $2^{-j}.$}. The specific number of derivatives $d_\star = 3 + d$ needed in \eqref{garding0} is found in the proof of Theorem 1.1.26 in \cite{Lerner}. 

The G\r{a}rding inequality \eqref{garding0} extends to para-differential operators, via the analysis of Section \ref{sec:proof:lannes}. Indeed, we observe that $(H_j^{-1})^\star = H_j$ (the adjoint is in the $L^2$ sense), so that 
$$ \Re e \, (\op_j(Q) u, u )_{L^2} = \Re e \, ((\op(h_j Q) - \pdo(h_j Q)) H_j u, H_j u)_{L^2} + \Re e \, (\pdo_j(Q) u, u)_{L^2},$$
and, as stated in Remark \ref{rem:lannes}, we may use bounds \eqref{a2bis} and \eqref{aLF} in order to bound the action of $\op(h_j Q) - \pdo_j(Q)$ as an operator from $L^2$ to $L^2.$ 

Consider the case that $Q$ is compactly supported, jointly in $(x,\xi),$ and the estimates \eqref{a2bis} and \eqref{aLF} bounding the action of $\op_j(h_j Q) - \pdo_j(h_j Q).$ We can use the arguments from the proof of Proposition \ref{prop:lannes} to verify the bound  
$$ \langle \xi \rangle^{|\b|} \| (1 - \tilde \phi_0(D_x)) (h_j \d_\xi^\b Q) \|_{H^{d/2}} \lesssim 2^{-j(s - d)} \| \d_\xi^\b Q \|_{H^s},$$
for $s$ as large as allowed by the regularity of $Q.$ Since $Q$ is compactly supported in $x,$ the above is controlled by $2^{-j(s - d)} \| \d_\xi^\b \d_x^\a Q \|_{L^\infty},$ for $|\a| \leq [s] + 1.$ We see in the last lines of the proof of Proposition \ref{prop:lannes} that the low-frequency term $Q_{LF}$ contributes a similar term. Thus, for $Q$ compactly supported in $(x,\xi),$ we find 
$$ \| (\op_j(h_j Q) - \pdo_j(h_j Q)) v \|_{L^2} \lesssim 2^{-j (s - d)} \| Q \|_{0, [s] + 1, 2 ([d/2] + 1)} \| v \|_{L^2},$$
hence a paradifferential version of G\r{a}rding's inequality as follows:
\be \label{garding:para}
 \begin{aligned} \Re e \, (& \op_j(Q) u,  u )_{L^2} \\ & + \Big( 2^{-j} \| Q\|_{0,d_\star, d_\star} + 2^{-j(s - d)} \| Q\|_{0, [s] + 1, 2([d/2] + 1)} \Big) \| u \|_{L^2}^2 \geq 0, \\ & \mbox{for all $Q \in C^{\max(d_\star,[s] + 1)} S^0$ compactly supported in $(x,\xi),$ and all $u \in L^2,$}
\end{aligned}
 \ee     
with $s > d.$ 

\section{Rates of growth via G\r{a}rding for the flows of operators with symbols having H\"older spatial regularity} \label{sec:rates}

Given $\theta > 0$ and $T > 0,$ consider a continuous map $Q: t \in [0,T] \to Q(t) \in C^{[\theta],\theta - [\theta]} S^0$ (see the definition of classical classes of symbols with limited spatial regularity in Appendix \ref{sec:symb}) with support near $(x^0, \xi^0),$ for some $(x^0, \xi^0) \in \R^d\times \S^{d-1},$ in the following sense:
\be \label{Q:support}
 \mbox{supp}\, Q(t) \subset \big\{ (x,\xi) \in \R^d \times \R^d, \quad |x - x^0| + |\xi - \xi^0| \leq R \}, \quad \mbox{for some $0 < R < 1,$}
\ee
uniformly in $t.$ The family of symmetric matrices $\Re e \, Q(t,\cdot)$ is continuous and compactly supported. We denote $\g_+$ an upper spectral bound:%
\be \label{rate:upper}
\Re e \, \,  Q(t) \leq \g_+(t), \quad \mbox{for some $t \to \g_+(t) \in \R,$ for all $(x,\xi),$}
\ee
and $\g_-(t)$ a spectral lower bound in restriction to a smaller ball near $(x^0,\xi^0):$ %
\be \label{rate:lower}
 \begin{aligned} \g_-(t) \leq \Re e \, Q(t), \quad & \mbox{for some $\g_- \in \R,$} \\ & \mbox{for all $(x,\xi)$ such that $|x - x^0| + |\xi - \xi^0| \leq r,$ for some $0 < r < R.$} \end{aligned}
\ee
In \eqref{rate:upper} and \eqref{rate:lower}, the matrix $\Re e \, Q$ is the symmetric matrix $(Q + Q^*)/2.$ We use notation $\psi,$ like in the main proof, to describe a smooth space-frequency cut-off such that \eqref{rate:lower} holds on the support of $\psi.$ %

Consider the initial-value problem
\be \label{ivp:Q} \left\{\begin{aligned}
 \d_t u = \op_j(Q) u, \\ u(0) = u_0 \in L^2.
\end{aligned}\right.\ee
 Since $Q(t)$ is order 0, the linear operator $\op_j(Q(t))$ maps $L^2$ to $L^2,$ hence \eqref{ivp:Q} has a unique global solution $u$ by the Cauchy-Lipschitz theorem. 

We can prove the following lower and upper rates of growth for the solution $u$ to \eqref{ivp:Q}:

\begin{prop} \label{lem:rates} For the solution $u$ to \eqref{ivp:Q}, for some $C > 0 $ which depends only on $Q$ and the dimension $d:$  
\begin{itemize}
\item We have the upper bound 
\be \label{est:rate:upper} \| u(t)\|_{L^2} \leq  \exp\Big( \int_0^t \big( \g_+(t') + C 2^{-j \theta_\star} \big) \, dt' \, \Big) \| u_0\|_{L^2},
\ee
where $\theta_\star := \theta/(\theta + d_\star),$ with $d_\star$ is a number of derivatives of the symbols in G\r{a}rding's inequality \eqref{garding0}, $\theta > 0$ is the H\"older regularity of $Q,$ and $\g_+$ is introduced in \eqref{rate:upper}.
\item Let $\tilde \psi$ be a space-frequency cut-off such that the lower bound \eqref{rate:lower} holds over the support of $\tilde \psi.$ Then, given $u_0$ such that $\op_j(\tilde \psi) u_0 \neq 0,$ for $t$ such that
\be \label{bd:t}
0 \leq t \leq t_\star, \quad \mbox{with $t_\star$ defined by $\dsp{\int_0^{t_\star} (\g^+ - \g^-)(t') \, dt' = j (\theta_\star \ln 2 - \e)}$} 
\ee
for any $\e > 0,$ where $\theta_\star$ is introduced below \eqref{est:rate:upper} and the lower rate $\g_-$ is introduced in \eqref{rate:lower}, we have the lower bound, 
\be \label{est:rate:lower}
\frac{9}{10} \exp\Big(\int_0^t \big( \g_-(t') - C 2^{-j \theta_\star}) \, dt' \Big) \, \Big) \| \op_j(\tilde \psi) u_0\|_{L^2} \leq \|u(t)\|_{L^2},
\ee
for large enough $j,$ depending only on $Q,$ $d,$ and $\e.$  
\end{itemize}
\end{prop}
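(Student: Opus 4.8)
The plan is to prove Proposition~\ref{lem:rates} by reducing both the upper and lower rates to the semiclassical G\r{a}rding inequality \eqref{garding0}, applied to the shifted symbol $Q(t) - \g_+(t){\rm Id}$ (for the upper bound) and $\g_-(t){\rm Id} - Q(t)$ (for the lower bound). The only delicate point is that $Q$ is merely $C^{[\theta],\theta-[\theta]}$ in $x$, so G\r{a}rding is not available for $Q$ directly; it must be applied to a smooth regularization $Q^\e$, and the parameter $\e$ optimized against the error $\|\op_j(Q) - \op_j(Q^\e)\|_{L^2\to L^2} \lesssim \e^\theta$ from \eqref{prop:h2} versus the blow-up $\|Q^\e\|_{0,d_\star,d_\star} \lesssim \e^{-d_\star}$ from \eqref{prop:h4}, which in G\r{a}rding \eqref{garding0} comes with a $2^{-j}$ prefactor. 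Balancing $\e^\theta \simeq 2^{-j}\e^{-d_\star}$ gives $\e = 2^{-j/(\theta+d_\star)}$ and hence an error of size $2^{-j\theta_\star}$, $\theta_\star = \theta/(\theta+d_\star)$, which is precisely the term appearing in \eqref{est:rate:upper}.

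For the upper bound, first I would differentiate $\|u(t)\|_{L^2}^2$ along \eqref{ivp:Q}, obtaining $\tfrac12\d_t\|u\|_{L^2}^2 = \Re e\,(\op_j(Q)u,u)_{L^2}$. Writing $\op_j(Q) = \op_j(Q - \g_+(t){\rm Id}) + \g_+(t){\rm Id}$, I would apply the para-differential G\r{a}rding inequality to $-(Q - \g_+(t){\rm Id})$, whose symmetric part is $\geq 0$ by \eqref{rate:upper}: after regularizing as above and invoking the version of \eqref{garding0} for para-differential operators described in Section~\ref{sec:proof:lannes} and in the paragraph around \eqref{garding:para}, one gets $\Re e\,(\op_j(Q - \g_+(t){\rm Id})u,u)_{L^2} \lesssim 2^{-j\theta_\star}\|u\|_{L^2}^2$, with an implicit constant depending only on $Q$ and $d$. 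Hence $\tfrac12\d_t\|u\|_{L^2}^2 \leq (\g_+(t) + C2^{-j\theta_\star})\|u\|_{L^2}^2$, and Gr\"onwall gives \eqref{est:rate:upper}.

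For the lower bound the idea is symmetric but requires the microlocalization by $\tilde\psi$, since \eqref{rate:lower} only holds on a small ball. I would introduce $w := \op_j(\tilde\psi)u$, or rather estimate $\|\op_j(\tilde\psi^\flat)u\|_{L^2}$ from below; the cleaner route is to run the $L^2$ energy estimate backwards. Consider $\d_t\|u\|_{L^2}^2 = 2\Re e\,(\op_j(Q)u,u)_{L^2}$ and now write $\op_j(Q) = \op_j(Q - \g_-(t)\chi) + \dots$ where $\chi$ is a cut-off equal to $1$ near $(x^0,\xi^0)$ and supported where \eqref{rate:lower} holds, so that $\Re e\,(Q - \g_-(t)\chi) \geq 0$ on the relevant set; more carefully, one applies G\r{a}rding to the localized piece $\op_j(\tilde\psi^\sharp)^\star\big(\g_-(t){\rm Id} - Q\big)\op_j(\tilde\psi^\sharp)$, whose symbol has symmetric part $\geq 0$, obtaining a lower bound $\d_t\|\op_j(\tilde\psi^\sharp)u\|_{L^2}^2 \geq 2(\g_-(t) - C2^{-j\theta_\star})\|\op_j(\tilde\psi^\sharp)u\|_{L^2}^2$ modulo commutator terms. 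One then propagates the localization: since $\tilde\psi \prec \tilde\psi^\sharp$, the difference $\op_j(\tilde\psi)u - \op_j(\tilde\psi^\sharp)\op_j(\tilde\psi)u$ and the commutator $[\d_t,\op_j(\tilde\psi^\sharp)] = [\op_j(Q),\op_j(\tilde\psi^\sharp)]$ are $O(2^{-j\theta})$-smoothing, hence harmless on the time scale \eqref{bd:t}. Integrating the differential inequality backward from time $0$ and absorbing the $O(2^{-j\theta_\star})$ terms into the exponent gives, for $t \leq t_\star$, $\|u(t)\|_{L^2} \geq \|\op_j(\tilde\psi^\sharp)u(t)\|_{L^2} \gtrsim \exp\big(\int_0^t(\g_-(t') - C2^{-j\theta_\star})dt'\big)\|\op_j(\tilde\psi)u_0\|_{L^2}$; the definition of $t_\star$ in \eqref{bd:t} is exactly what guarantees that the accumulated error $\exp(Ct2^{-j\theta_\star})$ stays bounded, so the constant $9/10$ in \eqref{est:rate:lower} can be achieved for $j$ large.

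The main obstacle I anticipate is the bookkeeping of the low-regularity G\r{a}rding step together with the microlocalization in the lower bound: one must be careful that the regularization parameter $\e$, the smoothing order $\theta$, and the number of derivatives $d_\star$ combine so that \emph{both} the G\r{a}rding error and the commutator errors introduced by inserting $\tilde\psi^\sharp$ are controlled by the same power $2^{-j\theta_\star}$, and that the time $t_\star$ — defined so that $\int_0^{t_\star}(\g_+ - \g_-)(t')dt' = j(\theta_\star\ln 2 - \e)$ — is long enough to be useful yet short enough for these errors to remain negligible. A secondary technical point is justifying that $\op_j(\tilde\psi^\sharp)$ is close to a genuine (approximate) orthogonal projector on the range of $\op_j(\tilde\psi)$, so that $\|\op_j(\tilde\psi^\sharp)u_0\|_{L^2} \gtrsim \|\op_j(\tilde\psi)u_0\|_{L^2}$; this follows from the composition calculus \eqref{composition:para}--\eqref{remainder} and the relation $\tilde\psi \prec \tilde\psi^\sharp$, at the cost of yet another $O(2^{-j})$ remainder, but it needs to be stated explicitly.
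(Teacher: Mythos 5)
Your upper-bound argument is the paper's own: regularize $Q$ into $Q^\e$, pay $\e^\theta$ for the substitution and $2^{-j}\e^{-d_\star}$ (plus the para-differential correction) in G\r{a}rding, balance to $\e = 2^{-j/(\theta+d_\star)}$, and conclude by Gr\"onwall; this part is fine. The lower bound also follows the paper's strategy in outline (localize near $(x^0,\xi^0)$, apply G\r{a}rding to a localized nonnegative symbol, treat mislocalization and regularization as errors), but one step is mis-stated and one key mechanism is missing. On the first point: as written, your G\r{a}rding application has the wrong sign. The symbol of $\op_j(\tilde\psi^\sharp)^\star\big(\g_-{\rm Id}-Q\big)\op_j(\tilde\psi^\sharp)$ is, to leading order, $|\tilde\psi^\sharp|^2(\g_- - Q)$, whose symmetric part is $\leq 0$ on the support (there $\Re e\, Q \geq \g_-$), and the naive splitting $Q = (Q-\g_-\chi)+\g_-\chi$ is not globally nonnegative either, since $\Re e\,Q$ is unconstrained on the annulus between the $r$-ball of \eqref{rate:lower} and the $R$-ball of \eqref{Q:support} where $\chi$ vanishes. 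The object to which G\r{a}rding applies is $\tilde\psi\,(Q^\e-\g_-^\e)$, which is $\geq 0$ everywhere, inserted through the one-sided composition identity $\op_j(\tilde\psi^\flat)\op_j(Q^\e-\g_-^\e) = \op_j(\tilde\psi(Q^\e-\g_-^\e))\op_j(\tilde\psi^\flat) + O(2^{-j}\e^{-1})$; this is fixable, but it needs to be set up this way.

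The more serious gap is your justification of the time restriction, hence of the constant $9/10$. The dangerous errors in the equation for the localized quantity $w$ -- the composition remainder $2^{-j}\op_j(Q_1^\e)$, the regularization error $\op_j(Q-Q^\e)$, and your commutator $[\op_j(Q),\op_j(\tilde\psi^\sharp)]$ -- act on the \emph{full} solution $u$, not on $w$. They can only be bounded through the already-proved upper bound, so they grow like $\exp(\int_0^t\g_+)\,\|u_0\|_{L^2}$, whereas the quantity you are bounding from below grows only like $\exp(\int_0^t\g_-)\,\|\op_j(\tilde\psi)u_0\|_{L^2}$. After time integration the error relative to the main term is of size $2^{-j\theta_\star}\exp\big(\int_0^t(\g_+-\g_-)\big)$ (times the fixed ratio $\|u_0\|_{L^2}/\|\op_j(\tilde\psi)u_0\|_{L^2}$), and the definition \eqref{bd:t} of $t_\star$ is exactly what keeps this factor of size $e^{-j\e}$, hence below $1/10$ for $j$ large. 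Your stated reason -- that $t_\star$ keeps ``the accumulated error $\exp(Ct2^{-j\theta_\star})$'' bounded -- misses this entirely: that factor is harmless for any $t = O(j)$ and does not explain why $\g_+$ appears in the definition of $t_\star$ at all. Declaring the commutator terms ``harmless on the time scale \eqref{bd:t}'' is precisely the comparison that must be proved; supplying it (as the paper does, by feeding the upper bound for $v_-$ into the integrated differential inequality for $\|w\|_{L^2}$) is the missing step in your lower bound.
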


\begin{proof} We use notations and results from Section \ref{sec:reg}, as we regularize $Q$ into $Q^\e \in C^\infty S^0,$ by spatial convolution with a smoothing kernel. By \eqref{prop:h1}-\eqref{prop:h4}, we have 
 \be \label{QtildeQ} \| \op_j(Q - Q^\e) \|_{L^2 \to L^2} \lesssim \e^\theta, \qquad \mbox{and} \qquad  \| Q^\e \|_{0, k,k'} \lesssim \e^{-k}, 
 \ee 
 using notation \eqref{norm:symb}. 
 Via \eqref{QtildeQ}, the lower and upper bounds \eqref{rate:lower} and \eqref{rate:upper} for $Q$ translate into lower and upper bounds for $Q^\e.$ We let 
 $$ \g_+^\e = \g_+ + C \e^\theta, \qquad \g_-^\e := \g_- - C \e^\theta,$$
 for some $C > 0,$
 and
 $$v_\pm = \exp\Big( - \int_0^1 \g^\e_\pm(t') \, dt'\Big) u.$$
Then, $v_\pm$ solve
$$ \begin{aligned} \d_t v_- = \op_j( Q^\e -  \g_-^\e) v_- + \op_j(Q - Q^\e) v_- - C \e^\theta v_-, \\ \d_t v_+ + \op_j(\g_+^\e - Q^\e) v_+ = \op_j(Q - Q^\e) v_+ + C \e^\theta v_+.\end{aligned}$$
Since $\Re e \, (\g_+^\e - Q^\e ) \geq 0$ in the whole domain $\R^d_x \times \R^d_\xi,$ we may apply G\r{a}rding's inequality \eqref{garding0}, and obtain 
$$ \frac{1}{2} \d_t \| v_+ \|_{L^2}^2 \leq C( 2^{-j} \e^{- d_\star} + 2^{-j (s - d)} \e^{- ([s] + 1)} + \e^\theta) \|v_+\|_{L^2}^2,$$
for any $s$ and for some constant $C > 0,$ which depends neither on $j$ nor on $\e.$ We choose $\e$ and $s$ so that the errors above are the same order of magnitude:
\be \label{choice:epsilon}
 \e := 2^{- j/(\theta + d_\star)},
\ee 
and $s \geq (1 - 1/(\theta + d_\star))^{-1} (d + (\theta + 1)/(\theta + d_\star)).$ Then all three errors above have size $2^{-j \theta/(\theta + d_\star)},$ up to a multiplicative constant which does not depend on $j.$  
The upper bound \eqref{est:rate:upper} follows.

 The proof of the lower bound \eqref{est:rate:lower} is just a bit more delicate, since the posited lower bound \eqref{rate:lower} on $\Re e \, Q$ is valid only locally, on the support of $\tilde \psi.$ Let $\tilde \psi^\flat$ such that $\tilde \psi^\flat \prec \tilde \psi,$ and  
$$ w := \op_j(\tilde \psi^\flat) v_-.$$ Then, $w$ solves
$$ \d_t w = \op_j(\tilde \psi^\flat) \op_j(Q^\e - \g_-^\e) v_- + \op_j(\tilde \psi^\flat) \op_j(Q - Q^\e) v_-.$$
Using $\tilde \psi^\flat \prec \tilde \psi$ and \eqref{composition:para}, we find  
$$ \op_j(\tilde \psi^\flat) \op_j(Q^\e) = \op_j(\tilde \psi^\flat \tilde \psi) \op_j( Q^\e) = \op_j(\tilde \psi Q^\e) \op_j(\tilde \psi^\flat) + 2^{-j} \op_j(Q^\e_1),$$
where $Q^\e_1 \in S^0$ is bounded by \eqref{remainder}, in particular involves one spatial derivative of $Q^\e,$ so that, with the second bound in \eqref{QtildeQ}, 
$$ \| \op_j(Q^\e_1) \|_{L^2 \to L^2} \lesssim \e^{- 1}.$$
 uniformly in $j.$ Thus the equation in $w$ is  
$$ \d_t w = \op_j(\tilde \psi (Q^\e - \g_-^\e)) w + 2^{-j} \op_j(Q^\e_1) v_- + \op_j(\tilde \psi^\flat) \op_j(Q - Q^\e) v_- + C \e^\theta w.$$
We now apply \eqref{garding:para}: %
$$ \frac{1}{2} \d_t \| w \|_{L^2}^2 + 2^{- j \theta/(\theta + d_\star)} C \| w \|_{L^2}^2 \geq \Re e \, \Big( 2^{-j} \op_j(Q^\e_1) v_- + \op_j(\tilde \psi^\flat) \op_j(Q - Q^\e) v_-, w \Big)_{L^2}.$$  
The $2^{-j \theta/(\theta + d_\star)}$ prefactor in the G\r{a}rding error term is as in the upper bound, above. The upper bound \eqref{est:rate:upper} gives an upper bound for $v_-:$
$$ \| v_-(t) \|_{L^2} \leq \| u_0 \|_{L^2} \exp\Big( \int_0^t \big( \, (\g^\e_+ - \g^\e_-)(t') + C 2^{-j \theta_\star} \big) \, dt' \Big),$$
 hence %
$$ \begin{aligned} \d_t \| w \|_{L^2} & + C 2^{-j \theta_\star} \| w \|_{L^2} \\ & \geq  - C \Big( 2^{-j} \e^{- 1} + \e^\theta \Big) \exp\Big( \int_0^t \big( \, \g^\e_+ - \g^\e_- + C 2^{-j \theta_\star} \big) \, dt' \Big) \| u_0 \|_{L^2}.\end{aligned}$$ 
 The above lower bound implies 
$$ \begin{aligned} \| w(t)\|_{L^2} & \geq e^{- t C 2^{-j \theta_\star}} \| w_0\|_{L^2} \\ & - C \Big( 2^{-j} \e^{- 1} + \e^\theta \Big) \exp\Big( \int_0^t \big(  (\g^\e_+ - \g^\e_-)(t') + C 2^{-j \theta_\star} \big) \,  dt' \Big) \| u_0 \|_{L^2}.\end{aligned}$$
Recall the definition of $\e$ in terms of $j$ in \eqref{choice:epsilon}. We have $2^{-j} \e^{- 1} \leq 2^{- j \theta/(\theta + d_\star)}.$ Thus the above lower bound takes the form
   $$ \begin{aligned} \| w(t)\|_{L^2} & \geq e^{- t C 2^{-j \theta_\star}} \| w_0\|_{L^2} \\ & - C 2^{-j \theta_\star} \exp\Big( \int_0^t \big(  (\g^\e_+ - \g^\e_-)(t') + C 2^{-j \theta_\star} \big) \, dt' \Big) \| u_0 \|_{L^2}.\end{aligned}$$
By assumption on $u_0,$ the ratio $c_0 := \| w(0) \|_{L^2}/\| u_0 \|_{L^2}$ is positive, and we have
$$ \begin{aligned} \| w(t)\|_{L^2} & \geq e^{- t C 2^{-j \theta_\star}} \| w_0\|_{L^2}\Big(1  - \frac{C}{c_0} 2^{-j \theta_\star} \exp\Big(\int_0^t (\g^\e_+ - \g^\e_- + C 2^{-j \theta_\star} \,)\Big) .\end{aligned}$$
Thus the condition is 
$$  \frac{C}{c_0} 2^{-j \theta_\star} \exp\Big(\int_0^t ( \, \g^\e_+ - \g^\e_- + C 2^{-j \theta_\star} ) \, \Big) \leq \frac{1}{10}.$$ 
That is, for some $C > 0$ which depends only on $Q$ and the dimension $d,$ 
$$ C 2^{-j \theta_\star} \exp\Big( \int_0^t ( \g_+ - \g_- + C 2^{-j \theta_\star} )\, \Big) \| u_0\|_{ L^2} \leq \frac{1}{10} \|  \op_j(\psi^\flat) u_0\|_{L^2}.$$
If $t$ is not too large, in the sense of \eqref{bd:t}, and $j$ is large enough, then the above inequality holds, and this implies the lower bound \eqref{est:rate:lower}. 
\label{page:endofapp}
 \end{proof}  

{\footnotesize

}

\end{document}